% Revision after the referee's comments
\documentclass[12pt,leqno]{amsart}
\usepackage{a4,amssymb,amsthm,amscd,amsmath,verbatim,url,enumerate,color,cleveref}
%\usepackage[notref,notcite]{showkeys}
%%%%%%%%%%%%%%%%%%%%%%%%%%%%%%%%%%%%%%%%%%%%%%%%%%%%%%%%%%%%%%%%%%%%%%%%%%%%%%%%%%%%

\renewcommand{\ker}{\mathrm{Ker}}
\newcommand{\la}{\langle}
\newcommand{\ra}{\rangle}

\newcommand{\nat}{\mathbb{N}}
\newcommand{\zz}{\mathbb Z}

\newcommand{\mg}[1]{#1^{\times}}
\newcommand{\sq}[1]{#1^{\times 2}}
\newcommand{\scg}[1]{\mg{#1}/\sq{#1}}

\newcommand{\follows}[2]{$(#1\Rightarrow #2)$}
\DeclareMathOperator{\rad}{\mathsf{rad}}
\newcommand{\Nrd}{\mathsf{Nrd}}

\newcommand{\Trd}{\mathsf{Trd}}

\newcommand{\wt}[1]{\widetilde{{#1}}}

\newcommand{\lra}{\rightarrow}

\newcommand{\Ad}{{\mathsf{Ad}}}
\newcommand{\ad}{{\mathsf{ad}}}

\newcommand{\op}{{\mathsf{op}}}
\newcommand{\id}{\mathsf{id}}
\newcommand{\s}{\sigma}
\newcommand{\matr}[1]{\mathbb{M}_{#1}}

\newcommand{\can}{\mathsf{can}}

\newcommand{\Pf}{\mathfrak{P}}
\DeclareMathOperator{\car}{\mathsf{char}}
\DeclareMathOperator{\Symm}{\mathsf{Sym}}
\DeclareMathOperator{\Skew}{\mathsf{Skew}}
\DeclareMathOperator{\Symd}{\mathsf{Symd}}
\DeclareMathOperator{\Alt}{\mathsf{Alt}}

\DeclareMathOperator{\kap}{\mathsf{cap}}

\DeclareMathOperator{\Tr}{\mathsf{T}}

\DeclareMathOperator{\ind}{\mathsf{ind}}
\DeclareMathOperator{\coind}{\mathsf{coind}}
\DeclareMathOperator{\Int}{\mathsf{Int}}
\DeclareMathOperator{\End}{\mathsf{End}}
\DeclareMathOperator{\sw}{\mathsf{sw}}

\renewcommand{\dim}{\mathsf{dim}}
\renewcommand{\deg}{\mathsf{deg}}
\renewcommand{\exp}{\mathsf{exp}}
\DeclareMathOperator{\Z}{\mathsf{Z}}
\DeclareMathOperator{\C}{\mathsf{C}}
\DeclareMathOperator{\N}{\mathsf{N}}
\renewcommand{\ker}{\mathsf{Ker}}

\renewcommand{\leq}{\leqslant}

\newcommand{\vf}{\varphi}

\renewcommand{\log}{\mathsf{log}}

%%%%%%%%%%%%%%%%%%%%%%%%%%%%%%%%%%%%%%%%%%%%%%%%%%%%%%%%%%%%%%%%%%%%%%%%%%%%%%%%

\swapnumbers
\newtheorem{thm}{Theorem}
\numberwithin{thm}{section}
\numberwithin{equation}{thm}
\newtheorem*{thm*}{Theorem}
\newtheorem*{mt}{Main Theorem}
\newtheorem{prop}[thm]{Proposition}
\newtheorem{cor}[thm]{Corollary}

\newtheorem*{conj*}{Conjecture}
\newtheorem{lem}[thm]{Lemma}

\theoremstyle{definition}

\newtheorem{ex}[thm]{Example}
\newtheorem{exs}[thm]{Examples}
\newtheorem{rem}[thm]{Remark}
\newtheorem{rems}[thm]{Remarks}
\renewenvironment{proof}{\par\noindent {\em Proof: }}{\hfill$\Box$\medskip}
\theoremstyle{plain}
%%%%%%%%%%%%%%%%%%%%%%%%%%%%%%%%%%%%%%%%%%%%%%%%%%%%%%%%%%%%%%%%%%%%%%%%%%%%%%%%%%%%

\title[A discriminant Pfister form]{The discriminant Pfister form of
  an algebra with involution of capacity four} 
\date{7 September, 2024}
\author[K.J. Becher]{Karim Johannes Becher}
\author[N. Grenier-Boley]{Nicolas Grenier-Boley} 
\author[J.-P. Tignol]{Jean-Pierre Tignol}
\address{Universiteit Antwerpen, Departement Wiskunde,
  Middelheimlaan~1, B-2020 Antwerpen, Belgium} 
\email{karimjohannes.becher@uantwerpen.be}
\address{Normandie Univ, UNIROUEN, LDAR (EA 4434), 76000 Rouen,
  France,\linebreak Universit\'es de Paris, Artois, Cergy Pontoise,
  Paris-Est Cr\'eteil.} 
\email{nicolas.grenier-boley@univ-rouen.fr}
\address{UCLouvain, ICTEAM Institute, Avenue
  G.~Lema\^{\i}tre 4, Box~L4.05.01,
B-1348 Louvain-la-Neuve, Belgium.}
\email{jean-pierre.tignol@uclouvain.be}

\thanks{This work was supported by the FWO Odysseus Programme (project
  \emph{Explicit Methods in Quadratic Form Theory}), funded by the
  Fonds Wetenschappelijk Onderzoek -- Vlaanderen. The third author
  acknowledges support from the Fonds de la Recherche
  Scientifique--FNRS under CDR grants 1.5054.12F, J.0014.15,
  J.0149.17, J.0159.19. Work on this paper was initiated in~2010 while
the first and the third author were, respectively, Fellow and Senior
Fellow of the Zukunftskolleg, Universit\"at Konstanz, whose
hospitality is gratefully acknowledged.}

\begin{document}

%%%%%%%%%%%%%%%%%%%%%%%%%%%%%%%%%%
\begin{abstract}\noindent
  To an orthogonal or unitary involution on a central simple algebra
  of degree $4$, or to a symplectic involution on a central simple
  algebra of degree $8$, we associate a Pfister form that
  characterises the decomposability of the algebra with involution.
  In this way we obtain a unified approach to known decomposability
  criteria for several cases, and a new result for symplectic
  involutions on degree-$8$ algebras in characteristic $2$.

  \smallskip
  \noindent
  \emph{Classification (MSC 2010):} 11E04, 11E81, 12G05, 16H05, 16R50,
  16W10

  \smallskip
  \noindent
  \emph{Keywords:} Central simple algebra, involution,  decomposability, \'etale algebra, quadratic form, isotropy, hyperbolicity, discriminant, cohomological invariant, composition formula, characteristic two
\end{abstract}

\maketitle

%%%%%%%%%%%%%%%%%%%%%%%%%%
\section{Introduction} %%%
%%%%%%%%%%%%%%%%%%%%%%%%%%

The decomposability of central simple algebras with involution into
tensor products of quaternion algebras with involution has been in the
focus of much study, motivated notably by the analogy between
decomposable involutions and quadratic Pfister forms; see~\cite{BPQ},
\cite{B}, and the references in~\cite{THyd}. When the characteristic
is different from~$2$, vanishing of the first cohomological invariant
yields a necessary condition for decomposability, which is also
sufficient for algebras of low degree; see~\cite{THyd}.  The aim of
this article is to establish in arbitrary characteristic a
decomposability criterion for algebras with involution of low degree
in terms of a canonically associated quadratic Pfister form.
The condition on the degree depends on the type of the
  involution, and can be better expressed in terms of the capacity of
  the Jordan algebra of symmetric elements, which is the maximal
  dimension of an \'etale algebra consisting of symmetric elements
  (see Section~\ref{sec:cap} and~\cite[p.~389]{BGBT18a}). In
  a nutshell, our main result functorially associates to every algebra
  with involution of capacity four a quadratic Pfister form that
  detects when the algebra decomposes into a tensor product of
  quaternion algebras with involution.

The following statement spells out explicitly our main result in the various cases
 that we consider:

\begin{mt}
  Let $F$ be a field.  Let $n\in\{1,2,3\}$.  Let $A$ be an $F$-algebra
  with $\dim_FA=2^{n+3}$ and $\s$ an $F$-linear involution on $A$ such
  that the following holds, depending on $n$:
  \begin{enumerate}
  \item[$n=1:$] $\car F\neq 2$,
   $(A,\s)$ is a central simple $F$-algebra with orthogonal involution of
    degree $4$.
\item[$n=2:$]
  $(A,\s)$ is a central simple $F$-algebra with unitary
    involution of degree~$4$.
\item[$n=3:$]
  $(A,\s)$ is a central simple $F$-algebra with symplectic
    involution of degree~$8$.
  \end{enumerate}
  Then  to $(A,\s)$ a quadratic
  $n$-fold Pfister form $\Pf_\s$ over $F$  is associated which has the following
  characteristic property:\\[1mm] 
  For any field extension $F'/F$,
  the form $\Pf_\s$ is hyperbolic over
  $F'$ if and only if the $F'$-algebra with involution
  $(A,\s)_{F'}$ obtained from $(A,\s)$ by scalar extension to $F'$
  is totally decomposable.
 \end{mt}

Here, we call $(A,\s)$  \emph{totally decomposable} if
$
    (A,\s)\simeq (Q_1\otimes \cdots\otimes Q_r,\s_1\otimes\cdots\otimes \s_r)
$
where $r=\lceil\frac{n}{2}\rceil$ and where, for $1\leq i\leq r$, $Q_i$ is a quaternion algebra over the centre of $A$ and $\s_i$ an $F$-linear involution on $Q_i$.

The notion of a central simple $F$-algebra with unitary involution (in case $n=2$) is to be understood in the sense of~\cite[\S2.B]{BOI}: the centre is a quadratic \'etale $F$-algebra.

In most of the cases the Main Theorem gives a reinterpretation of some
previously known decomposability criteria in terms of quadratic
Pfister forms.  Here our principal aim is to handle these different
cases by a new uniform approach.  In the case where $n=3$ and $\car F=2$
the result is itself novel.

Note that when $n=1$ we assume that $\car F\neq 2$.  
In this case a criterion for decomposability was established by
Knus--Parimala--Sridharan \cite{KPS}.  In~\cite{KPS2} the same authors
proved an analogous statement in arbitrary characteristic.  
This criterion could also be formulated in terms of a bilinear $1$-fold
Pfister form (given by the determinant of the involution,
see~\cite[\S7.A]{BOI}). On the other hand, a corresponding criterion
for degree-$4$ algebras with quadratic pair can be obtained using the
exceptional isomorphism $\mathsf{D}_2=\mathsf{A}_1\times\mathsf{A}_1$;
see~\cite[(15.12)]{BOI}.  This criterion could be formulated in terms
of a quadratic $1$-fold Pfister form.

The case $n=1$ is the least difficult and interesting one, but at the
same time it would be the most cumbersome to cover if characteristic
$2$ were included, in view of the necessary distinction between
orthogonal involutions and quadratic pairs.  For this reason we
decided to include the case $n=1$ only when $\car F\neq 2$, mainly in
order to highlight the analogy with the other two cases.
Nevertheless, we will have cause to consider orthogonal
involutions in characteristic~$2$ in some auxiliary results.
 
In the case $n=2$, a criterion for decomposability was first obtained
by Karpenko--Qu\'eguiner~\cite{KQ}.  Their result is stated in terms
of the discriminant algebra, and it is obtained using the exceptional
isomorphism $\mathsf{A}_3=\mathsf{D}_3$.  The quadratic $2$-fold Pfister form
$\Pf_\sigma$ turns out to be the norm form of the $F$-quaternion
algebra that is Brauer equivalent to the discriminant algebra of
$(A,\sigma)$, see \Cref{P:cap4unitary-invarel}.  
For the case $n=3$ a criterion for decomposability was
established in characteristic different from~$2$ by
Garibaldi--Parimala--Tignol~\cite{GPT} in terms of a cohomology class
of degree $3$, which gives the first nontrivial cohomological
invariant of $(A,\sigma)$.  This class is given by the
Arason invariant of the $3$-fold Pfister form $\Pf_\sigma$,
see \Cref{P:GPT}. 

What unites the three cases in the Main Theorem, in spite of the different
dimensions of the algebra, is the fact that biquadratic \'etale
subalgebras on which $\s$ restricts to the identity are maximal for
this property, by~\cite[Theorem~4.1]{BGBT18a}.

The core of the proof of the Main Theorem is carried out in
\Cref{sec:biquad}.  The construction of the quadratic form
$\Pf_\sigma$ is inspired by the treatment in \cite{GPT} of the
symplectic case in characteristic different from two.  It further
relies on a peculiar property of certain biquadratic \'etale
subalgebras, which was first used by Rost--Serre--Tignol~\cite{RST} to
define a cohomological invariant of degree~$4$ for central simple
algebras of degree~$4$.

A priori the construction depends on the choice of a biquadratic
\'etale $F$-subalgebra $L$ of $A$ contained in the space
$\Symd(\s)=\{x+\s(x)\mid x\in A\}$. In
 \Cref{L:mults} we
show that such an $L$ 
induces a decomposition
 $$\Symd(\s)=L\oplus W_1\oplus W_2\oplus W_3$$ 
where each of the $F$-subspaces $W_1$, $W_2$, $W_3$ is naturally
endowed with a quadratic form (determined up to a similarity factor).
Moreover, we show that these three quadratic forms are related by a
composition formula.  Hence they are similar to a quadratic Pfister
form, which is determined by $\s$ and $L$ and which we denote by
$\Pf_{\sigma,L}$.  In \Cref{P:Pf-dec-equiv} we show that this form
$\Pf_{\sigma,L}$ is hyperbolic if and only if the algebra with
involution decomposes along~$L$ (this notion is introduced in \Cref{S:dec}).  
We then prove in \Cref{C:Pf-disc-def} that this Pfister form does not
depend on the choice of the biquadratic subalgebra $L$.  
Hence it is an invariant of $\s$, which we denote by $\Pf_\s$.
The fact that $\Pf_\s$ has the properties stipulated in the Main
Theorem is then established by \Cref{T:Pf-disc-hyp}. 
 
Showing the independence of the Pfister form $\Pf_\s$ from the choices
made in its construction 
is the most delicate part of the proof of the Main Theorem.
This is based on a reduction to the case where $\s$ is hyperbolic, and
hence it relies on a comprehensive study of the decomposability of
algebras with hyperbolic involution. 
This is carried out in some more generality in
\Cref{S:hyperquatfactor}, and then specialised in
\Cref{sec:cap4} to the situation of capacity~$4$.

Our treatment also leads to a new result on the possible
decompositions of a totally decomposable algebra with involution
$(A,\s)$ such as in the Main Theorem.  In
\Cref{C:dec-along-all-quad-neat} we obtain that any biquadratic
\'etale $F$-subalgebra of $A$ to which $\s$ restricts to the identity
can be distributed nicely over the quaternion factors of a certain
decomposition of $(A,\s)$.  In \Cref{C:kap4ttdec-splitfactor} we
further show that, when $A$ is simple but contains zero-divisors (or
when $A$ is unitary of inner type and the simple components contain
zero-divisors), then a total decomposition of $(A,\s)$ can be found
which contains a split $F$-quaternion algebra.

In the final \Cref{S:final} we relate the quadratic Pfister forms
arising in the three cases of the Main Theorem between each other and
to various previously known invariants. We further give some examples
where the Pfister form invariant of an algebra with involution is
computed explicitly.

\bigskip
{\bf Acknowledgements.} We are grateful to the referee for a large number of
  suggestions that allowed us to improve the presentation significantly. We would further like to thank Kader Bing\"ol for various comments.

%%%%%%%%%%%%%%%%%%%%%%%%%%%%%%%%%%%%%%
\section{Algebras with involution} %%%
%%%%%%%%%%%%%%%%%%%%%%%%%%%%%%%%%%%%%%

In this section we recall some basic facts and objects associated with
involutions on central simple algebras.  We recall the distinction of
involutions into two kinds and further into three different types. We
also include some notation from \cite{BGBT18a}.  Our main reference
for involutions is \cite{BOI}.
\medbreak

Let $F$ always be a field.  Let $A$ be an $F$-algebra.  We denote by
$\Z(A)$ the centre of $A$.  For an $F$-subalgebra $B$ of $A$ we denote
by $\C_A(B)$ the centraliser of $B$ in~$A$.  Assume now that
$\dim_F(A)<\infty$.  If $A$ is simple, then $\Z(A)$ is a field and $A$
is a central simple $\Z(A)$-algebra.  In this case we denote
respectively by $\deg A$, $\ind A$ and $\exp A$ the degree, the index
and the exponent of $A$, and we further set
$\coind A=\frac{\deg A}{\ind A}$, and we call $A$ \emph{split} if
$\ind A=1$.  A central simple $F$-algebra of degree $2$ is called an
\emph{$F$-quaternion algebra}.

By an \emph{$F$-involution on $A$} we mean an $F$-linear anti-automorphism $\sigma:A\to A$ such that $\sigma\circ\sigma=\id_A$.
Consider an $F$-involution $\s$ on $A$. We set
\begin{eqnarray*}
\Symm(\sigma) & = & \{x\in A\mid \sigma(x)=\phantom{-}x\}\,,\\
\Skew(\sigma) & = & \{x\in A\mid \sigma(x)=-x\}\,,\\
\Symd(\sigma) & = & \{x+\sigma(x)\mid x\in A\}\,.
\end{eqnarray*}
Using the $F$-linear map
$A\to A,x\mapsto x+\s(x)$ one obtains that $$\dim_FA=\dim_F\Skew(\s)+\dim_F\Symd(\s).$$

An \emph{$F$-algebra with involution} is a pair $(A,\s)$ of a
finite-dimensional $F$-algebra $A$ and an $F$-involution $\s$ on $A$
such that $F= \Z(A)\cap \Symm(\s)$ and such that $A$ has no nontrivial
two-sided ideals $I$ with $\s(I)=I$. 

For an \'etale extension $L/F$ we denote by $[L:F]$ the dimension of $L$ as an $F$-vector space.

In the sequel let $(A,\s)$ be an $F$-algebra with involution.
Then either $\Z(A)=F$ or
$\Z(A)$ is a quadratic \'etale extension of $F$ with nontrivial automorphism $\s|_{\Z(A)}$.
We say that the involution $\s$ -- or the algebra with involution $(A,\s)$ -- is
$$
\begin{array}{ccl}
\mbox{\emph{of the first kind}} & \mbox{if} & [\Z(A):F]=1,\\
\mbox{\emph{of the second kind}} & \mbox{if} & [\Z(A):F]=2.
\end{array}
$$
If $\Z(A)$ field, then $A$ is a central simple $\Z(A)$-algebra.
If $(A,\s)$ is of the second kind, then either $\Z(A)$ is a quadratic field extension of $F$ or $\Z(A)\simeq F\times F$.

Involutions on central simple algebras are further distinguished into
three types.
Involutions of the first kind are \emph{of orthogonal}
  (resp.~\emph{symplectic}) \emph{type} if after scalar extension to
a splitting field of the underlying algebra they are adjoint to a diagonalisable (in particular symmetric) bilinear form
(resp.~to an alternating bilinear form); see~\cite[\S2]{BOI}.
Involutions of the second kind are said to be \emph{of
  unitary type. Furthermore, if} $\Z(A)\simeq F\times F$, then we call $(A,\s)$ \emph{unitary of
  inner type}.  (The term is motivated by a corresponding notion for
algebraic groups.)  In this case we obtain that
$(A,\s)\simeq (A_0\times A_0^\op,\sw)$ for a central simple
$F$-algebra $A_0$, its opposite algebra $A_0^\op$ and the \emph{switch
  involution} $\sw\colon A_0\times A_0^\op\to A_0\times A_0^\op$, given by
$\sw(a_1,a_2)=(a_2,a_1)$ (see \cite[(2.14)]{BOI}).  In this situation
we set $\deg A=\deg A_0$, $\ind A=\ind A_0$ and $\coind A=\coind A_0$.

\begin{rems}
  $(1)$\, We have $\Symd(\s)\subseteq \Symm(\s)$ and this is an
  equality unless $\car F = 2$ and $(A,\s)$ is of the first kind. (See
  \cite[(2.17)]{BOI} for the case where $\car F =2$ and $\sigma$
  unitary.)

  $(2)$\, We have $1\notin\Symd(\s)$ if and only if $\s$ is orthogonal
  and $\car(F)= 2$.  Orthogonal involutions in characteristic two are
  peculiar.  When we need to exclude this case, we will simply assume that
  $1\in\Symd(\s)$.
\end{rems}

\begin{prop}\label{P:Syms-dim}
  Set $d=\deg A$, hence $\dim_F A=[\Z(A):F]\cdot d^2$.
If
$1\in\Symd(\s)$, then
$$\dim_F\Symd(\s)\,\,=\,\,
\begin{cases} 
\frac{d(d+1)}2 & \mbox{if $\s$ is orthogonal},\\
\,\,\,\,\, d^2 & \mbox{if $\s$ is unitary},\\
\frac{d(d-1)}2 & \mbox{if $\s$ is symplectic}.
\end{cases}
$$
If $1\notin\Symd(\s)$, then $\dim_F\Symd(\s)=\frac{d(d-1)}2$.
\end{prop}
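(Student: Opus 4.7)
\emph{Proof plan.} The first identity $\dim_FA=[\Z(A):F]\cdot d^2$ is immediate, since $A$ is central simple over $\Z(A)$ of degree $d$, giving $\dim_{\Z(A)}A=d^2$.

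For the formula for $\dim_F\Syms(\s)$, the plan is to reduce to a split standard model via scalar extension. The key observation is that for any field extension $F'/F$, the subspaces $\Symm$ and $\Symd$ commute with scalar extension (one verifies $\Symm(\s_{F'})=\Symm(\s)\otimes_FF'$ and $\Symd(\s_{F'})=\Symd(\s)\otimes_FF'$), and the condition $1\in\Symd(\s)$ is preserved. Hence $\dim_F\Syms(\s)=\dim_{F'}\Syms(\s_{F'})$, so one is free to extend scalars.

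Choose $F'$ as follows: if $\s$ is of the first kind, take $F'$ to be a splitting field of $A$, yielding $(A,\s)_{F'}\simeq(\End_{F'}(V),\Ad_b)$ for a nondegenerate symmetric or alternating bilinear form $b$ on an $F'$-vector space $V$ of dimension $d$; if $\s$ is of the second kind, extend so as to split both $\Z(A)$ and the underlying algebra, reaching $(A,\s)_{F'}\simeq(M_d(F')\times M_d(F')^\op,\sw)$. Then compute $\dim\Syms$ in each model: in the unitary model, $\Symm=\Symd=\{(x,x):x\in M_d(F')\}\ni 1$, of dimension $d^2$; for $\Ad_b$ with $b$ symmetric but not alternating, $\Symm$ is the space of $b$-self-adjoint endomorphisms, of dimension $d(d+1)/2$, and $\Syms=\Symm$ in both characteristics (in characteristic~$2$, one uses $1\notin\Symd$ because $b$ is not alternating); and for $\Ad_b$ with $b$ alternating, a direct computation on a symplectic basis shows $1\in\Symd$, so $\Syms=\Symd$, and combining the identity $\dim A=\dim\Skew(\s)+\dim\Symd(\s)$ with $\dim\Skew(\Ad_b)=d(d+1)/2$ (valid in both characteristics, using $\Skew=\Symm$ when $\car F'=2$) yields $\dim\Syms=d(d-1)/2$.

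The most delicate step is the symplectic case in characteristic~$2$, where $\Symm=\Skew$ and the naive symmetric-versus-skew decomposition collapses; one must work with $\Symd$ rather than $\Symm$, which is precisely the motivation for the uniform definition of $\Syms$.
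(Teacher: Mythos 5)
Your proof is correct, and it follows essentially the same route as the paper, which simply delegates the dimension counts to \cite[(2.6)]{BOI} and \cite[(2.17)]{BOI} (those references themselves carry out the reduction to a split standard model that you spell out). You supply the details the paper outsources, including the characteristic-$2$ subtlety: that $1\in\Symd(\Ad_b)$ precisely when $b$ is alternating, which is what makes $\Syms$ select $\Symm$ in the orthogonal case and $\Symd$ in the symplectic case, and your use of $\dim A=\dim\Skew+\dim\Symd$ together with $\dim\Skew=\dim\Symm$ in characteristic~$2$ to get $d(d-1)/2$ in the symplectic case is exactly the right device.
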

\begin{proof}
This follows from the definitions together with \cite[(2.6)]{BOI} or
\cite[(2.17)]{BOI}, depending on whether $\s$ is of the first or
second kind. 
\end{proof}

We recall the most basic examples of involutions.
\begin{exs}\label{E:can}\quad
\begin{enumerate}[$(1)$]
\item The identity map $\id_F$ is the unique orthogonal involution on
  $F$, viewed as a central simple $F$-algebra. 
\item Consider a quadratic \'etale extension $K/F$ and  let
  $\can_{K/F}$ denote the nontrivial $F$-automor\-phism of $K$. Then
  $(K,\can_{K/F})$ is an $F$-algebra with unitary involution. 
\item Let $Q$ be an $F$-quaternion algebra.
The unique symplectic involution on $Q$ is given by $x\mapsto
\Trd_{Q}(x)-x$, where $\Trd_Q\colon Q\to F$ denotes the reduced trace form of
$Q$. 
We denote this involution by $\can_Q$ and call it the \emph{canonical
  involution on $Q$}.
\end{enumerate}
\end{exs}
By an \emph{$F$-algebra with canonical involution} we mean an $F$-algebra with involution of one of the three types in \Cref{E:can}.

%%%%%%%%%%%%%%%%%%%%%%%%%%%%%%%%%%%%%%%%%%
\section{Capacity} %%%
%%%%%%%%%%%%%%%%%%%%%%%%%%%%%%%%%%%%%%%%%%
\label{sec:cap}

Let $(A,\s)$ be an $F$-algebra with involution.  Following
\cite[Section~5]{BGBT18a}, we call an $F$-subalgebra $L$ of $A$
\emph{neat in $(A,\s)$} or a \emph{neat subalgebra of $(A,\sigma)$} if
$L$ is an \'etale $F$-algebra contained in $\Symm(\sigma)$ and such
that $A$~is free as a left $L$-module and, for all primitive
idempotents $e$ of $L$, the $F$-algebras with involution
$(eAe,\sigma\rvert_{eAe})$ have the same degree and the same type;
this type coincides with the type of $\sigma$.
A subalgebra $B$ of $A$ is called \emph{$\s$-stable} if $\s(B)=B$.

\begin{rem}
  In this article we mostly
  avoid the case of orthogonal involutions
  in characteristic~$2$, which is the most troublesome in the study of
  stable subalgebras, as demonstrated in~\cite{BGBT18a}. In
  particular, in the cases which we consider the definition of neat
  subalgebra simplifies as follows: A commutative $F$-subalgebra $L$
  of $A$ is \emph{neat in} $(A,\sigma)$ if $L$ is \'etale, consists of
  $\sigma$-symmetric elements, and $A$ is free as a left (or right)
  $L$-module.  In particular it follows under these circumstances that, if $L$ is neat in
  $(A,\sigma)$ and $L$ is a free module over some subalgebra $L'$,
  then $L'$ is also neat in $(A,\sigma)$.
\end{rem}

Following \cite{BGBT18a}, we define
\[
\kap(A,\sigma)=\left\{\begin{array}{rl}
\deg A & \mbox{if $\sigma$ is orthogonal or unitary,}\\
\frac{1}{2}\deg A & \mbox{if $\sigma$ is symplectic,}
\end{array}\right.
\]
and we call this integer the \emph{capacity of $(A,\sigma)$}.
\medbreak

Algebras with involution of capacity~$1$ correspond to those in \Cref{E:can}:

\begin{prop}\label{P:can-inv}
The following are equivalent:
\begin{enumerate}[$(i)$]
\item $(A,\s)$ is an $F$-algebra with canonical involution.
\item $\Symd(\s)\subseteq F$.
\item $\kap(A,\s)=1$.
\end{enumerate}
\end{prop}
\begin{proof}
This follows from \Cref{P:Syms-dim}.
\end{proof}

By \cite[Theorem~4.1]{BGBT18a} we have  
\[
  \kap(A,\sigma) = \max\{ [L:F] \mid \text{$L$ \'etale $F$-algebra with
    $L\subseteq \Symm(\sigma)$}\}.
  \]
Furthermore, by \cite[Proposition~5.6]{BGBT18a}, every \'etale
$F$-subalgebra $L$ of $A$ contained in $\Symm(\s)$ and with
$[L:F]=\kap(A,\s)$ is neat in $(A,\s)$.
These commutative subalgebras of $A$ are the maximal ones contained in $\Symd(\s)$.

\begin{prop}
    \label{prop:maximal}
    Let $L$ be an \'etale $F$-subalgebra of $A$ contained in
    $\Symm(\sigma)$ with $[L:F]=\kap(A,\s)$. Then
    $\C_A(L)\cap\Symd(\s)\subseteq L$.
  \end{prop}

\begin{proof}
    By \cite[Proposition~5.6]{BGBT18a}, $L$ is neat in $(A,\s)$.
    Assume first that $\s$ is orthogonal.
    Then $[L:F]=\kap(A,\s)=\deg A$ and, by \cite[Proposition~2.3]{BGBT18a}, we have $\dim_FA=[\C_A(L):F]\cdot[L:F]$. Hence $[\C_A(L):F]=[L:F]$, whereby $\C_A(L)=L$.

  Assume next that $\s$ is unitary. Upon extending scalars we may assume that
    $(A,\s)$ is unitary of inner type. We can identify $(A,\s)$ with
    $(A_0\times A_0^\op,\sw)$ for some central simple $F$-algebra
    $A_0$. Then there is an \'etale $F$-subalgebra $L_0\subseteq A_0$
    such that
    \[
      L=\{(\ell,\,\ell)\mid\ell\in L_0\}.
    \]
    Since $[L:F]=\kap(A,\sigma)$, we have $[L_0:F]=\deg
    A_0$. 
    It follows by~\cite[Proposition~2.3]{BGBT18a} that $\C_{A_0}(L_0)=L_0$,
    hence $L=\C_A(L)\cap\Symd(\s)$.

Assume finally that $\s$ is symplectic. Then $[L:F]=\kap(A,\s)=\frac12\deg A$, and it follows that
    $[\C_A(L):F]=4[L:F]$. 
    By~\cite[Proposition~5.4]{BGBT18a}, all the simple components of
    $\C_A(L)$ have the same degree, and by~\cite[Proposition~3.1]{BGBT18a}, the restriction of $\s$ to $eAe$ is symplectic for every
    nonzero symmetric idempotent $e\in A$.
    Extending scalars, we may
    assume $L$ is split. 
    Let $e_1, \ldots, e_r$ be the distinct primitive
    idempotents of $L$. So $L=e_1F\oplus\cdots\oplus e_rF$ and
    $r=[L:F]=\frac12\deg A$. Then
    \[
      \C_A(L)=(e_1Ae_1)\oplus\cdots\oplus(e_rAe_r)
    \]
    where $e_iAe_i$ is an $e_iF$-quaternion algebra for $1\leq i\leq r$ on which 
     $\s$ restricts to the canonical involution. 
     Consider
    $x\in\C_A(L)\cap\Symd(\s)$, and write $x=y+\s(y)$ for some $y\in
    A$. Since $e_1+\cdots+e_r=1$ and $x$ commutes with $e_1,\dots,e_r$, we
    have
    \[
      x=e_1x+\cdots+e_rx = e_1xe_1+\cdots+e_rxe_r.
    \]
    Now, for $1\leq i\leq r$, we have
    \[
      e_ixe_i=e_iye_i+\s(e_iye_i)\in\Symd(\s\rvert_{e_iAe_i}),
    \]
    and further $\Symd(\s\rvert_{e_iAe_i})=e_iF$, because $\s\rvert_{e_iAe_i}$
    is the canonical involution. Therefore $x\in L$.
    This shows that $\C_A(L)\cap\Symd(\s)\subseteq L$.
  \end{proof}

The following proposition will be needed in Section~\ref{S:dec}
  to prove that \'etale subalgebras along which an algebra with
  involution decomposes are neat.

\begin{prop}\label{P:neat-factor}
Let $L$ be an \'etale $F$-subalgebra of $A$ contained in
$\Symm(\s)$. Assume that there exists a $\s$-stable central simple
$F$-subalgebra $B$ of $A$ such that $L\subseteq B$ and $\C_B(L)=L$. 
Then $L$ is neat in $(A,\s)$.
\end{prop}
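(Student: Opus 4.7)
The plan is to verify the defining conditions of a neat subalgebra of $(A,\sigma)$ for $L$. Being étale and contained in $\Symm(\sigma)$ is given, so the core tasks are to show that $A$ is free as a left $L$-module and, where relevant, that the corner algebras $(eAe,\sigma|_{eAe})$ for primitive idempotents $e$ of $L$ share a common degree and type matching that of $\sigma$.

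The first step is to argue that $L$ is already neat in $(B,\sigma|_B)$. From $\C_B(L)=L$ and the centralizer formula in the central simple $F$-algebra $B$ one gets $\dim_F L \cdot \dim_F\C_B(L)=\dim_F B$, whence $[L:F]^2=\dim_F B$ and $[L:F]=\deg B$. Since $L$ is étale and $L\subseteq\Symm(\sigma|_B)$, the bound $[L:F]\leq\kap(B,\sigma|_B)$ from \cite[Theorem~4.1]{BGBT18a} rules out $\sigma|_B$ being symplectic, because then $\kap(B,\sigma|_B)=\tfrac{1}{2}\deg B<[L:F]$. Hence $\kap(B,\sigma|_B)=\deg B=[L:F]$, and the already cited \cite[Proposition~5.6]{BGBT18a}, applied to $(B,\sigma|_B)$, yields that $L$ is neat in $(B,\sigma|_B)$. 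In particular, $B$ is free as a left $L$-module.

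The second step transfers neatness from $B$ to $A$. Since $B$ is a central simple $F$-subalgebra of $A$, the double centralizer theorem yields an $F$-algebra isomorphism $A\cong B\otimes_F C$ with $C:=\C_A(B)$. The $\sigma$-stability of $B$ forces that of $C$, and because every $b\in B$ commutes with every $\sigma(c)\in \sigma(C)=C$, the computation $\sigma(bc)=\sigma(c)\sigma(b)=\sigma(b)\sigma(c)$ identifies $\sigma$ with $\sigma|_B\otimes\sigma|_C$ under this isomorphism. Thus $A$ is free as a left $B$-module of rank $\dim_F C$, and combining with the previous step, $A$ is free as a left $L$-module. Moreover, for each primitive idempotent $e\in L\subseteq B$ the corner decomposes as $(eAe,\sigma|_{eAe})\cong(eBe,\sigma|_{eBe})\otimes_F(C,\sigma|_C)$, so the common degrees and types of the corners $(eBe,\sigma|_{eBe})$ (from neatness of $L$ in $B$) are preserved after tensoring with the fixed factor $(C,\sigma|_C)$, and the standard tensor-product rule for types of involutions identifies the resulting common type with the type of $\sigma$ on $A$.

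The main obstacle is ensuring the decomposition $A\cong B\otimes_F C$ is available also in the unitary inner case, where $A\simeq A_0\times A_0^{\op}$ is semisimple but not simple. In that case $\Z(B)=F$ embeds diagonally into $\Z(A)\simeq F\times F$; the multiplication map $B\otimes_F C\to A$ is nevertheless well-defined (since $B$ and $C$ commute) and injective (since $B\otimes_F C$ is simple, $B$ being central simple over $F$), so a dimension count completes the isomorphism. The compatibility of $\sigma$ with this decomposition is verified exactly as in the simple case.
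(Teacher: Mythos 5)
Your proof is correct, but it takes a genuinely different route from the paper's. Both arguments begin identically: from $\C_B(L)=L$ and the double centralizer theorem one deduces $[L:F]=\deg B$, so $\sigma|_B$ is orthogonal and $[L:F]=\kap(B,\sigma|_B)$, whence $L$ is a maximal \'etale subalgebra of $(B,\sigma|_B)$. At this point the two proofs diverge. The paper stays \emph{inside the ambient algebra $A$}: it writes $(A,\s)\simeq(B,\s_B)\otimes(C,\s_C)$ with $C=\C_A(B)$, picks a maximal \'etale $M\subseteq\Symm(\s_C)$, observes that $LM$ is \'etale in $\Symm(\s)$ with $[LM:F]=\kap(A,\s)$ so that $LM$ is neat by \cite[Proposition~5.6]{BGBT18a}, and then descends to $L$ using the ``freeness descent'' criterion \cite[Lemma~5.8]{BGBT18a}. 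You instead establish that $L$ is neat \emph{in the smaller algebra $B$} (again via Proposition~5.6), and then transfer neatness from $B$ to $A$ by unpacking the definition: freeness of $A$ over $L$ follows from $A\cong B\otimes C$ plus freeness of $B$ over $L$, and the degree/type conditions on corners follow from the factorization $eAe\cong eBe\otimes C$ and the tensor-product rule for types. The paper's route is slicker because it never has to verify the raw definition of neatness directly, leaning entirely on the two black-box criteria; your route is more hands-on, and in exchange you must explicitly handle the case where $A$ is not simple (unitary of inner type), which you do. One small imprecision in that last step: you justify injectivity of $B\otimes_F C\to A$ by saying $B\otimes_F C$ is simple ``$B$ being central simple over $F$,'' but simplicity of $B\otimes_F C$ also requires $C$ to be simple, which fails precisely in the inner-type case. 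The conclusion is still right --- the kernel, being a two-sided ideal of $B\otimes_F C$, has the form $B\otimes J$ for some ideal $J\subseteq C$, and since $1\otimes C$ maps injectively into $A$ one gets $J=0$ --- but the stated reason is not quite the correct one.
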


\begin{proof}
Let $C=\C_A(B)$, which is a simple $\s$-stable $F$-subalgebra of $A$.
We set $\s_B=\s|_B$ and $\s_C=\s|_C$. 
Then $(B,\s_B)$ and $(C,\s_C)$ are $F$-algebras with involution 
such that $(A,\s)=(B,\s_B)\otimes (C,\s_C)$.
As $\C_B(L)=L$, all simple components of $C_B(L)$ have degree $1$, so we obtain  by~\cite[Proposition~2.3]{BGBT18a} that
$[L:F]=\deg B$. Hence $\kap(B,\s_B)=\deg B$,
and it follows that $\s_B$ is orthogonal.
It follows by \cite[(2.23)]{BOI} that $\s_C$ is of the same type as
$\s$, and consequently 
$\kap(A,\s)=\kap (B,\s_B)\cdot \kap(C,\s_C)$.

We choose an \'etale $F$-subalgebra $M$ of $(C,\s_C)$
contained in
$\Symm(\s_C)$ with $[M:F]=\kap(C,\s_C)$. 
Then $LM$ is an \'etale extension of $F$ contained in $\Symm(\s)$
with
$$
[LM:F]=[L:F]\cdot [M:F]=\kap (B,\s_B)\cdot
\kap(C,\s_C)=\kap(A,\s)\,.
$$ 
Hence $LM$ is neat in $(A,\s)$, by
\cite[Proposition~5.6]{BGBT18a}. Since $LM$ is free as an $L$-module,
it follows by \cite[Lemma~5.8]{BGBT18a} that $L$ is neat in $(A,\s)$. 
\end{proof}

%%%%%%%%%%%%%%%%%%%%%%%%%%%%%%%%%%%%%%%%%%
\section{Decomposability} %%%
%%%%%%%%%%%%%%%%%%%%%%%%%%%%%%%%%%%%%%%%%%
\label{S:dec}

In this section we discuss decompositions of algebras with involution that are compatible with a given multiquadratic \'etale subalgebra. We further recall the notion of an algebra with involution being totally decomposable.
\smallskip

Unless explicitly mentioned otherwise, all tensor products of algebras and
vector spaces are taken over $F$ and simply denoted by $\otimes$. 
\smallskip

Let $(A,\s)$ be an $F$-algebra with involution.  We call $(A,\s)$
\emph{totally decomposable} if, for some $n\in\nat$, there exist
$F$-quaternion algebras
$Q_1,\dots,Q_n$ with respective involutions of the first kind $\s_1,\dots,\s_n$ such that
$$(A,\s)\simeq (\Z(A),\s|_{\Z(A)})\otimes\bigotimes_{i=1}^n (Q_i,\s_i)\,;$$
if $\Z(A)=F$, then this simply means that
$(A,\s)\simeq\bigotimes_{i=1}^n (Q_i,\s_i)$.
Recall
from~\cite[(2.22)]{BOI} that every quaternion algebra with unitary
involution is totally decomposable. Therefore, an $F$-algebra with
unitary involution $(A,\s)$ is totally decomposable if and only if it
has a decomposition
\[
  (A,\s) \simeq
  (H_1,\s_1)\otimes_{\Z(A)}\cdots\otimes_{\Z(A)}(H_n,\s_n)
\]
for some quaternion $\Z(A)$-algebras $H_1$, \ldots, $H_n$ with respective $F$-linear unitary
involutions $\s_1$, \ldots, $\s_n$.
The degree of any
totally decomposable $F$-algebra with involution is a power of $2$.

Let $r\in\nat$ and let $B_1,\dots,B_r$ be central simple
$F$-subalgebras of $A$. We call $B_1$, \ldots, $B_r$
\emph{independent} if the $F$-subalgebra of $A$ generated by
$B_1\cup\ldots\cup B_r$, is $F$-isomorphic to the tensor product
$B_1\otimes\cdots \otimes B_r$; this is equivalent to having that
$B_1$, \ldots, $B_r$ are centralizing one another, that is, for any
$i,j\in\{1,\dots,n\}$ with $i\neq j$, we have $xy=yx$ for all
$x\in B_i$ and $y\in B_j$; in this situation we denote the $F$-subalgebra of $A$ generated by $B_1\cup\ldots\cup B_r$  by $B_1\cdots B_r$.

\begin{prop}\label{P:cap2-ttdec}
Let $n\in\nat$ be such that $\kap(A,\s)=2^n$.
Then the following are equivalent:
\begin{enumerate}[$(i)$]
\item $(A,\s)$ is totally decomposable.
\item  There exist $n$ independent $\s$-stable $F$-quaternion subalgebras of $A$.
\item There exist independent $\s$-stable $F$-quaternion subalgebras
  $Q_1,\dots,Q_{n-1}$ of $A$ such that $\s|_{Q_i}$ is orthogonal for
  $1\leq i\leq n-1$. 
\item There exist independent $\s$-stable $F$-quaternion algebras
  $Q_1,\dots,Q_{n}$ of $A$ such that $\s|_{Q_i}$ is orthogonal for
  $1\leq i\leq n$. 
\end{enumerate}
\end{prop}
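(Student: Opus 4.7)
The plan is to prove the cycle $(iv)\Rightarrow(iii)\Rightarrow(ii)\Rightarrow(i)\Rightarrow(iv)$, with $(iv)\Rightarrow(iii)$ immediate. For $(ii)\Rightarrow(i)$, given $n$ independent $\s$-stable $F$-quaternion subalgebras $Q_1,\dots,Q_n$ of $A$, the subalgebra $B=Q_1\otimes\cdots\otimes Q_n$ is $\s$-stable and $F$-central simple of degree $2^n$, whence $(A,\s)\simeq(B,\s|_B)\otimes(C,\s|_C)$ with $C=\C_A(B)$. A dimension count using $\kap(A,\s)=2^n$ and the formula $\dim_FA=[\Z(A):F](\deg A)^2$ forces $C=F$ in the orthogonal case, $C=\Z(A)$ in the unitary case, and $C$ to be an $F$-quaternion in the symplectic case; in each situation $(C,\s|_C)$ is either a canonical involution algebra (Proposition~\ref{P:can-inv}) or a quaternion with involution, so $(A,\s)$ is totally decomposable.

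For $(i)\Rightarrow(iv)$, a total decomposition $(A,\s)\simeq(\Z(A),\s|_{\Z(A)})\otimes\bigotimes_{j=1}^m(Q_j,\s_j)$ has $m=n$ in the orthogonal or unitary cases and $m=n+1$ in the symplectic case, and the type of $\s$ fixes the parity of the number of symplectic $\s_j$ (even in the orthogonal case, unconstrained in the unitary case, odd in the symplectic case). I invoke the classical result from \cite{KPS,KPS2} that any tensor product of two canonical involutions on quaternion algebras is isomorphic to a tensor product of two orthogonal involutions, namely $(Q,\can_Q)\otimes(Q',\can_{Q'})\simeq(Q_0,\rho_0)\otimes(Q'_0,\rho'_0)$ with orthogonal $\rho_0,\rho'_0$. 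Iterating this exchange on pairs of symplectic factors eliminates them entirely in the orthogonal case, leaves a lone residual symplectic factor in the unitary case (which is then converted to orthogonal by absorbing $(\Z(A),\can)\otimes(Q,\can_Q)$ via the capacity-$2$ unitary descent lemma \cite[(2.22)]{BOI}), and leaves exactly one in the symplectic case. The $n$ remaining orthogonal factors furnish the $n$ independent orthogonal $\s$-stable $F$-quaternion subalgebras required by (iv).

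For $(iii)\Rightarrow(ii)$, the essential step is the base case $n=1$, where (iii) is vacuous and (ii) demands one $\s$-stable $F$-quaternion subalgebra of $A$ with $\kap(A,\s)=2$: in the orthogonal case $A$ itself works; in the unitary case one invokes the descent result \cite[(2.22)]{BOI}; in the symplectic case, one uses the classical fact that every symplectic involution on a degree-$4$ algebra decomposes as a tensor product of two quaternions. For $n\geq2$, set $B=Q_1\otimes\cdots\otimes Q_{n-1}$, a $\s$-stable $F$-central simple subalgebra of degree $2^{n-1}$ with $\s|_B$ orthogonal, and $C=\C_A(B)$. A dimension count yields $\kap(C,\s|_C)=2$ with $\s|_C$ of the same type as $\s$ (by the tensor rules on involution types). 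Applying the base case to $(C,\s|_C)$ produces a $\s$-stable $F$-quaternion subalgebra $Q_n\subseteq C$, independent from $Q_1,\dots,Q_{n-1}$ since $Q_n\subseteq\C_A(B)$ and $Q_n\cap B\subseteq\Z(B)=F$, so $Q_1,\dots,Q_n$ establish (ii).

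The main obstacle is the structural input in $(i)\Rightarrow(iv)$: the replacement of a pair of canonical (symplectic) involutions by a pair of orthogonal ones is a non-trivial consequence of the Knus--Parimala--Sridharan decomposability theorem and its characteristic-$2$ analog, without which the parity count would not suffice to enforce the orthogonality stipulation in (iv).
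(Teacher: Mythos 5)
Your proposal runs the equivalence cycle in the opposite direction from the paper (you go $(iv)\Rightarrow(iii)\Rightarrow(ii)\Rightarrow(i)\Rightarrow(iv)$, whereas the paper does $(i)\Rightarrow(ii)\Rightarrow(iii)\Rightarrow(iv)\Rightarrow(i)$), and the two approaches distribute the work very differently. Your $(ii)\Rightarrow(i)$ by dimension counting matches the paper's $(iv)\Rightarrow(i)$ in spirit. For the hard content, you concentrate everything in $(i)\Rightarrow(iv)$ via a parity-and-pairing argument on a full abstract total decomposition, while the paper avoids any parity count: it first extracts $n-1$ orthogonal factors one at a time by an induction using the rearrangement fact \cite[Proposition~5.5]{BD16} (that a tensor product of two quaternion algebras with involution can be rearranged so that at least one factor is orthogonal), and then produces the $n$-th orthogonal factor by passing to the capacity-$2$ centralizer and invoking \cite[Corollary~6.6]{BGBT18a}, which places a quadratic \'etale $K\subseteq\Symm(\s)$ inside a $\s$-stable quaternion; on that quaternion $\s$ fixes $K$ pointwise and hence is automatically orthogonal. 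Your route is cleaner in how it handles $(iii)\Rightarrow(ii)$ (capacity-$2$ total decomposability via \cite[(2.22)]{BOI} and \cite[(16.16)]{BOI}), and it uses type-specific classical facts where the paper uses the uniform \cite[Corollary~6.6]{BGBT18a}.

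There is, however, a genuine gap in your $(i)\Rightarrow(iv)$ for the unitary case. After pairing off symplectic factors you may be left with a single residual canonical factor $(Q,\can_Q)$, and you claim \cite[(2.22)]{BOI} converts $(\Z(A),\can)\otimes(Q,\can_Q)$ into $(\Z(A),\can)\otimes(Q',\rho')$ with $\rho'$ orthogonal. But \cite[(2.22)]{BOI} produces the descent $Q_0\subseteq Q\otimes\Z(A)$ on which $\s$ restricts to the \emph{canonical} (hence symplectic) involution, so it gives back exactly the kind of decomposition you are trying to eliminate; it does not by itself yield an orthogonal factor. The statement you need is still true — one can produce an orthogonal descent by choosing a quadratic \'etale $K\subseteq\Symm(\s)$ inside the centralizer and building a $\s$-stable quaternion around it, on which $\s$ is automatically orthogonal because it fixes $K$ — but that is precisely the mechanism the paper borrows from \cite[Corollary~6.6]{BGBT18a}, not something \cite[(2.22)]{BOI} furnishes. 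You should either cite that lemma, or add the explicit short construction of the orthogonal descent; otherwise the residual symplectic factor in the unitary case is not actually handled.
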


\begin{proof}
The implication $(i)\Rightarrow (ii)$ is trivial.

\follows{ii}{iii}
This implication follows by induction on $n$, starting with the
trivial cases where $n\leq 1$. 
For the induction step, it suffices to observe that a tensor product
of two $F$-quaternion algebras with
symplectic involutions can up to
isomorphism be rearranged to have an orthogonal involution on at least
one of the two factors.  See e.g.~\cite[Proposition 5.5]{BD16} for a
proof in arbitrary characteristic for this fact. 

\follows{iii}{iv} Assume that  $Q_1$, \ldots, $Q_{n-1}$ are independent $\s$-stable $F$-subalgebras of $A$ such that $\s|_{Q_i}$ is orthogonal for $1\leq i\leq n-1$.
Let $B=Q_1\cdots Q_{n-1}$ and $C=\C_A(B)$.
We set $\s_B=\s|_B$, $\s_C=\s|_C$ and $\s_i=\s|_{Q_i}$ for $1\leq i\leq n-1$.
Then $(C,\s_C)$ is an $F$-algebra with involution, and we have
$$(B,\s_B)\simeq \bigotimes_{i=1}^{n-1}(Q_i,\s_{i})\quad\mbox{ and }\quad (A,\s)\simeq (B,\s_B)\otimes (C,\s_C).$$
Since $\s_1$, \ldots, $\s_{n-1}$ are orthogonal, so is $\s_B$, by
\cite[(2.23)]{BOI}.  
Hence the types of the involutions $\s$ and $\s_C$ coincide, and we have that $\kap(B,\s_B)=2^{n-1}$ and $\kap(A,\s)=\kap(B,\s_B)\cdot \kap(C,\s_C)$.
Since $\kap(A,\s)=2^n=2\cdot \kap(B,\s_B)$, we obtain that $\kap(C,\s_C)=2$. 
Hence there exists a quadratic \'etale  extension $K$ of $F$ contained in $\Symm(\s_C)$, and
by \cite[Corollary 6.6]{BGBT18a}, $K$ is contained in a $\s$-stable
$F$-quaternion subalgebra $Q_n$ of $C$.
Then $Q_1$, \ldots, $Q_n$ are independent and $\s|_{Q_n}$ is
orthogonal. 

\follows{iv}{i}
Assume that  $Q_1$, \ldots, $Q_{n}$ are independent $\s$-stable
$F$-subalgebras of $A$ such that $\s|_{Q_i}$ is orthogonal for $1\leq
i\leq n$. 
We set $B=Q_1\cdots Q_{n}$, $C=\C_A(B)$, $\s_B=\s|_B$, $\s_C=\s|_C$ and $\s_i=\s|_{Q_i}$ for $1\leq i\leq n$.
Then $$(B,\s_B)\simeq \bigotimes_{i=1}^{n}(Q_i,\s_{i})\quad\mbox{ and }\quad(A,\s)\simeq (B,\s_B)\otimes (C,\s_C).$$
Since $\s_1$, \ldots, $\s_{n}$ are orthogonal, $\s_B$ is orthogonal as
well, by \cite[(2.23)]{BOI}.  
We conclude that $\kap(B,\s_B)=2^{n}=\kap(A,\s)$ and that $(C,\s_C)$ is an $F$-algebra with involution with $\kap(C,\s_C)=1$, thus an algebra with canonical involution, by \Cref{P:can-inv}.
If $\s$ is symplectic, then $C$ is an $F$-quaternion algebra, and otherwise $C=\Z(A)$.
Therefore $(A,\s)$ is totally decomposable.
\end{proof}

We retrieve the following well-known fact, which is trivial in the
orthogonal case, and which is contained in \cite[(2.22)]{BOI} in the
unitary case, and in \cite[(16.16)]{BOI} in the symplectic case.

\begin{cor}\label{C:kap2totdec}
If $\kap(A,\s)=2$, then $(A,\s)$ is totally decomposable.
\end{cor}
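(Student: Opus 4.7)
The plan is to derive this directly from Proposition~\ref{P:cap2-ttdec} applied with $n=1$, so that it suffices to exhibit a single $\s$-stable $F$-quaternion subalgebra of $A$ (condition $(ii)$ of that proposition).

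First, I would invoke the max-characterization of capacity recalled after its definition: since $\kap(A,\s)=2$, there exists an \'etale $F$-subalgebra $K\subseteq\Symm(\s)$ with $[K:F]=2$. Second, I would appeal to \cite[Corollary~6.6]{BGBT18a}, the embedding result already used in the proof of $(iii)\Rightarrow(iv)$ in Proposition~\ref{P:cap2-ttdec}, to find a $\s$-stable $F$-quaternion subalgebra $Q$ of $A$ containing $K$. Third, the singleton $\{Q\}$ then witnesses condition $(ii)$ of Proposition~\ref{P:cap2-ttdec} for $n=1$, and the equivalence $(i)\Leftrightarrow(ii)$ in that proposition yields that $(A,\s)$ is totally decomposable.

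The only point where one might worry is the usual caveat about orthogonal involutions in characteristic~$2$, which the authors are careful to exclude from the framework of neat subalgebras. This is not an actual obstacle here: if $\s$ is orthogonal, then $\kap(A,\s)=\deg A=2$ forces $A$ itself to be an $F$-quaternion algebra, and total decomposability (as a single tensor factor) is trivial. In the remaining cases ($\s$ unitary or symplectic) the cited embedding result applies unconditionally, so the strategy above goes through without any case distinction visible in the write-up beyond a brief acknowledgment of the trivial orthogonal case. I do not anticipate any substantive obstacle; the content of the corollary is entirely packaged into Proposition~\ref{P:cap2-ttdec} and the embedding lemma from~\cite{BGBT18a}.
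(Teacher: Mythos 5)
Your proof is correct, and it is essentially the alternative route the paper itself suggests in a parenthetical remark: obtain a single $\s$-stable $F$-quaternion subalgebra from \cite[Corollary~6.6]{BGBT18a} and feed it into the equivalence of \Cref{P:cap2-ttdec}. What you missed is that the paper's primary argument is strictly shorter: for $n=1$, condition~$(iii)$ of \Cref{P:cap2-ttdec} asks for an \emph{empty} family of quaternion subalgebras and so holds vacuously, whence $(iii)\Rightarrow(i)$ gives total decomposability with no construction at all. Your route through condition~$(ii)$ therefore does work that the vacuity of~$(iii)$ renders unnecessary. The aside about orthogonal involutions in characteristic~$2$ is harmless and correctly resolved (if $\s$ is orthogonal and $\kap(A,\s)=2$ then $A$ is itself a quaternion algebra), but it too evaporates once one observes that $(iii)$ is vacuous.
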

\begin{proof}
This is the implication $(iii\Rightarrow i)$ in \Cref{P:cap2-ttdec} for $n=1$. (Alternatively the statement is obtained directly from \cite[Corollary 6.6]{BGBT18a}, which was also used in the proof of $(iii\Rightarrow iv)$ in \Cref{P:cap2-ttdec}.)
\end{proof}

In order to investigate decomposability of $(A,\s)$, we first try to
establish the existence of a neat subalgebra and then
to obtain criteria for the decomposability of $(A,\s)$ along this
subextension -- in a sense to be defined. 

Let $L$ be an \'etale $F$-subalgebra of $A$ contained in $\Symm(\s)$. 
We say that $(A,\s)$ \emph{decomposes along $L$} or is \emph{decomposable along $L$} if we have $[L:F]=2^r$ for some $r\in\nat$ and there exist independent $\s$-stable $F$-quaternion subalgebras $Q_1,\dots,Q_r$ of $A$ such that $Q_i\cap L$ is a quadratic $F$-algebra for $1\leq i\leq r$.
Note that in this case $L$ is necessarily neat in $(A,\s)$, by \Cref{P:neat-factor}, and the involution $\s|_{Q_i}$ on $Q_i$ is orthogonal for $1\leq i\leq r$, because $L\cap Q_i$ is an \'etale $F$-algebra contained in $\Symm(\s)$ and  $[L\cap Q_i:F]=2=\deg Q_i$.

\begin{cor}\label{C:cap2-ttdec}
For $n\in\nat$ such that $\kap(A,\s)=2^n$,
the following are equivalent:
\begin{enumerate}[$(i)$]
\item $(A,\s)$ is totally decomposable.
\item  $(A,\s)$ decomposes along some neat $F$-subalgebra $L$ with $[L:F]=2^{n-1}$.
\item $(A,\s)$ decomposes along some neat $F$-subalgebra $M$ with $[M:F]=2^n$.
\end{enumerate}
\end{cor}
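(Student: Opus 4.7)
The plan is to prove the three equivalences using \Cref{P:cap2-ttdec} as a bridge: \follows{i}{iii} goes through its condition~$(iv)$, and \follows{ii}{i} goes through its condition~$(iii)$. The intermediate implication \follows{iii}{ii} is then a direct extraction of a smaller neat subalgebra from a larger one.

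For \follows{i}{iii}, I would first apply \Cref{P:cap2-ttdec} \follows{i}{iv} to obtain $n$ independent $\s$-stable $F$-quaternion subalgebras $Q_1,\dots,Q_n$ of $A$ with each $\s|_{Q_i}$ orthogonal. Each $(Q_i,\s|_{Q_i})$ has capacity~$2$, so the maximal-dimension characterisation of capacity recalled after its definition provides a quadratic \'etale $F$-subalgebra $K_i\subseteq\Symm(\s|_{Q_i})$. Independence of the $Q_i$ implies that the compositum $M=K_1\cdots K_n$ is \'etale of dimension $2^n$, contained in $\Symm(\s)$, with $Q_i\cap M=K_i$ quadratic for each $i$, witnessing that $(A,\s)$ decomposes along~$M$.

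For \follows{iii}{ii}, starting from a decomposition of $(A,\s)$ along some $M$ of dimension $2^n$ via $Q_1,\dots,Q_n$, I would take $L=(Q_1\cap M)\cdots(Q_{n-1}\cap M)$. Independence of the $Q_i$ makes this compositum isomorphic to the tensor product of the quadratic \'etale factors $Q_i\cap M$, so $L$ is \'etale of dimension $2^{n-1}$, lies in $\Symm(\s)$, and satisfies $Q_i\cap L=Q_i\cap M$ quadratic for $1\leq i\leq n-1$. Hence $(A,\s)$ decomposes along $L$ via $Q_1,\dots,Q_{n-1}$.

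For \follows{ii}{i}, given a decomposition of $(A,\s)$ along $L$ of dimension $2^{n-1}$ via $Q_1,\dots,Q_{n-1}$, I would verify that each $\s|_{Q_i}$ is orthogonal and then invoke \Cref{P:cap2-ttdec} \follows{iii}{i}. Orthogonality holds because $Q_i\cap L$ is a quadratic \'etale $F$-subalgebra lying in $\Symm(\s|_{Q_i})$, whereas the unique symplectic involution on an $F$-quaternion algebra has capacity~$1$ and therefore admits no \'etale $F$-subalgebra of dimension greater than~$1$ in its $\Symm$-space. The most delicate point of the proof is precisely this ruling out of a symplectic restriction, since in characteristic~$2$ the space $\Symm(\can_Q)$ is three-dimensional and yet contains no separable quadratic extension of~$F$, a fact ultimately encoded in the capacity characterisation applied uniformly to all characteristics.
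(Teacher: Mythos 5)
Your proof is correct and takes essentially the same route the paper intends: the paper's proof of \Cref{C:cap2-ttdec} is just ``immediate from \Cref{P:cap2-ttdec},'' and your argument supplies precisely the implicit translation, in both directions, between the proposition's characterisations in terms of orthogonal quaternion factors and the corollary's characterisations in terms of quadratic $\sigma$-stable subalgebras $Q_i\cap L$. The two observations you isolate---that a quaternion factor with orthogonal restricted involution carries a quadratic \'etale subalgebra in $\Symm$, and that a symplectic one cannot, both read off from the capacity characterisation of~\cite[Theorem~4.1]{BGBT18a}---are exactly what the paper treats as obvious.
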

\begin{proof}
This is immediate from \Cref{P:cap2-ttdec}.
\end{proof}

If a  neat biquadratic $F$-subalgebra $M$ of $(A,\s)$ along
which $(A,\s)$ 
decomposes is given as a tensor product of quadratic $F$-subalgebras,
one can also find a decomposition of $(A,\s)$ according to the
quadratic subalgebras.

\begin{prop}\label{P:arrange-decompalong}
Let $L$ be a neat biquadratic $F$-subalgebra of $(A,\s)$
along which $(A,\s)$ decomposes. Let $K_1$, $K_2$ 
be quadratic \'etale $F$-subalgebras of $A$ such that $L=K_1K_2$
and such that $L$ is free as a $K_i$-module for $i=1$, $2$. 
Then there exist independent $\s$-stable $F$-quaternion subalgebras
$Q_1$, $Q_2$ of $A$ such that $Q_i\cap L=K_i$ for $i=1$, $2$.
\end{prop}
\begin{proof}
By the hypothesis, we may assume that there exist two independent
$\s$-stable $F$-quaternion subalgebras $Q$ and $Q'$ of $A$ such that
$K=Q\cap L$ and $K'=Q'\cap L$ are quadratic $F$-subalgebras of $L$. 
Note that $K\neq K'$. 
Apart from $K_1$ and $K_2$, there exists precisely one further quadratic \'etale $F$-subalgebra $K_3$ of $A$ over which $L$ is free as a module. It follows that $K,K'\in\{K_1,K_2,K_3\}$. 
Up to switching the roles of $Q$ and $Q'$, we can therefore assume  without
loss of generality that $K=K_1$ and $K'\in \{K_2,K_3\}$. If
 $K'=K_2$, then we set $Q_1=Q$ and $Q_2=Q'$ and are done. 
Assume now that $K'=K_3$.
Note that the involutions $\s|_Q$ and $\s|_{Q'}$ are orthogonal.
We may take $j\in \mg{Q}\cap\Symm(\s)$ and $j'\in
\mg{Q'}\cap\Symm(\s)$ such that 
$\Int(j)|_{K}$ and $\Int(j')|_{K'}$ are the nontrivial
$F$-automorphisms of $K$ and $K'$, respectively. 
Set $j''=jj'$.  
It follows that $K_2=\{x\in L\mid j''x=xj''\}$ and further that
$Q_1=K_1\oplus j''K_1$ and $Q_2= K_2\oplus j'K_2$ are $\s$-stable
independent $F$-quaternion subalgebras of $A$ such that $Q\cdot
Q'=Q_1\cdot Q_2$.
\end{proof}

%%%%%%%%%%%%%%%%%%%%%%%%%%%%%%%%%%%%%%%%%%%%%%%%%%%%%%%%%%%
\section{Hyperbolic involutions and quaternion factors} %%%
%%%%%%%%%%%%%%%%%%%%%%%%%%%%%%%%%%%%%%%%%%%%%%%%%%%%%%%%%%%
\label{S:hyperquatfactor}

Let $(A,\s)$ be an $F$-algebra with involution.
We call the involution $\s$ \emph{isotropic} if $\s(x)x=0$ for some
$x\in A\setminus\{0\}$, and \emph{anisotropic} otherwise.  
We call $\s$  \emph{metabolic}  if there exists $e\in A$ such that
$e^2=e$, $\sigma(e)e=0$ and $\dim_FeA=\frac{1}{2}\dim_F A$.  
We call $\s$  \emph{hyperbolic} if there exists $e\in A$ such
that $e^2=e$, $\sigma(e)=1-e$.
Note that every metabolic involution is isotropic. 
Note further that every symplectic involution on a split algebra and
every unitary involution of inner type is hyperbolic.
We recollect from \cite[Lemma~A.3]{BFT} and \cite[Proposition
4.10]{Dol11} the following fact.

\begin{prop}\label{P:metahyp}
The involution $\s$ is hyperbolic if and only if $\s$ is metabolic and
$1\in\Symd(\s)$. 
\end{prop}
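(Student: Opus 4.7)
The forward direction is essentially immediate. If $e\in A$ satisfies $e^2=e$ and $\sigma(e)=1-e$, then $\sigma(e)e=(1-e)e=0$ and $1=e+\sigma(e)\in\Symd(\sigma)$. The remaining dimension equality $\dim_FeA=\frac{1}{2}\dim_FA$ follows from the $F$-linear bijection $\sigma:eA\to A(1-e)$ together with the equality $\dim_FeA=\dim_FAe$ valid for any idempotent in a semisimple algebra, and $A$ is semisimple here (being either central simple, or of the form $A_0\times A_0^{\op}$).

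For the converse, let $e\in A$ be a metabolic idempotent and fix $b\in A$ with $b+\sigma(b)=1$. The plan is to construct an idempotent $e'\in A$ satisfying $\sigma(e')=1-e'$. A first attempt is to seek $e'$ of the form $e+d$ with $d\in eA(1-e)$; any such $e'$ is automatically idempotent with $e'A=eA$, and the condition $\sigma(e')=1-e'$ reduces to $d+\sigma(d)=s$, where $s:=1-e-\sigma(e)$. Note that $s\in\Symd(\sigma)$, because $1\in\Symd(\sigma)$ by hypothesis and $e+\sigma(e)\in\Symd(\sigma)$ trivially, and $\Symd(\sigma)$ is closed under subtraction.

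The key technical step is to produce such a $d$. A Peirce decomposition of $s$ relative to $e$ shows $ese=0$ and $(1-e)se=0$, using $\sigma(e)e=0$, so $s$ lies in $eA(1-e)\oplus(1-e)A(1-e)$. Matching this decomposition against $d+\sigma(d)$ is delicate, because $\sigma(d)\in(1-\sigma(e))A\sigma(e)$ does not sit in a single Peirce block relative to $e$, and its $(2,2)$-component must absorb the $(2,2)$-component of $s$. In characteristic different from $2$ one may take $b=1/2$ and construct $d$ in closed form, averaging to solve the equation. The main obstacle is characteristic $2$, where the hypothesis $1\in\Symd(\sigma)$ is genuinely restrictive: for example, the transposition involution on $\matr{2}(F)$ in characteristic $2$ is metabolic, witnessed by the idempotent $\begin{sm}1 & 0 \\ 1 & 0\end{sm}$, yet it is not hyperbolic, precisely because $1\notin\Symd(\sigma)$ in that setting. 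In the general case one exploits $b$ structurally, using it to promote the Lagrangian $eA$ into one half of a Lagrangian decomposition of the hermitian module $(A,h)$ with $h(x,y)=\sigma(x)y$; the explicit construction of $e'$ valid uniformly in all characteristics, together with the verification of $e'^2=e'$ and $\sigma(e')=1-e'$, is the content of \cite[Lemma~A.3]{BFT} and \cite[Proposition 4.10]{Dol11}.
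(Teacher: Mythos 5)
Your proposal is correct and takes essentially the same approach as the paper: the forward direction is a direct check, and the converse is delegated to the same two references, with \cite[Proposition~4.10]{Dol11} covering characteristic different from two and \cite[Lemma~A.3]{BFT} covering characteristic two. Your extra scaffolding (the attempted correction $e'=e+d$ and the transpose counterexample in characteristic two) is sound and well-motivated, but it does not replace the citations, which is also how the paper's proof handles the converse.
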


\begin{proof}
Assume that $\s$ is hyperbolic. 
Fix $e\in A$ such that $e^2=e$ and $\s(e)=1-e$. Then
$1=e+\s(e)\in\Symd(\s)$ and $\s(e)e=0$. 
Furthermore $A=eA\oplus (1-e)A$  and $Ae=\s(Ae)=\s(e)A=(1-e)A$, whereby
$\dim_F eA = \dim_F Ae=\dim_F (1-e)A$ and $\dim_F eA=\frac 12 \dim_F A$. Hence $\s$ is
metabolic.

Assume now that $\s$ is metabolic and $1\in\Symd(\s)$. 
If $\car(F)\neq 2$, then it follows by \cite[Proposition 4.10]{Dol11}
that $\s$ is hyperbolic. 
If $\car(F)=2$, then the condition that $1\in\Symd(\s)$ says that $\s$
is not orthogonal, and it follows by \cite[Lemma~A.3]{BFT} that $\s$
is hyperbolic. 
\end{proof}

We are going to characterise the hyperbolicity of the involution $\s$
by the existence of certain $\s$-stable $F$-quaternion subalgebras. 
The following statement provides the basis to this approach.

\begin{prop}\label{P:kap2isometa}
If $\kap(A,\s)=2$, then $\s$ is either anisotropic or metabolic.
\end{prop}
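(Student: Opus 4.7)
The plan is to exploit total decomposability: since $\kap(A,\s)=2$, \Cref{C:kap2totdec} gives that $(A,\s)$ is totally decomposable, so $(A,\s)$ is (up to the centre) a single quaternion tensor factor. I would then reduce each of the three type cases to the classical dichotomy that a nondegenerate binary hermitian (or quadratic) form is either anisotropic or hyperbolic. The hyperbolic endpoint of this dichotomy will deliver a metabolic $\s$, in fact hyperbolic as soon as $1\in\Symd(\s)$ via \Cref{P:metahyp}.

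In the orthogonal case, $A$ is an $F$-quaternion algebra and $\car F\ne 2$: if $A$ is a division algebra then $\s$ is anisotropic by cancellation, and otherwise $A\simeq M_2(F)$ and $\s$ is adjoint to a nondegenerate binary symmetric bilinear form, to which the binary form dichotomy directly applies. In the unitary case, if $\Z(A)\simeq F\times F$ then $(A,\s)\simeq (A_0\times A_0^{\op},\sw)$ is automatically hyperbolic; otherwise $K=\Z(A)$ is a quadratic field extension and $A$ is a $K$-quaternion algebra, and by Morita equivalence $\s$ is adjoint either to a rank-$1$ hermitian form over a $K$-quaternion division algebra (always anisotropic) or to a binary hermitian form over $(K,\can_{K/F})$ (where the binary dichotomy again applies).

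The symplectic case, where $\deg A=4$, is the most involved. Since $\kap(A,\s)=2$, a quadratic étale $F$-subalgebra $K\subseteq\Symm(\s)$ exists, and by \cite[Corollary 6.6]{BGBT18a} it lies in a $\s$-stable $F$-quaternion subalgebra $Q$ of $A$. Setting $Q'=\C_A(Q)$, one has $(A,\s)=(Q,\s|_Q)\otimes(Q',\s|_{Q'})$, and a type/capacity count as in the proof of \Cref{P:cap2-ttdec} forces $\kap(Q',\s|_{Q'})=1$, so $\s|_{Q'}=\can_{Q'}$ by \Cref{P:can-inv}. The involution $\s$ is then adjoint to a rank-$1$ quaternionic hermitian form over $(Q',\can_{Q'})$, which is either anisotropic or hyperbolic, delivering the desired dichotomy. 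The main obstacle is ensuring uniformity in characteristic $2$, where the orthogonal/symplectic distinction on quaternion algebras collapses and the adjoint description must be handled carefully; there I would fall back on \Cref{P:metahyp}, constructing from any isotropic vector an idempotent $e\in A$ with $\s(e)e=0$ and $\dim_FeA=\tfrac12\dim_FA$ to certify metabolicity directly.
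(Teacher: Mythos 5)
Your strategy is far more elaborate than needed, and as written it contains a gap in the symplectic case. The claim that when $\deg A = 4$, $\s$ is adjoint to a rank-$1$ quaternionic hermitian form over $(Q',\can_{Q'})$ is not correct: the adjoint description realizes $\s$ on a module over the underlying division algebra $D$ of $A=Q\otimes Q'$, which is in the Brauer class of $Q\otimes Q'$ rather than that of $Q'$, and the rank of that module is $\deg A/\deg D$, which is never $1$ unless $A$ is itself division -- in which case anisotropy is automatic and no hermitian form dichotomy is needed. Even in the favourable case $\ind A = 2$ the form has rank $2$, and its division ring coincides with $Q'$ only when $Q$ is split. So this step would have to be repaired by a further case split on $\ind A$ together with an independent Witt-type dichotomy for rank-$2$ quaternionic hermitian forms (including its validity in characteristic $2$), rather than a rank-$1$ observation.

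More fundamentally, the "fallback" you mention at the end is in fact the entire proof, and it works uniformly for every type and characteristic with no appeal to total decomposability, Morita theory, or binary form classification. The decisive point you should isolate is this: $\kap(A,\s)=2$ forces $\deg A$ to be $2$ (orthogonal or unitary) or $4$ (symplectic), and once you discard the trivially hyperbolic cases (unitary of inner type, split symplectic), \emph{every} nontrivial right ideal of $A$ then has $F$-dimension exactly $\frac12\dim_FA$: a nontrivial ideal exists only when $A\cong\matr{2}(D)$ for the underlying division algebra $D$, and there the only possible proper ideal dimension is $\frac12\dim_FA$. So if $\s(x)x=0$ for some $x\neq0$, choose an idempotent generator $e$ of the nontrivial right ideal $xA$; then $\s(e)e=0$ and $\dim_FeA=\frac12\dim_FA$ come for free, i.e., $\s$ is metabolic. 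No invocation of \Cref{C:kap2totdec} or \Cref{P:metahyp} is needed, and the proof makes no use of the type of $\s$. I would replace your Morita case analysis by this direct dimension count.
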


\begin{proof}
If either $(A,\s)$ is unitary of inner type, or 
$A$ is split and $\s$ is symplectic,
then  $\s$ is hyperbolic and hence metabolic. 
Assume now that we are in neither of these two cases and $\kap(A,\s)=2$. 
Then every nontrivial right ideal of $A$ has $F$-dimension equal to $\frac{1}2\dim_FA$. 
Suppose that $\s$ is isotropic. Fix $x\in A\setminus\{0\}$ such that 
$\s(x)x=0$. Then $xA=eA$ for some $e\in A$ with $e^2=e$, and we obtain that
$\s(e)e=0$. Furthermore $eA$
is a nontrivial right ideal of $A$, whereby $\dim_FeA=\frac{1}2\dim_FA$. 
Hence $\s$ is metabolic.
\end{proof}

The following statement is a variation of \cite[Theorem 2.2]{BST},
without restriction on the characteristic. 

\begin{prop}\label{L:meta-quat-factor}
Assume that $\kap(A,\s)$ is even and $1\in\Symd(\s)$.
Then the following are equivalent:
\begin{enumerate}[$(i)$]
\item
$\s$ is hyperbolic and $\coind A$ is even.
\item There exists a split $\s$-stable $F$-quaternion subalgebra $Q\subseteq A$ such that $\s|_Q$ is orthogonal and metabolic.
\item There exist elements
$u,v\in A$ such that 
$u^2=u$, $v^2=1$, $uv+vu=v$, $\s(u)=1-u+uv$ and $\s(v)=-1+2u+v-uv$.
\end{enumerate}
\end{prop}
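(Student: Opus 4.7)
The plan is to establish the cycle $(iii) \Rightarrow (ii) \Rightarrow (i) \Rightarrow (iii)$. The first two implications are structural and direct; the last is the substantive content and adapts \cite[Theorem~2.2]{BST} to arbitrary characteristic.

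For $(iii) \Rightarrow (ii)$, I would set $Q := F \cdot 1 + Fu + Fv + Fuv$. The relation $uv + vu = v$ yields $vu = v - uv$, whence $vuv = 1 - u$ and $uvu = 0$, so $Q$ is closed under multiplication. The assignment $u \mapsto E_{11}$, $v \mapsto E_{12} + E_{21}$ extends to a surjection $Q \twoheadrightarrow \matr{2}(F)$, which must be an isomorphism by simplicity of $\matr{2}(F)$. Thus $Q$ is a split $F$-quaternion subalgebra of $A$, and the formulas for $\sigma(u), \sigma(v)$ make it $\sigma$-stable. Taking $e := u$, one has $\sigma(u) u = (1 - u + uv) u = 0$ and $\dim_F uQ = \dim_F(Fu + Fuv) = 2$, so $\sigma|_Q$ is metabolic. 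To exclude symplecticity, observe that $u + \sigma(u) = 1 + uv \in \Symd(\sigma|_Q)$; a short calculation shows $uv \notin F$, so $\Symd(\sigma|_Q) \not\subseteq F$, which is incompatible with $\sigma|_Q$ being the canonical symplectic involution on $Q$. Hence $\sigma|_Q$ is orthogonal.

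For $(ii) \Rightarrow (i)$, let $C := \C_A(Q)$, so $(A,\sigma) \simeq (Q,\sigma|_Q) \otimes (C,\sigma|_C)$. Tensoring a metabolic idempotent of $Q$ with $1_C$ produces a metabolic idempotent of $A$, so $\sigma$ is metabolic; combined with the hypothesis $1 \in \Symd(\sigma)$, \Cref{P:metahyp} gives that $\sigma$ is hyperbolic. Since $Q$ is split of degree $2$, we have $\ind A = \ind C$ and $\coind A = 2\,\coind C$, which is even.

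For $(i) \Rightarrow (iii)$, the main technical step: fix $e_0 \in A$ with $e_0^2 = e_0$ and $\sigma(e_0) = 1 - e_0$. The corners $B_+ := e_0 A e_0$ and $B_- := (1-e_0) A (1-e_0)$ are central simple $F$-algebras of coindex $\coind A$, exchanged by $\sigma$, while the bimodules $e_0 A (1-e_0)$ and $(1-e_0) A e_0$ are paired by multiplication and by $\sigma$. The parity of $\coind A$ is precisely the condition ensuring the existence of a rank-one idempotent $f \in B_+$ together with $w \in e_0 A (1-e_0)$ satisfying $w\sigma(w) = f$. One then defines $u$ and $v$ as specific $F$-linear combinations of $e_0$, $f$, $w$ and $\sigma(w)$ so that the six relations of (iii) are satisfied by direct computation. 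The main obstacle is to carry out this construction uniformly in characteristic $2$: in the absence of $\tfrac12$ one cannot symmetrise or average, and one must instead exploit the parity of $\coind A$ directly to produce a rank-one (rather than full-rank) idempotent $f$ compatibly with $\sigma$, which is where the characteristic-free strengthening over \cite[Theorem~2.2]{BST} enters.
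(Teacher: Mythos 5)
Your proofs of $(iii)\Rightarrow(ii)$ and $(ii)\Rightarrow(i)$ are correct and essentially follow the paper's route; the small variation in how you rule out the symplectic case in $(iii)\Rightarrow(ii)$ (by showing $\Symd(\s|_Q)\not\subseteq F$ rather than observing directly that $\s|_Q\neq\can_Q$ because $\s(u)\neq 1-u$) is fine.

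The gap is in $(i)\Rightarrow(iii)$, and it is a substantive one. Your sketch fixes a hyperbolicity idempotent $e_0$, introduces the Peirce corners $e_0Ae_0$, $(1-e_0)A(1-e_0)$ and the off-diagonal bimodules, and asserts that evenness of $\coind A$ yields an idempotent $f$ and an element $w$ with $w\s(w)=f$ from which $u,v$ can be assembled. But you neither justify the existence of $f$ and $w$, nor write down the formulas for $u$ and $v$, nor explain why the six relations would hold; you explicitly flag the characteristic-two obstacle without resolving it. Moreover, the hypothesis that $\kap(A,\s)$ is even never appears in your argument, yet it is genuinely needed: it is what guarantees (in the symplectic case, for instance, where $\kap=\frac12\deg A$) that a degree-$2$ factor can be split off while leaving an involution of the same type on the complement. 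The paper sidesteps the entire intrinsic construction you are attempting by a classification argument. It writes $A\simeq\matr{2}(F)\otimes B$ (using $\coind A$ even), chooses an involution $\tau$ on $B$ of the same type as $\s$ (using $\kap(A,\s)$ even so that such a $\tau$ exists), checks by a direct matrix computation that the explicit $\s'=\Int(u+v)\circ t$ on $\matr{2}(F)$ with $u=\begin{sm}0&0\\0&1\end{sm}$, $v=\begin{sm}0&1\\1&0\end{sm}$ satisfies the relations and is metabolic, deduces via \Cref{P:metahyp} that $\s'\otimes\tau$ is hyperbolic, and then invokes \cite[(12.35)]{BOI} --- that hyperbolic involutions of a fixed type on a fixed algebra are unique up to isomorphism --- to conclude $(A,\s)\simeq(\matr{2}(F),\s')\otimes(B,\tau)$. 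That uniqueness result is the missing key lemma in your argument; once it is available, the explicit corner-by-corner construction you are struggling with becomes unnecessary.
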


Note that, if $\s$ is hyperbolic and $A$ is simple, then $\coind A$ is necessarily even, so the second condition
in~(\emph{i}) is relevant only in the case where $(A,\s)$ is unitary of inner type.
Also, each of the
properties~(\emph{i}), (\emph{ii}), (\emph{iii}) implies that
$\deg A$ is even. Hence, the condition that $\kap(A,\s)$ is even
is needed only when $\s$ is symplectic.
\medskip

\begin{proof}
\follows{iii}{ii}
Assume that $u$, $v\in A$ satisfy the relations in $(iii)$.
Then $Q=F\oplus uF\oplus vF\oplus uvF$ is a $\s$-stable $F$-quaternion
subalgebra of $A$. 
Note that $\s|_Q$ is not the canonical involution on $Q$, because
$u^2=u$ and $\s(u)\neq 1-u$. 
Hence $\s|_Q$ is orthogonal.
Since $$\s(u)u=(1-u+uv)u=uvu=u(v-uv)=(u-u^2)v=0\,,$$ we have that
$\s|_Q$ is isotropic. 
As $Q$ is an $F$-quaternion algebra, it follows by
\Cref{P:kap2isometa} that $\s|_Q$ is metabolic. 
Moreover, as $\s(u)u=0$, $Q$ is not a division algebra, and since it is a quaternion algebra, it is therefore split.

\follows{ii}{i}
Let $Q$ be a split $\s$-stable $F$-quaternion subalgebra of $A$ such that $\s|_Q$ is orthogonal and metabolic.
Let $C$ be the centralizer of $Q$ in $A$.
Then $C$ is $\s$-stable and 
$$(A,\s)\simeq (Q,\s|_Q)\otimes (C,\s|_C)\,.$$
Since $Q$ is split and $\s|_Q$ is metabolic, it follows that $\coind A$ is even and $\s$ is metabolic.
Hence $1\in\Symd(\s)$, and we obtain by \Cref{P:metahyp} that $\s$ is hyperbolic.

\follows{i}{iii}
Suppose that $(A,\s)$ is hyperbolic and $\coind A$ is even.
Since $\kap(A,\s)$ is even as well, there exists an $F$-algebra with involution $(B,\tau)$ of the same type as $(A,\s)$ and such that $A\simeq \matr{2}(F)\otimes B$.
The matrices 
$u=\left(\begin{smallmatrix} 0 & 0 \\ 0 & 1\end{smallmatrix}\right)$ and $v=\left(\begin{smallmatrix} 0 & 1 \\  1 & 0\end{smallmatrix}\right)$
in $\matr{2}(F)$ satisfy the relations in $(iii)$ with respect to the involution $\s'=\Int(u+v)\circ t$ on $\matr{2}(F)$, where $t$ denotes the transposition involution on $\matr{2}(F)$. 
As in the proof of \follows{iii}{ii} we obtain that $(\matr{2}(F),\s')$ is metabolic.
It follows that $(\matr{2}(F),\s')\otimes (B,\tau)$ is metabolic.
Since $\s'\otimes \tau$ is of the same type as $\s$, we have that $1\in\Symd(\s'\otimes \tau)$ and
conclude by \Cref{P:metahyp} that $(\matr{2}(F),\s')\otimes (B,\tau)$ is hyperbolic.
It follows from \cite[(12.35)]{BOI} that all algebras with hyperbolic involution of the same type and with the same underlying algebra are isomorphic.
Since $A\simeq \matr{2}(F)\otimes B$, we conclude that $(A,\s)\simeq (\matr{2}(F),\s')\otimes (B,\tau)$.
Hence $A$ contains elements $u$ and $v$ satisfying the equations in $(iii)$ with respect to $\s$.
\end{proof}

\begin{cor}\label{C:meta-quat-descent}
Let $K$ be a separable field extension of $F$ contained in $\Symm(\s)$.
Let $C=\C_A(K)$ and
assume that $\coind C$ and $\kap(C,\s|_C)$ are even and $\s|_C$ is hyperbolic.
Then there exists a $\s$-stable split $F$-quaternion subalgebra $Q$ of $A$ such that $\s|_Q$ is orthogonal and metabolic and such that $K\subseteq \C_A(Q)$.
\end{cor}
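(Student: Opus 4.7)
The plan is to invoke \Cref{L:meta-quat-factor} for the centraliser $(C,\s|_C)$, treated as a $K$-algebra with involution, in order to produce a pair $(u,v)$ of elements satisfying the relations in condition $(iii)$ of that proposition, and then to assemble the desired $F$-quaternion subalgebra from them.

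First I would verify that the hypotheses of \Cref{L:meta-quat-factor} hold for $(C,\s|_C)$ over $K$. Since $K$ is a field contained in $\Symm(\s)$, the restriction $\s|_C$ fixes $K$ pointwise, so $(C,\s|_C)$ is naturally a $K$-algebra with involution. By hypothesis $\kap(C,\s|_C)$ is even, $\coind C$ is even, and $\s|_C$ is hyperbolic; the remaining hypothesis $1\in\Symd(\s|_C)$ is automatic, since any idempotent $e\in C$ witnessing hyperbolicity yields $1=e+\s(e)\in\Symd(\s|_C)$. The implication $(i)\Rightarrow(iii)$ of \Cref{L:meta-quat-factor} then supplies elements $u,v\in C$ with $u^2=u$, $v^2=1$, $uv+vu=v$, $\s(u)=1-u+uv$ and $\s(v)=-1+2u+v-uv$.

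Next I would set $Q=F\oplus Fu\oplus Fv\oplus Fuv$, viewed as an $F$-subspace of $A$. The computation carried out in the proof of $(iii)\Rightarrow(ii)$ of \Cref{L:meta-quat-factor} uses only the multiplicative relations above and $F$-linear combinations, so it goes through verbatim: $Q$ is a four-dimensional $\s$-stable $F$-subalgebra of $A$, which is split (since it contains the non-trivial idempotent $u$), and $\s|_Q$ is orthogonal and metabolic. Finally, because $u,v\in C=\C_A(K)$, every element of $Q$ commutes with every element of $K$, which yields $K\subseteq\C_A(Q)$ and completes the argument.

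The main obstacle is conceptual rather than computational: one must reconcile the base field, since \Cref{L:meta-quat-factor} is stated for an $F$-algebra with involution whereas the natural structure on $(C,\s|_C)$ is as a $K$-algebra with involution. A brief justification is needed that the invariants $\kap$, $\coind$ and the notion of hyperbolicity have the same meaning in both contexts. Moreover one must verify that the quaternion algebra produced by the proposition, although constructed inside a $K$-algebra, is in fact an $F$-subalgebra of $A$; this is precisely where the explicit relations in $(iii)$ of \Cref{L:meta-quat-factor} become essential, as they allow one to take $F$-linear combinations of $\{1,u,v,uv\}$ rather than $K$-linear ones.
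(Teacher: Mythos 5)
Your proof is correct and follows essentially the same route as the paper: apply the implication $(i)\Rightarrow(iii)$ of \Cref{L:meta-quat-factor} to $(C,\s|_C)$ (viewed as a $K$-algebra with involution) to extract $u,v\in C$ satisfying the explicit relations, then take $Q=F\oplus Fu\oplus Fv\oplus Fuv$ and observe that $Q\subseteq C=\C_A(K)$ forces $K\subseteq\C_A(Q)$. Your remarks on why the $F$-span (rather than the $K$-span) still yields a $\sigma$-stable $F$-quaternion subalgebra, and on checking $1\in\Symd(\s|_C)$ from hyperbolicity, correctly fill in the points the paper leaves implicit.
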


\begin{proof}
Since $\s|_C$ is hyperbolic, we have $1\in\Symd(\s)$. 
By \Cref{L:meta-quat-factor}, there exist elements $u,v\in C$ such that 
$u^2=u$, $v^2=1$, $uv+vu=v$, $\s(u)=1-u+uv$ and $\s(v)=-1+2u+v-uv$.
Then $Q=F\oplus Fu\oplus Fv\oplus Fuv$ is a $\s$-stable $F$-quaternion subalgebra of $A$ such that $\s|_Q$ is orthogonal and metabolic and such that $K\subseteq \C_A(Q)$.
\end{proof}

Our last two results in this section provide conditions under which a
quadratic \'etale $F$-algebra $K$ contained in $\Symm(\s)$ can be
embedded into a split $\sigma$-stable $F$-quaternion subalgebra $Q$
such that $\s\rvert_Q$ is orthogonal and metabolic. We  consider
in \Cref{P:hyp-neat-in-quat-meta-factor} the case where $K$ is split,
and then in \Cref{thm:existmetabolic} the case where $K$ is a field.
In the following proof we interpret an involution as adjoint to a
hermitian form and use the correspondence between the concepts of
hyperbolicity for the two sorts of objects. 

\begin{prop}\label{P:hyp-neat-in-quat-meta-factor}
Let $K$ be a neat subalgebra of $(A,\s)$ such that $K\simeq F\times
F$. Assume that $\s$ is hyperbolic. 
Then $K$ is contained in a split $\s$-stable $F$-quaternion subalgebra
$Q$ of $A$ such that $\s|_Q$ is orthogonal and metabolic. 
\end{prop}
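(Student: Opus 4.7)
Since $K \simeq F \times F$, write $K = Fe_1 \oplus Fe_2$ with orthogonal $\s$-fixed idempotents $e_1, e_2$ summing to $1$ (both nonzero). The plan is to exhibit $Q$ in the form $F \oplus Fe_2 \oplus Fx \oplus Fy$, where $x \in e_1 A e_2$ is a suitable nonzero element satisfying $x\s(x) = -e_1$ and $y := -\s(x) \in e_2 A e_1$. Given such $x$, the relations $xy = -x\s(x) = e_1$, $yx = -\s(x)x = e_2$, $x^2 = y^2 = 0$, $e_2 x = 0$, $xe_2 = x$, $e_2 y = y$, $ye_2 = 0$ are immediate from the Peirce decomposition, and they identify $Q$ with $\matr{2}(F)$ via $e_1 \leftrightarrow E_{11}$, $e_2 \leftrightarrow E_{22}$, $x \leftrightarrow E_{12}$, $y \leftrightarrow E_{21}$. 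Hence $Q$ is a split $F$-quaternion subalgebra of $A$ containing $K$, and it is $\s$-stable since $\s(x) = -y$ and $\s(y) = -x$.

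\textbf{Existence of $x$.} Following the preamble, $x$ is produced via the hermitian-form interpretation of $(A,\s)$. Identify $(A,\s)$ with the adjoint algebra of a hermitian module $(V,h)$ over a division algebra with involution $(D,\tau)$ suited to the type of $\s$; in the unitary inner type case, $(D,\tau) = (F \times F, \sw)$ and $h$ reduces to a pairing of dual $F$-vector spaces. The idempotents $e_1, e_2$ correspond to an $h$-orthogonal decomposition $V = V_1 \oplus V_2$ with $V_i := e_iV$, so $h = h_1 \perp h_2$ for $h_i := h|_{V_i \times V_i}$. The neatness of $K$ forces $\dim_D V_1 = \dim_D V_2$, and the hyperbolicity of $\s$ translates to $h$ being hyperbolic, which gives $[h_1] = -[h_2]$ in the Witt group of $(D,\tau)$. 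Witt cancellation for hermitian modules then yields an isometry $\psi\colon (V_2, h_2) \to (V_1, -h_1)$; extending $\psi$ by $0$ on $V_1$, one obtains $x \in e_1 A e_2$ whose adjoint $\s(x) \in e_2 A e_1$ equals $-\psi^{-1}$ (extended by $0$ on $V_2$) by a direct computation using the adjointness $h(au,v) = h(u,\s(a)v)$. Hence $x\s(x) = -e_1$ as required.

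\textbf{Remaining verification and main obstacle.} It remains to check that $\s|_Q$ is orthogonal and metabolic. The fixed subspace $\Symm(\s|_Q)$ is spanned by $1$, $e_2$, and $x - y$ (equivalently $x + y$ in characteristic $2$), giving $\dim_F \Symm(\s|_Q) = 3 > \tfrac12 \dim_F Q$, so $\s|_Q$ is orthogonal by \Cref{P:Syms-dim}. For metabolicity, one produces an explicit idempotent $f \in Q$ with $\s(f)f = 0$ and $\dim_F fQ = 2$: take $f := \tfrac12(1 + x + y)$ when $\car F \neq 2$, and $f := e_2 + x$ when $\car F = 2$; in both cases a short computation verifies the desired properties. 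The main technical obstacle is the Witt cancellation step, which must be applied in the appropriate hermitian-form framework for each kind and type of $\s$; the troublesome case of orthogonal involutions in characteristic~$2$ is excluded from the outset since $\s$ hyperbolic implies $1 \in \Symd(\s)$ by \Cref{P:metahyp}, ensuring that Witt cancellation over the relevant $(D,\tau)$ is indeed available.
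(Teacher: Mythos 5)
Your proof is correct and follows essentially the same route as the paper's: interpret $\s$ as the adjoint of an even hermitian form, use neatness of $K$ to get an orthogonal decomposition of $V$ into equi\-dimensional pieces, use hyperbolicity together with Witt cancellation to produce an isometry between the two pieces, and assemble $Q$ from it (your $e_2$, $x$, $y$ generate the same quaternion as the paper's $e$ and $f = x+y$). The minor presentation differences — exhibiting an explicit metabolic idempotent rather than deducing metabolicity from isotropy via \Cref{P:kap2isometa}, a dimension count for orthogonality, and folding the unitary-inner-type case into the hermitian framework with $(D,\tau)=(F\times F,\sw)$ instead of treating it separately — do not change the substance of the argument.
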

\begin{proof}
We first assume that $A$ is simple.
 In this case we may identify $A$ with $\End_D(V)$ for a
 finite-dimensional $F$-division algebra $D$ and a finite-dimensional
 $D$-right vector space $V$. 
We fix an involution $\tau$ on $D$ of the same kind as $\s$.
By \Cref{P:metahyp}, since $\s$ is hyperbolic, $\s$ is not orthogonal
if $\car F= 2$. Note further that, if $\car F\neq 2$ or if $\tau$ is
unitary, then every hermitian or skew-hermitian form over $(D,\tau)$
is even, by \cite[Chap.~I, Lemma 6.6.1]{Knus}. 
Using the identification of $A$ with $\End_D(V)$, we therefore obtain
by \cite[(4.2)]{BOI} that the involution $\s$ is adjoint to a
nondegenerate even hermitian or skew-hermitian form $h\colon V\times
V\to D$ with respect to $\tau$. 

Let $e$ and $e'$ be the primitive idempotents in $K$. 
Hence $e'=1-e$ and  we have that $e$ and $e'$ are the two projections
given by a direct decomposition $V=W\oplus W'$ for two $D$-subspaces
$W$ and $W'$ of $V$. Since $e,e'\in K\subseteq \Symm(\s)$ we obtain
that $W$ and $W'$ are orthogonal to one another with respect to $h$. 
Hence, we have an orthogonal decomposition
$$(V,h)\simeq (W,h|_{W})\perp (W',h|_{W'})\,.$$
Since $K$ is neat in $(A,\s)$, we have that $(eAe,\s|_{eAe})$ and
$(e'Ae',\s|_{e'Ae'})$ have the same degree and the same type. 
Since these $F$-algebras with involution correspond to the
$D$-endomorphism algebras of $W$ and of $W'$ with their involutions
adjoint to the restrictions of $h$, we conclude that  
$\dim_DW=\dim_DW'=\frac{1}2\dim_DV$ and that the restrictions of $h$
to $W$ and $W'$ have the same type as $h$, hermitian or
skew-hermitian. 

Since $\s$ is hyperbolic, so is $(V,h)$. 
Since also $(W',-h|_{W'})\perp (W',h|_{W'})$ is hyperbolic and of the
same dimension as $(V,h)$, it follows by  
\cite[Proposition~7.7.3]{Scharlau} that $(V,h)\simeq (W',-h|_{W'})\perp (W',h|_{W'})$.
Since $h$ is even, so is $h|_{W'}$, and hence we may apply Witt Cancellation
\cite[Chap.~I, Proposition 6.4.5]{Knus} and conclude that
$$(W,h|_{W})\simeq -(W',h|_{W'})\,.$$
Let $g\colon W\to W'$ be an isometry between $h|_W$ and $-h|_{W'}$.
Let $f\colon V\to V$ be the $D$-automorphism of $V$ determined by 
$f(w+ w')=g^{-1}(w')+ g(w)$ for $w\in W$ and $w'\in W'$.
It follows that $\s(f)=-f$, $ef+fe=f$ and $f^2=\id_V$.
We conclude that $e$ and $f$ generate a split $F$-quaternion
subalgebra $Q$ which is $\s$-stable and such that $\s|_Q$ is
orthogonal.  
Furthermore, since $f\neq 1$ and $$\s(1-f)\cdot
(1-f)=(1+f)(1-f)=1-f^2=0,$$ we conclude that $\s|_Q$ is
isotropic. Since $Q$ is an $F$-quaternion algebra, it follows by
\Cref{P:kap2isometa} that $\s|_Q$ is metabolic. This concludes the
proof for the case where $A$ is simple. 

Suppose finally that $(A,\s)$ is unitary of inner type.
Hence, we may identify $(A,\s)$ with $(B\times B^\op,\sw)$ for some
central simple $F$-algebra $B$.  
We obtain that $K=\{(x,x)\mid x\in K_0\}$ for an $F$-algebra $K_0$
contained in $B$, isomorphic to $F\times F$ and such that $B$ is free
as a $K_0$-left module. 
Hence $\coind B$ is even.
We use a variation of the above argument, without involutions and hermitian forms.
We identify $B$ with $\End_D(V)$ for a finite-dimensional $F$-division
algebra $D$ and a finite-dimensional $D$-right vector space $V$. 
As in the previous case the two primitive idempotents of $K_0$ give
rise to a decomposition $V=W\oplus W'$ where the $D$-subspaces $W$ and
$W'$ of $V$ are of the same dimension and therefore isomorphic.  
We fix a $D$-isomorphism $g\colon W\to W'$ and then define a
$D$-automorphism $f$ of $V$ determined by letting 
$f(w+ w')=g^{-1}(w')+ g(w)$ for $w\in W$ and $w'\in W'$.
Let $e\colon V\to W$ be the projection on the first component for the
decomposition  $V=W\oplus W'$. 
Then $f$ and $e$ generate a split $F$-quaternion subalgebra $Q_0$ of $B$.
We fix an orthogonal metabolic involution $\tau$ on $Q_0$.
Then $Q=\{(x,\tau({x}))\mid x\in Q_0\}$ is a split $F$-quaternion
subalgebra of $A=B\times B^\op$ containing $K$ and stable under
$\s=\sw$. 
Furthermore, $(Q,\s|_Q)\simeq (Q_0,\tau)$, whereby $\s|_Q$ is
orthogonal and metabolic. 
\end{proof}

\begin{thm}
  \label{thm:existmetabolic}
 Let $K$ be a separable quadratic field extension of $F$  contained in
 $\Symm(\s)$ and let $C=\C_A(K)$. Assume that $\s$ is hyperbolic and
 that $\sigma|_C$ is anisotropic.  
Then $K$ is 
contained in a $\s$-stable split $F$-quaternion subalgebra $Q$ such
that $\s|_Q$ is orthogonal and metabolic.
\end{thm}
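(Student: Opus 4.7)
I would first realize $(A,\s) = (\End_D V, \ad_h)$ for a finite-dimensional $F$-division algebra $D$ with an involution $\tau$ of the same kind as $\s$, a finitely generated right $D$-module $V$, and a nondegenerate hermitian (or skew-hermitian) form $h\colon V\times V\to D$ whose adjoint is $\s$; the unitary-of-inner-type case is handled by the reduction used in the proof of \Cref{P:hyp-neat-in-quat-meta-factor}. Since $K\subseteq \Symm(\s)$, the field $K$ acts $D$-linearly on $V$ and satisfies $h(kv,w) = h(v,kw)$, so $V$ is a right $(D\otimes_F K)$-module and $h$ lifts via a Jacobson-type transfer to a hermitian form $\tilde h\colon V\times V\to D\otimes_F K$ over $(D\otimes_F K, \tau \otimes \id_K)$ whose adjoint on $C = \End_{D\otimes K}V$ is $\s|_C$. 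Hence the hypothesis that $\s|_C$ is anisotropic translates into $\tilde h$ being anisotropic as a $(D \otimes_F K)$-form.

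Next, since $\s$ is hyperbolic, so is $h$; I fix a maximal totally $h$-isotropic $D$-submodule $W \subseteq V$ and a generator $\alpha$ of $K$ over $F$. The intersection $W \cap \alpha W$ is $\alpha$-stable (hence $K$-stable) and totally $h$-isotropic; regarded as a $(D\otimes_F K)$-submodule it is totally $\tilde h$-isotropic, so anisotropy of $\tilde h$ forces $W \cap \alpha W = 0$. By dimension $V = W \oplus \alpha W$, and the subspace $\alpha W$ is itself totally $h$-isotropic (using $\alpha^2 \in F$ in characteristic $\neq 2$, with a small Artin--Schreier adaptation in characteristic $2$). So $V = W \oplus \alpha W$ is a hyperbolic decomposition in which $K$ permutes the two Lagrangians.

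The quaternion $Q$ is then read off from this decomposition. Let $e\in A$ be the projection onto $W$ along $\alpha W$; then $e^2 = e$, $\s(e) = 1-e$, and $\alpha e\alpha^{-1} = 1-e$ by construction. In characteristic $\neq 2$, I set $j := \alpha(1-2e)$; a direct computation using $\alpha e = (1-e)\alpha$ gives $j\alpha = -\alpha j$, $\s(j) = j$, and $j^2 = -\alpha^2 \in F^\times$. Hence $Q := F + F\alpha + Fj + F\alpha j$ is an $F$-quaternion subalgebra of $A$ isomorphic to $(\alpha^2,-\alpha^2)_F$, which is split because $-\alpha^2 = N_{K/F}(\alpha)$ is a norm from $K$. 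This $Q$ contains $K$, is $\s$-stable, and $\s|_Q$ is orthogonal because $\s|_K = \id_K$ differs from the Galois conjugation induced by the canonical involution on $Q$; moreover $e \in Q$ is a rank-one metabolic idempotent, so $\s|_Q$ is metabolic. In characteristic $2$, a parallel construction works using a Skolem--Noether element $\beta \in A$ with $\beta\alpha\beta^{-1} = \alpha + 1$ in place of $\alpha(1-2e)$.

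The hard part will be the content of the second paragraph: identifying $\s|_C$ with the adjoint of the transferred hermitian form $\tilde h$ on the $(D\otimes_F K)$-module $V$, and deducing $W \cap \alpha W = 0$ from its anisotropy. Once this hyperbolic decomposition of $V$ compatible with the $K$-action is in place, the construction of $Q$ and the verification of its properties are essentially routine.
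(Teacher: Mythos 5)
Your approach — realizing $(A,\s)$ as $(\End_D V,\ad_h)$ and searching for a hyperbolic decomposition $V=W\oplus W'$ on which $K$ acts nicely — is genuinely different from the paper's, which is purely ring-theoretic: starting from a hyperbolic idempotent $e$, it writes $e=v+w$ with $v\in C$ and $w\in C'=\{x\in A\mid xk=\gamma(k)x\}$, derives the identities $v^2+w^2=v$, $\s(v)=1-v$, $\s(w)=-w$, $wv=\s(v)w$, then uses anisotropy of $\s|_C$ to invert $v$ and finally sets $Q=K\oplus Kwv'$. Your reduction of $W\cap\alpha W=0$ to anisotropy of $\tilde h$ is correct (if $v,w\in W\cap\alpha W$ then both $h(v,w)$ and $h(v,\alpha w)$ vanish, which forces $\tilde h(v,w)=0$), and in characteristic $\neq 2$ the rest of your computation with $e$ and $j=\alpha(1-2e)$ checks out, so there you have a genuine alternative argument.

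However, the ``small Artin--Schreier adaptation'' in characteristic $2$ hides a real gap, not a routine one. If $\car F=2$ and $\alpha^2=\alpha+c$ with $c\in F$, then for $w_1,w_2\in W$ one has
\[
h(\alpha w_1,\alpha w_2)=h(w_1,\alpha^2 w_2)=h(w_1,\alpha w_2)+c\,h(w_1,w_2)=h(w_1,\alpha w_2),
\]
so $h$ restricted to $\alpha W$ is, up to the identification $\alpha W\simeq W$, the full pairing $W\times\alpha W\to D$. Since $V=W\oplus\alpha W$ and $W^\perp=W$ (Lagrangian), this pairing is nondegenerate, so $\alpha W$ \emph{cannot} be totally isotropic. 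There is no choice of Artin--Schreier generator that fixes this: it is structurally impossible for $W$ and $\alpha W$ to both be Lagrangian when $\alpha$ is an Artin--Schreier element. As a consequence, $\s(e)\neq 1-e$ in characteristic $2$, and the whole construction of $Q$ from the projection $e$ onto $W$ along $\alpha W$ breaks. The Skolem--Noether replacement you mention at the end does not repair this, because the content of the theorem is precisely that the anisotropy hypothesis lets one choose that Skolem--Noether element to be $\s$-skew with square in $F^\times$; without the Lagrangian decomposition you have lost the device that was supposed to produce it. The paper's argument accomplishes this choice directly (that is what $v'=v^{-1}$ and $wv'$ are for), independently of characteristic.
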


\begin{proof}
Let $\gamma$ be the nontrivial $F$-automorphism of $K$ and $$C'=\{x\in
A\mid xk=\gamma(k)x\mbox{ for every } k\in K\}\,.$$ 
By \cite[Proposition 6.1]{BGBT18a} we have that $\s(C)=C$, $\s(C')=C'$
and $A=C\oplus C'$. 

Let $e\in A$ be such that $e^2=e$ and $\s(e)=1-e$.
We write $e=v+w$ with
  $v\in C$ and $w\in C'$. 
We have
$$  (v^2+w^2) + (vw+wv) =e^2=e=v+w.$$
Note that $v^2$, $w^2\in C$ and $vw,wv\in C'$.
As $A=C\oplus C'$, we obtain that
$$    v^2+w^2=v
\quad\mbox{  and }\quad
    vw+wv=w.$$
Furthermore
$\sigma(v)+\sigma(w)=\sigma(e)=1-e= (1-v)
  - w$.
As $\s(C)=C$ and $\s(C')=C'$, it follows that
  $$    \sigma(v) = 1-v\quad\mbox{ and } \quad    \sigma(w) = -w\,.$$
We conclude that 
$$wv=w-vw=\s(v)w\,.$$
We claim that there exists $v'\in C$ with $vv'=1$.
Suppose $x\in C$ is such that $vx=0$. 
Then  $\sigma(x)\cdot(1-v)=\sigma(vx)=0$, and therefore
  \[\sigma(x)=\sigma(x)v.\]
  It follows that
  \[
  \sigma(x)x=\sigma(x)vx=0\,.
  \]
Since $\sigma|_C$ is anisotropic we conclude that $x=0$. 
Therefore the $F$-linear map $C\to C$ given by multiplication with $v$
from the left is injective. Since $C$ is finite-dimensional, this map
is also surjective. Hence there exists an element $v'\in C$ with
$vv'=1$. 
Using that $wv=\s(v)w$ we obtain that 
$$\s(v')w=\s(v')wvv'=\s(v')\s(v)wv'=\s(vv')wv'=wv'\,.$$
As $\s(w)=-w$ we conclude that $\s(wv')=-wv'$ and 
$$(wv')^2=\s((wv')^2)=\s(v')w\s(v')w=\s(v')w^2v'=\s(v')(v-v^2)v'=\s(v')(1-v)\,.$$
As $\s(v)=1-v$ we obtain that $(wv')^2=1$. 
Since $wv'\in C'$ it follows that 
$Q=K\oplus Kwv'$ is a split $\s$-stable $F$-quaternion subalgebra.
Since $K$ is a quadratic \'etale $F$-algebra and $\s|_K=\id_K$, the involution $\s|_Q$ is orthogonal. 
Since $$\s(1+wv')(1+wv')=(1-wv')(1+wv')=1-(wv')^2 = 0\,,$$
we have that $\s|_Q$ is metabolic.
\end{proof}

%%%%%%%%%%%%%%%%%%%%%%%%%%%%%%%%%%%%%%%%%%%%
\section{Hyperbolicity in capacity four} %%%
%%%%%%%%%%%%%%%%%%%%%%%%%%%%%%%%%%%%%%%%%%%%
\label{sec:cap4}

Our study of algebras with involution of capacity $4$ in \Cref{sec:biquad} will crucially rely on the special case where the involution is hyperbolic, which we study in this section.
We start by showing that an algebra with hyperbolic involution in capacity four is decomposable along any 
quadratic neat  subalgebra (\Cref{T:hyp-quad-factor}), except in one special case.
We will then show that any decomposable unitary or symplectic involution of capacity four can be made hyperbolic by passing to the function field of some quadratic form of dimension at least five.
This will allow us in \Cref{sec:biquad} to show that a certain Pfister form which we will attach to an algebra with involution of capacity $4$ is independent from certain choices which we make in its construction.
\medskip

Let $(A,\s)$ be an $F$-algebra with involution. 

\begin{prop}
  \label{T:hyp-quad-factor}
  Assume that $\kap(A,\s)=4$, $\exp A\leq 2$ and $\s$ is hyperbolic.
  Then $(A,\s)$ is decomposable along every quadratic neat 
  $F$-subalgebra of $(A,\s)$.
\end{prop}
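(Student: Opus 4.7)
\emph{Proof plan.} Let $L$ be a neat quadratic $F$-subalgebra of $(A,\s)$. The task is to produce a $\s$-stable $F$-quaternion subalgebra $Q\subseteq A$ containing $L$. The proof proceeds by a dichotomy on $L$.

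If $L\simeq F\times F$, then since $\s$ is hyperbolic, \Cref{P:hyp-neat-in-quat-meta-factor} applies verbatim and furnishes a split $\s$-stable $F$-quaternion subalgebra of $A$ containing $L$, with $\s|_Q$ orthogonal and metabolic. Assume henceforth that $L=K$ is a separable quadratic field extension of $F$, and set $C=\C_A(K)$, which is $\s$-stable. By \cite[Proposition~6.1]{BGBT18a} one has a $\s$-stable direct sum $A=C\oplus C'$ with $C'=\{x\in A\mid xk=\gamma(k)x\text{ for all }k\in K\}$ and $\gamma$ the nontrivial $F$-automorphism of $K$. Neatness of $K$ combined with $[K:F]=\tfrac12\kap(A,\s)=2$ yields $\kap(C,\s|_C)=2$, and by \Cref{P:kap2isometa} either $\s|_C$ is anisotropic or $\s|_C$ is metabolic.

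The anisotropic subcase is immediate from \Cref{thm:existmetabolic}, which directly produces a $\s$-stable split $F$-quaternion subalgebra containing $K$. For the metabolic subcase, the hyperbolicity of $\s$ gives $1\in\Symd(\s)$; decomposing a witness $1=x+\s(x)$ along $A=C\oplus C'$ and using the $\s$-stability of both summands forces $1\in\Symd(\s|_C)$, so by \Cref{P:metahyp} the restriction $\s|_C$ is in fact hyperbolic. The plan is then to reduce to a $\s$-stable subalgebra of capacity~$2$. One applies \Cref{C:meta-quat-descent} to extract a split $\s$-stable $F$-quaternion $Q_0\subseteq A$ with $\s|_{Q_0}$ orthogonal metabolic and $K\subseteq\C_A(Q_0)$; writing $A'=\C_A(Q_0)$ gives a tensor factorisation $(A,\s)\simeq (Q_0,\s|_{Q_0})\otimes(A',\s|_{A'})$ with $\kap(A',\s|_{A'})=2$ and $K\subseteq\Symm(\s|_{A'})$ of maximal possible étale $F$-dimension. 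Then \cite[Corollary~6.6]{BGBT18a} places $K$ inside a $\s$-stable $F$-quaternion subalgebra $Q\subseteq A'$, completing the argument. The parity hypothesis $\coind C$ even needed for \Cref{C:meta-quat-descent} is obtained as follows: in the orthogonal, symplectic, and unitary outer-type cases, $C$ is simple over its center and the isotropy of $\s|_C$ (inherited from metabolicity) rules out $C$ being a division algebra, which forces $\coind C$ to be even since $\deg C\in\{2,4\}$; in the unitary inner-type case, $C\cong D\times D^\op$ is not simple, and one instead appeals directly to the structure of $(A,\s)\cong (B\times B^\op,\sw)$ with $\exp B\leq 2$, using Albert's theorem on decompositions of biquaternion algebras along a prescribed quadratic subfield to produce an $F$-quaternion $Q_0\subseteq B$ containing the image $K_0\subseteq B$ of $K$ and then assembling $Q=\{(q,\iota(q))\mid q\in Q_0\}$ for a suitable $F$-involution $\iota$ of $Q_0$ fixing $K_0$.

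The principal obstacle is the metabolic subcase. The technique of \Cref{thm:existmetabolic} --- building the quaternion by inverting a component $v\in C$ of a hyperbolic idempotent --- collapses when $\s|_C$ itself is isotropic, so one is forced along the indirect route through \Cref{C:meta-quat-descent} and its coindex-parity hypothesis. Verifying that hypothesis subdivides along the type of $\s$, and the genuinely delicate situation is the unitary inner-type case, where the centraliser is not simple and where the hypothesis $\exp A\leq 2$ is indispensable in supplying, via Albert's theorem, the $F$-quaternion subalgebra of $B$ in which $K_0$ must sit.
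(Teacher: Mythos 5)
Your proof is correct and takes essentially the same route as the paper's. The only difference is organizational: the paper disposes of the unitary inner-type case (where $A$ is not simple) upfront via Albert's theorem, and then in the simple case runs the anisotropic/isotropic dichotomy on $\s|_C$; you instead run the anisotropic/metabolic dichotomy first and deal with the unitary inner-type case inside the metabolic branch, precisely because there $\coind C$ need not be even so that \Cref{C:meta-quat-descent} can fail. Both orderings are sound, and you correctly identify the role of the hypothesis $\exp A\leq 2$ (it is used only through Albert's theorem in the non-simple case). Your extra verification that $1\in\Symd(\s|_C)$ via the decomposition $A=C\oplus C'$ is valid, though slightly more than necessary, since $\s|_C$ has the same type as $\s$ and $\s$ is not orthogonal in characteristic two, so $1\in\Symd(\s|_C)$ holds automatically.
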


\begin{proof}
We have $1\in\Symd(\s)$, because $\s$ is hyperbolic.
Let $K$ be an arbitrary quadratic neat  $F$-subalgebra of $(A,\s)$.
To prove the statement we need to show that $K$ is contained in a
$\s$-stable $F$-quaternion subalgebra of $A$. 
When $K\simeq F\times F$ this already follows by
\Cref{P:hyp-neat-in-quat-meta-factor}, so we may assume that $K$ is a
field. 

Suppose first that $A$ is not simple. In this case we may identify
$(A,\s)$ with $(B\times B^\op,\sw)$ for some central simple
$F$-algebra $B$. 
Then $K=\{(x,x)\mid x\in K_0\}$ for a separable quadratic field
extension $K_0$ of $F$  contained in $B$. 
Since we have $\exp B=\exp A\leq 2$, it follows by a theorem of
Albert, \cite[(16.2)]{BOI}, that $K_0$ is contained in an
$F$-quaternion subalgebra $Q_0$ of $B$.  
We fix an orthogonal involution $\tau$ on $Q_0$ with $\tau|_{K_0}=\id_{K_0}$.
Then $Q=\{(x,\tau({x}))\mid x\in Q_0\}$ is an $F$-quaternion
subalgebra of $A=B\times B^\op$ containing $K$ and $Q$ is stable under
the involution $\s=\sw$. 
Furthermore, $(Q,\s|_Q)\simeq (Q_0,\tau)$, whereby $\s|_Q$ is orthogonal.

Assume now that $A$ is simple.
Let $C=\C_A(K)$ and $\s_C=\s|_C$. Then $C$ is simple and $(C,\s_C)$ is
a $K$-algebra with involution with
$\deg C=\frac12\deg A$. Since  by~\cite[Proposition~3.3]{BGBT18a}
$\s_C$ has the same type as $\s$,
it follows that $\kap(C,\s_C)=2$. 
If $\s_C$ is anisotropic, then we obtain the desired conclusion by \Cref{thm:existmetabolic}.
Assume now that $\s_C$ is isotropic.
As $C$ is simple and $\s_C$ is isotropic, $\coind C$ is even.
As further $\kap(C,\s_C)=2$, it follows by \Cref{P:kap2isometa} and \Cref{P:metahyp} that $(C,\sigma_C)$ is hyperbolic.
Hence, by \Cref{C:meta-quat-descent},  $C$ contains a $\s$-stable split $F$-quaternion subalgebra $Q'$ such that $(Q',\s|_{Q'})$ is orthogonal and metabolic. Then $D=\C_A(Q')$ is a $\s$-stable $F$-subalgebra of $A$ containing $K$. Let $\s_D=\s|_D$. 
Then $(D,\s_D)$ is an $F$-algebra with involution with $\kap(D,\s_D)=2$ and $\s_D$ is of the same type as $\s$.
Since $K\subseteq \Symm(\s_D)$, it follows by \cite[Corollary 6.6]{BGBT18a} that $K$ is contained in a $\s$-stable $F$-quaternion subalgebra $Q$ of $D$, and hence of $A$.
\end{proof}

The following example explains why the condition on the exponent
cannot be omitted in the statement of \Cref{T:hyp-quad-factor}.

\begin{ex}
  \label{ex:exp4}
  Let $B$ be a central $F$-division algebra with $\exp B=\deg B=4$.
  Then $(B\times B^\op,\sw)$ is an $F$-algebra with unitary involution
  of capacity $4$ which is hyperbolic and indecomposable.
\end{ex}

We will use standard notation from \cite{EKM} for diagonal quadratic
forms in characteristic different from $2$ and for nonsingular binary
quadratic forms in arbitrary characteristic.  We recall some quadratic
form terminology from \cite[(7.17)]{EKM}, in particular concerning the
radicals of a quadratic form and of its polar form.

Let $q:V\to F$ be a quadratic form over $F$, defined on a
finite-dimensional $F$-vector space $V$. 
We denote by $b_q$ the \emph{polar form of~$q$} given by $$V\times
V\to F,\qquad (x,y)\mapsto q(x+y)-q(x)-q(y)\,.$$ 
We further set 
\begin{eqnarray*}\rad(b_q) & = & \{x\in V\mid b_q(x,y)=0\mbox{ for all } y\in V\}\\
\rad(q) & = & \{x\in \rad(b_q)\mid q(x)=0\}\,
\end{eqnarray*}
and observe that these are $F$-subspaces of $V$ with $\rad(q)\subseteq \rad(b_q)$. 
Moreover,
if $\car F\neq 2$ then $q(x)=\frac{1}{2}b_q(x,x)$ for all $x\in V$ and
thus $\rad(q)=\rad(b_q)$. 
We call the quadratic form $q$ \emph{regular} if $\rad(q)=\{0\}$ and
\emph{nondegenerate}  
 if $q$ is regular and $\dim_F\rad(b_q)\leqslant 1$.

An $F$-subspace $V$ of $\Symm(\s)$ such that $x^2\in F$ for all $x\in V$ gives rise to 
a quadratic form $\psi:V\to F, x\mapsto x^2$ and thus yields an $F$-algebra homomorphism $\C(\psi)\to A$ where $\C(\psi)$ is the Clifford algebra of $\psi$, which relates $\s$ to the standard involution on $\C(\psi)$.
This can be used to obtain criteria for decomposability of $(A,\s)$, and it will be used to this purpose in \Cref{L:make-decomp-hyp}.

\begin{lem}\label{L:sim-space-5-6}
Assume that $\kap(A,\s)=4$, $\s$ is unitary or symplectic and $(A,\s)$
is totally decomposable. 
Then there exists an $F$-subspace $V$ of $\Symm(\s)$ with
$$\dim_FV\,=\,
\left\{
\begin{array}{cl} 
6 & \mbox{ if $\s$ is symplectic},\\
5 & \mbox{ if $\s$ is unitary},
\end{array}
\right.$$
such that $x^2\in F$ for all $x\in V$ and such that $\psi\colon V\to F$,
$x\mapsto x^2$ is a nondegenerate quadratic form. 
Furthermore, if for such an $F$-space $V$ the corresponding form $\psi$
is isotropic, then $\s$ is hyperbolic. 
\end{lem}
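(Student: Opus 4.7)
The plan is as follows: use \Cref{P:cap2-ttdec} to fix a concrete tensor decomposition of $(A,\sigma)$; construct $V$ explicitly from distinguished pure symmetric and skew elements of the quaternion factors; and derive hyperbolicity from isotropy of $q$ by exhibiting a $\sigma$-stable subalgebra of capacity~$2$ containing an isotropic symmetric witness, to which \Cref{P:kap2isometa} and \Cref{P:metahyp} apply.

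By \Cref{P:cap2-ttdec} with $n=2$ I would fix $(A,\sigma)\simeq(Q_1,\sigma_1)\otimes(Q_2,\sigma_2)\otimes(C,\tau_C)$ with orthogonal $\sigma_i$ on $F$-quaternions $Q_i$, where $(C,\tau_C)=(K,\can_{K/F})$ for a quadratic \'etale $K/F$ in the unitary case and $(C,\tau_C)=(Q_3,\can_{Q_3})$ in the symplectic case. In each $Q_i$ I pick a $2$-dimensional subspace $W_i\subseteq\Symm(\sigma_i)\cap Q_i^0$ of trace-zero symmetric elements (so that $w^2\in F$ for all $w\in W_i$) and an element $g_i\in\Skew(\sigma_i)\cap Q_i^0$ with $g_i^2\in F^\times$; the standard fact that $\sigma_i$-symmetric and $\sigma_i$-skew pure elements of~$Q_i$ pair to zero under the reduced trace form gives $W_i\perp Fg_i$ in the norm form of~$Q_i$. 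In the unitary case, pick $\theta\in K$ with $\theta^2\in F^\times$ and $\can_{K/F}(\theta)=-\theta$ and set
\[
V:=(W_1\otimes 1\otimes 1)\oplus F(g_1\otimes g_2\otimes 1)\oplus(g_1\otimes W_2\otimes\theta),
\]
of dimension~$5$. In the symplectic case fix $w_2^\star\in W_2$ with $(w_2^\star)^2\neq 0$ and set
\[
V:=(W_1\otimes 1\otimes 1)\oplus F(g_1\otimes g_2\otimes 1)\oplus(g_1\otimes w_2^\star\otimes Q_3^0),
\]
of dimension~$6$. Parity inspection in the three tensor factors gives $V\subseteq\Symm(\sigma)$.

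The key point for $v^2\in F$ is that the three summands of $V$ pairwise anticommute in~$A$: the first summand anticommutes with the other two thanks to $W_1\perp Fg_1$ in $Q_1$, and the middle summand anticommutes with the third thanks to $W_2\perp Fg_2$ in~$Q_2$ (in the symplectic case one additionally uses that $Q_3^0$ commutes with~$B$ and that $p^2\in F$ for $p\in Q_3^0$). Consequently all cross terms in the expansion of $v^2$ vanish, and $q\colon v\mapsto v^2$ is the orthogonal direct sum of the three summand quadratic forms: a nondegenerate $2$-dimensional subform of the norm form of $Q_1$, a $1$-dimensional form $\langle g_1^2g_2^2\rangle$, and a nondegenerate form of dimension~$2$ or~$3$ coming (up to a nonzero scaling) from the norm form of $Q_2$ on $W_2$ or from the norm form of $Q_3$ on $Q_3^0$. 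Each summand is nondegenerate, hence~$q$ is nondegenerate.

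For the final assertion, suppose $q(v)=0$ for some $v\in V\setminus\{0\}$, so that $v\in\Symm(\sigma)$ and $v^2=0$. I would produce a $\sigma$-stable unital $F$-subalgebra $A'\subseteq A$ with $\kap(A',\sigma|_{A'})\leq 2$ containing~$v$: when $v$ lies inside $B$, either $v\in Q_1$ (take $A'=Q_1$) or $v$ is a combination of two anticommuting pure elements of $B$ that generate a $\tau$-stable quaternion subalgebra of~$B$ (take this as $A'$); when $v$ has a nonzero third-summand component, write $v=s+kp$ with anticommuting $s,k\in B$ satisfying $s^2,k^2\in F$ and $p\in C$ of trace zero, and take $A'=F\langle s,k\rangle\otimes F[p]$, a $\sigma$-stable subalgebra of capacity~$2$ containing~$v$. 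In every case $v$ is a symmetric isotropic element of~$(A',\sigma|_{A'})$, so by \Cref{P:kap2isometa} the involution $\sigma|_{A'}$ is metabolic; since $1\in\Symd(\sigma|_{A'})$ (as $\sigma$, and hence $\sigma|_{A'}$, is unitary or symplectic), \Cref{P:metahyp} upgrades metabolicity to hyperbolicity, and the hyperbolic idempotent in $A'$ is a hyperbolic idempotent in~$A$. The main obstacle is organising the case analysis for $A'$, especially degenerate subcases in which some component of $v$ vanishes or a pure square vanishes, forcing one of the $Q_i$ to split and carry a hyperbolic $\sigma_i$ in its own right (so that $(A,\sigma)$ is hyperbolic via the ambient decomposition); some additional care is required in characteristic~$2$, where the pure symmetric/skew decomposition of the $Q_i$ has to be adapted but the structural anticommutation argument persists.
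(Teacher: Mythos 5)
Your construction of $V$ in characteristic different from $2$ is essentially sound and uses a legitimate alternative decomposition (two quaternion factors with orthogonal involution via \Cref{P:cap2-ttdec}, plus the centralizer with its canonical involution), arriving at a valid subspace. However, there are two concrete gaps.

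First, characteristic~$2$ is not actually dealt with. You flag this at the end but offer no construction: the symmetric/skew decomposition of pure quaternions, the choice of a trace-zero $g_i\in\Skew(\sigma_i)$, and the ``$W_i\perp Fg_i$ implies anticommutation'' step all rely on $\car F\neq 2$. In characteristic~$2$ one has $\Skew(\sigma_i)=\Symm(\sigma_i)$, the reduced-trace-zero space contains $F$, and orthogonality in the norm form gives only $uv+vu\in F$ rather than $uv+vu=0$. The paper's proof gives an explicitly different $V$ when $\car F=2$ (basis $u_1,v_1,\dots,u_3,v_3$ with $u_iv_i+v_iu_i=1$, yielding $q\simeq[a_1,b_1]\perp[a_2,b_2]\perp[a_3,b_3]$), and the lemma is false without the characteristic~$2$ case being settled.

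Second, your argument for the final assertion is over-engineered and, as you acknowledge, incomplete: the case analysis over vanishing components of the isotropic vector~$v$ is precisely where it breaks down. There is a uniform argument that makes the case analysis unnecessary. Observe that for your $V$ (as for the paper's) the polar form of $q$ is $b_q(x,y)=(x+y)^2-x^2-y^2=xy+yx$. If $x\in V\setminus\{0\}$ satisfies $q(x)=x^2=0$, then nondegeneracy of $q$ gives some $y\in V$ with $xy+yx\neq 0$, and after rescaling $y$ we may take $xy+yx=1$. Then $Q=F\oplus Fx\oplus Fy\oplus Fxy$ is a $\sigma$-stable $F$-quaternion subalgebra of $A$ on which $\sigma$ is isotropic (since $x\sigma(x)=x^2=0$), hence metabolic by \Cref{P:kap2isometa}; this forces $\sigma$ to be metabolic on $A$, and as $1\in\Symd(\sigma)$ the involution is hyperbolic by \Cref{P:metahyp}. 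This closes the argument cleanly in one stroke, with no degenerate subcases and no appeal to the explicit tensor structure of~$v$.
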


\begin{proof}
Assume first that $\s$ is symplectic.
Since $(A,\s)$ is totally decomposable, we obtain that 
$$(A,\s)\simeq(Q_1,\can_{Q_1})\otimes (Q_{2},\can_{Q_2})\otimes
(Q_{3},\can_{Q_3})$$ for three $F$-quaternion algebras $Q_1,Q_2,Q_3$. 
For $i=1,2,3$, using the different presentations for quaternion algebras given in~\cite[p.~25]{BOI},
we fix $u_i,v_i\in Q_i$ with
\begin{equation}
\label{EQ:quat}
u_i^2,v_i^2\in\mg{F}\mbox{ and }u_iv_i+v_iu_i=
\left\{\begin{array}{ll} 0 & \mbox{ if }\car(F)\neq 2,\\
1 & \mbox{ if }\car(F)= 2.\end{array}\right.
\end{equation}
Set 
$$
V=
\left\{\begin{array}{cl}
(Fu_1\oplus Fv_1\oplus Fu_1v_1)u_3 \oplus (Fu_2\oplus Fv_2\oplus Fu_2v_2)v_3
& \mbox{if }\car(F)\neq 2,\\ 
Fu_1\oplus Fv_1\oplus Fu_2\oplus Fv_2\oplus Fu_3\oplus Fv_3 & \mbox{if }\car(F)=2.\end{array}\right.$$
Then $\dim_FV=6$ and $V\subseteq \Symm(\s)$, 
and one can easily check that $x\mapsto x^2$ defines a nondegenerate
quadratic form $V\to F$: 
In fact, letting $a_i=u_i^2,b_i=v_i^2\in\mg{F}$ for $i=1,2,3$, we obtain that 
$$
\psi\,\simeq\, 
\left\{\begin{array}{cl}
a_3\la a_1,b_1,-a_1b_1\ra\perp b_3\la a_2,b_2,-a_2b_2\ra & \mbox{if }\car(F)\neq 2,\\ 
{}[a_1,b_1]\perp [a_2,b_2]\perp [a_3,b_3] 
& \mbox{if }\car(F)= 2.
\end{array}\right.$$

Assume now that $\s$ is unitary.
Then $$(A,\s)\simeq (K,\s_{K})\otimes (Q_1,\can_{Q_1})\otimes (Q_{2},\can_{Q_2})$$ for $K=\Z(A)$, $\s_K=\s|_K$ and two $F$-quaternion algebras $Q_1,Q_2$.
If $\car(F)\neq 2$ then let $d\in\mg{F}$ be such that $K\simeq F(\sqrt{d})$. Hence there exists $w\in K$ such that $K=F\oplus Fw$ and $w^2=d$.
For $i=1$, $2$ we fix $u_i$, $v_i\in Q_i$ satisfying (\ref{EQ:quat}).
Set 
$$
V=
\left\{\begin{array}{cl}
(Fu_1\oplus Fv_1\oplus Fu_1v_1)u_2 \oplus Fwv_2\oplus Fwu_2v_2
& \mbox{if }\car(F)\neq 2,\\ 
Fu_1\oplus Fv_1\oplus Fu_2\oplus Fv_2\oplus F & \mbox{if }\car(F)=2.\end{array}\right.$$
Then $\dim_FV=5$ and $V\subseteq\Symm(\s)$, and one can easily check
that $x\mapsto x^2$ defines a nondegenerate quadratic form $\psi\colon
V\to F$:
In fact, letting $a_i=u_i^2,b_i=v_i^2\in\mg{F}$ for $i=1,2$, we obtain that 
$$
\psi\,\simeq\, 
\left\{\begin{array}{cl}
a_2\la a_1,b_1,-a_1b_1\ra\perp db_2\la 1,-a_2\ra & \mbox{if }\car(F)\neq 2,\\ 
{}[a_1,b_1]\perp [a_2,b_2]\perp \la 1\ra
& \mbox{if }\car(F)= 2.
\end{array}\right.$$

After having chosen $V$ and $\psi$ case by case, the rest of the argument
can be given uniformly for all four cases. 
Note that
$$\hspace{3cm}\psi(x+y)-\psi(x)-\psi(y)=xy+yx\qquad\mbox{ for any }\,\, x,y\in V\,.$$

Suppose now that $\psi$ is isotropic.
Fix $x\in V\setminus\{0\}$ such that $\psi(x)=0$.
Since $\psi$ is nondegenerate, there exists $y\in V\setminus\{0\}$
such that $xy+yx=1$. 
It easily follows that $Q=F\oplus Fx\oplus Fy\oplus Fxy$ is a
$\s$-stable $F$-quaternion subalgebra of~$A$. 
Since $x\s(x)=x^2=\psi(x)=0$, 
it follows that $\s|_Q$ isotropic, and since $Q$ is an $F$-quaternion
algebra, we conclude that $\s|_Q$ is metabolic. This implies that $\s$
is metabolic. 
As  $\s$ is symplectic or unitary, we obtain by \Cref{P:metahyp} that
$\s$ is hyperbolic. 
\end{proof}

\begin{lem}\label{L:make-decomp-hyp}
Assume that $(A,\s)$ is totally decomposable, $\kap(A,\s)=4$ and $1\in\Symd(\s)$.
Let $n=1$ if $\s$ is orthogonal, $n=2$ if $\s$ is unitary, and $n=3$
if $\s$ is symplectic. 
There exists a field extension $F'/F$   such that $(A,\s)_{F'}$ is
hyperbolic and every anisotropic quadratic $n$-fold Pfister form over
$F$ remains anisotropic over $F'$. 
\end{lem}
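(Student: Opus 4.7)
The plan is to treat the three cases $n\in\{1,2,3\}$ separately. The common strategy is to construct a quadratic form $q$ on an $F$-subspace of $\Symm(\s)$ whose isotropy over any extension forces $\s$ to be hyperbolic, and then let $F'$ be the function field of $q$ (iterated if necessary). Preservation of anisotropic $n$-fold Pfister forms will follow from Hoffmann's separation theorem provided $\dim q>2^n$.

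For $n=2$ I would invoke \Cref{L:sim-space-5-6} to produce a nondegenerate $5$-dimensional $q$. Setting $F'=F(q)$ makes $q_{F'}$ isotropic, hence $(A,\s)_{F'}$ hyperbolic, and since $\dim q=5>4=2^n$, Hoffmann's theorem preserves anisotropic $2$-fold Pfister forms over $F$. For $n=1$ (so $\car F\neq 2$) the lemma formally excludes this case, but an analogous $3$-dimensional construction works. Using \Cref{P:cap2-ttdec}, write $(A,\s)\simeq(Q_1,\s_1)\otimes(Q_2,\s_2)$ with orthogonal $\s_i$, choose generators $u_i,v_i\in Q_i$ with $\s_i=\Int(u_i)\circ\can_{Q_i}$, set $a_i=u_i^2$, $b_i=v_i^2$, and form the span $V=\la v_1,u_1v_1,u_1u_2\ra\subseteq\Symm(\s)$. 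Its three generators pairwise anticommute and have squares $b_1,-a_1b_1,a_1a_2\in F^\times$, yielding a nondegenerate form $q(x)=x^2$. An argument parallel to the last paragraph of the proof of \Cref{L:sim-space-5-6} shows that isotropy of $q$ produces a $\s$-stable split $F$-quaternion subalgebra with metabolic restriction of $\s$, making $\s$ metabolic and, since $1\in\Symd(\s)$, hyperbolic by \Cref{P:metahyp}. As $\dim q=3>2=2^n$, the choice $F'=F(q)$ works.

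The symplectic case $n=3$ is the principal obstacle. \Cref{L:sim-space-5-6} gives only a $6$-dimensional $q$, and $6<2^n=8$, so a direct single-step extension $F(q)$ could kill anisotropic $3$-fold Pfister forms having $q$ as a Pfister neighbor. I would proceed iteratively instead, constructing a tower $F=F_0\subseteq F_1\subseteq\cdots\subseteq F'$ of finite length in which each transition $F_i\subseteq F_{i+1}$ is the function field of a quadratic form $\psi_i$ over $F_i$ of dimension at least $9$, chosen so that $(A,\s)_{F_{i+1}}$ has strictly larger isotropy index than $(A,\s)_{F_i}$. Since $\dim\psi_i\geq 9>2^n$, Hoffmann's theorem preserves all anisotropic $3$-fold Pfister forms throughout the tower; and since the isotropy index of $(A,\s)$ is bounded above by $\kap(A,\s)=4$, the tower terminates after finitely many steps with $\s_{F'}$ hyperbolic. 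The central technical difficulty is to exhibit, at each step, a quadratic form $\psi_i$ of dimension at least $9$ whose becoming isotropic over $F_i(\psi_i)$ strictly increases the isotropy of $\s$.
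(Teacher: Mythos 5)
Your proposal handles $n=1$ and $n=2$ correctly, but the symplectic case $n=3$, which you rightly flag as the principal obstacle, is left with an acknowledged and genuine gap. You propose an iterative tower of function fields of forms of dimension $\geq 9$ that increase the ``isotropy index'' of $\s$ step by step, but you do not exhibit such forms, and it is not at all clear that they exist: the only natural quadratic form living in $\Symd(\s)$ that \Cref{L:sim-space-5-6} gives is $6$-dimensional, and there is no evident larger form whose isotropy controls $\s$. Without producing the forms $\psi_i$, the argument is a plan, not a proof.

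The paper resolves the $n=3$ case by a different, and complete, route. Starting from the $6$-dimensional form $\psi$ of \Cref{L:sim-space-5-6}, the paper first tries $F'=F(\psi)$; if an anisotropic $3$-fold Pfister form $\pi$ dies over $F(\psi)$, then by the Subform Theorem $\psi$ is similar to a subform of $\pi$, say $\pi\sim\psi\perp\beta$ with $\beta$ similar to the norm form of a quadratic field extension $K/F$. Then $\psi_K$ is hyperbolic by Witt cancellation. The crucial structural input is that $\psi$ is the squaring form on a $6$-dimensional subspace of $A$, so the Clifford algebra $\mathsf{C}(\psi)$ embeds in $A$, and by dimension count $A\simeq\mathsf{C}(\psi)$. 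Hence $\psi_K$ hyperbolic forces $A_K$ split and so $\ind A\leq 2$. Now Dolphin's classification gives $(A,\s)\simeq\Ad(\varphi)\otimes(Q,\can_Q)$ for a quaternion algebra $Q$ Brauer equivalent to $A$ and a bilinear $2$-fold Pfister form $\varphi$; setting $\rho=\varphi\otimes\Nrd_Q$, a $4$-fold Pfister form, $(A,\s)_{F(\rho)}$ is hyperbolic by Dolphin's hyperbolicity criterion, while $F(\rho)$ preserves anisotropic $3$-fold Pfister forms since $\dim\rho=16>8$. Thus one may take $F'=F(\rho)$, a single function-field extension, with no iteration. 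Your proposal would need an analogous structural reduction to identify a large enough form, and it is precisely this that is missing.

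Two smaller remarks. For $n=1$ your $3$-dimensional construction and application of Hoffmann's separation theorem is sound, but the paper's argument is more elementary: since $(A,\s)$ decomposes with a quaternion factor carrying the canonical involution, passing to the function field of the associated conic splits that factor and makes $\s$ hyperbolic, while anisotropic $1$-fold Pfister forms survive because $F$ is relatively algebraically closed in that function field. Also, the paper uses the Subform Theorem rather than Hoffmann's theorem throughout; in the $n=2$ case the comparison $5>4$ rules out the subform inclusion immediately, so the two tools are interchangeable there, but it is worth knowing the paper never needs the full strength of Hoffmann's bound.
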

\begin{proof} 
Assume first that $\s$ is orthogonal. Then the hypothesis implies that
$\car(F)\neq 2$. 
It is well-known (see e.g.~\cite[Corollary 15.12]{BOI} or
\cite[Proposition 3.9]{BU18}) that every tensor product of two 
$F$-quaternion algebras with orthogonal involution is isomorphic to a
tensor product of two $F$-quaternion algebras with canonical
involution. 
In particular, since $(A,\s)$ is totally decomposable, there exists a
$\s$-stable $F$-quaternion subalgebra $Q$ of $A$ on which $\s$
restricts to the canonical involution. 
Let $F'/F$ be the function field of the Severi--Brauer variety
associated with $Q$, or equivalently, the function field of the
projective conic given by the pure part of the norm form of $Q$. 
Then $Q_{F'}$ is split, whereby $\s|_{Q_{F'}}$ is hyperbolic.
Hence $(A,\s)_{F'}$ is hyperbolic.
Since $F$ is relatively algebraically closed in $F'$, every
anisotropic $1$-fold Pfister form over $F$ stays anisotropic
over~$F'$. 

Suppose now that $\s$ is unitary or symplectic.
Since $(A,\s)$ is totally decomposable, by \Cref{L:sim-space-5-6}
there exists an $F$-subspace $V$ of $\Symm(\s)$ with 
$$\dim_FV\,=\,
\left\{
\begin{array}{cl} 
6 & \mbox{ if $\s$ is symplectic},\\
5 & \mbox{ if $\s$ is unitary},
\end{array}
\right.$$
such that $x^2\in F$ for all $x\in V$ and such that $V\to F$,
$x\mapsto x^2$ is a nondegenerate quadratic form. 
We denote this quadratic form by $\psi$.
Let $F(\psi)$ be the function field of the projective quadric defined
by $\psi$ over $F$. 
Since $\psi_{F(\psi)}$ is isotropic, it follows by
\Cref{L:sim-space-5-6} that the $F(\psi)$-algebra with involution
$(A,\s)_{F(\psi)}$ is hyperbolic. 

Our first attempt for choosing $F'$ is to take $F(\psi)$.
Suppose that this choice is not satisfying the claim.
Then there must exist an anisotropic quadratic $n$-fold Pfister form
$\pi$ over $F$ such that $\pi_{F(\psi)}$ is isotropic. 
Since quadratic Pfister forms are either anisotropic or hyperbolic, it
follows that $\pi_{F(\psi)}$ is hyperbolic. 
By the Subform Theorem \cite[(22.5)]{EKM}, this implies that $\psi$ is
similar to a subform of $\pi$.  
In particular, $5\leq \dim(\psi)\leq \dim(\pi)$, whereby $n=3$. 
Hence the involution $\s$ is symplectic and $\dim(\psi)=6$.
It follows that $\pi$ is similar to $\psi\perp\beta$ for a regular
$2$-dimensional quadratic form $\beta$ over $F$. 
Hence $\beta$ is similar to the norm form of a separable quadratic  extension $K/F$.
Since $\pi$ is anisotropic over $F$, so is $\beta$, whereby $K$ is a field.
Moreover $\beta_K$ is hyperbolic, and since $\pi_K$ is a Pfister form
containing a form similar to $\beta_K$, it follows that $\pi_K$ is
hyperbolic. 
By Witt Cancellation \cite[(8.4)]{EKM} we obtain that $\psi_K$ is hyperbolic.

Let $\mathsf{C}(\psi)$ denote the Clifford algebra of $\psi$.
Since $\psi$ is nondegenerate,  $\mathsf{C}(\psi)$ is a central simple
$F$-algebra with $\dim_F \mathsf{C}(\psi)=2^{\dim(\psi)}=2^6$. 
Moreover, the definition of $\psi$ as the squaring map on an
$F$-subspace of $A$ gives rise to an $F$-algebra homomorphism
$\mathsf{C}(\psi)\to A$, which is injective, because
$\mathsf{C}(\psi)$ is simple. 
Note that $\deg A=8$ as $(A,\s)$ is symplectic of capacity $4$.
Hence $\dim_F\mathsf{C}(\psi)=\dim_F A<\infty$. We 
conclude that $A\simeq \mathsf{C}(\psi)$.

In particular, as $\psi_K$ is hyperbolic, $A_K$ is split.
Therefore $\ind A\leq [K:F]=2$.
 Hence $A$ is Brauer equivalent to an $F$-quaternion algebra $Q$.
Since $(A,\s)$ is totally decomposable, 
 it follows by \cite[Theorem 6.1]{Dol17} that $(A,\s)\simeq
 \Ad(\varphi)\otimes (Q,\can_Q)$ for some bilinear $2$-fold Pfister
 form $\varphi$ over $F$. 

Let $\rho$ be the quadratic $4$-fold Pfister form over $F$ given by
the tensor product of  
$\varphi$ with the norm form of $Q$.
If $\rho$ is isotropic, then since it is a quadratic Pfister form, it
is hyperbolic, and it follows by \cite[Theorem 5.2]{Dol17} that
$(A,\s)$ is hyperbolic. 
Denoting by $F(\rho)$ the function field of the projective quadric
defined by $\rho$ over $F$, we obtain in any case that
$(A,\s)_{F(\rho)}$ is hyperbolic. 
On the other hand, since quadratic Pfister forms are either
anisotropic or hyperbolic, it follows by the Subform Theorem
\cite[(22.5)]{EKM} that every anisotropic $3$-fold Pfister form over
$F$ remains anisotropic over $F(\rho)$. 
Hence we may take $F'=F(\rho)$.
\end{proof}

%%%%%%%%%%%%%%%%%%%%%%%%%%%%%%%%%%%%%%%%%%%%%%%%%%%%%%%%%%%%%%%%%%%
\section{Construction of the discriminant from a biquadratic algebra\label{sec:biquad}} %%%
%%%%%%%%%%%%%%%%%%%%%%%%%%%%%%%%%%%%%%%%%%%%%%%%%%%%%%%%%%%%%%%%%%%

In this section we study an $F$-algebra with involution $(A,\s)$ with 
\begin{equation*}
1\in\Symd(\s)\quad\mbox{ and }\quad \kap(A,\s)=4\,.\end{equation*}
The first hypothesis excludes the case where $\s$ is orthogonal and  $\car(F)= 2$.
The second hypothesis means that 
\begin{equation*}
\dim_FA=2^{n+3} \quad\mbox{ for }\quad n=\begin{cases} 1 & \textrm{if $\s$ is orthogonal,}\\ 2 & \textrm{if $\s$ is unitary,}\\ 3 & \textrm{if $\s$ is symplectic.}\end{cases}\end{equation*}
We will attach to $(A,\s)$ a quadratic form over $F$ of dimension $2^n$ and study its properties. This form will yield a criterion for the decomposability of $(A,\s)$.
The construction of a candidate for this form crucially relies on the
existence of a biquadratic neat  $F$-subalgebra $L$ of $(A,\s)$, which
was proven in \cite[Theorem~7.4]{BGBT18a}.
By~\cite[Theorem~4.1]{BGBT18a}, the condition that $1\in\Symd(\s)$ implies that every neat
  subalgebra of $(A,\s)$ is contained in $\Symd(\s)$. It also implies that
  $x^2\in\Symd(\s)$ for all $x\in\Symm(\s)$, for if $\ell\in A$ is such
  that $\ell+\s(\ell)=1$, then $x^2 = x\ell x + \s(x\ell x)$.

\smallskip
By a \emph{biquadratic $F$-algebra} we mean a commutative $F$-algebra of dimension $4$ which is isomorphic to a tensor product of two quadratic $F$-algebras.
Let $L$ be an \'etale biquadratic $F$-algebra, and for an $F$-automorphism $\gamma$, let
$$L^\gamma=\{x\in L\mid \gamma(x)=x\}\,.$$
There are precisely three $F$-automorphisms $\gamma$ of order two for which $L^\gamma$ is a quadratic \'etale extension of $F$.
Together with $\id_L$, they form a subgroup $G$ of the automorphism group of $L$ such that the fixed field $L^G$ is equal to $F$. (In other terms, $L$ is a {\em $G$-Galois algebra} or a {\em Galois $F$-algebra} if $G$ is clear from the context, see \cite[(18.15)]{BOI}.)
We refer to $G$ as the \emph{Galois group of $L$ over $F$}.

\begin{lem}\label{L:quaternionizer}
Assume that $\kap(A,\s)=4$.
Let $L$ be a biquadratic neat  $F$-subalgebra of $(A,\s)$ and let
$\gamma$ be a nontrivial element of the Galois group of $L/F$. Let
$s:L^\gamma\to F$ be an $F$-linear form with $\ker(s)=F$ and
let $$W=\{x\in \Symd(\s)\mid yx=x\gamma(y)\mbox{ for all }y\in
L\}\,.$$ 
Then $\dim_FW=2^n$ for $n=\log_2\dim_FA-3$.
Moreover, $x^2\in L^\gamma$ for all $x\in W$, and
\[
  q\colon W\to F,\qquad x\mapsto s(x^2)
\]
is a nondegenerate quadratic form over $F$.
This form is isotropic if and only if there exists an element $x\in W$
with $x^2\in\mg{F}$.  
\end{lem}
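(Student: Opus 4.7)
The plan is to exploit the Galois grading of $A$ coming from $L$.  Writing $G\simeq(\zz/2)^2$ for the Galois group of $L/F$, the decomposition $L\otimes_F L\cong\prod_{g\in G}L$ together with the freeness of $A$ as a left $L$-module (by neatness) yields a decomposition $A=\bigoplus_{g\in G}A_g$ with $A_g=\{x\in A\mid yx=xg(y)\text{ for all }y\in L\}$ a free left $L$-module of rank $2^{n-1}$; this grading is multiplicative in the sense $A_gA_{g'}\subseteq A_{gg'}$, and as $g^{-1}=g$ the involution $\s$ preserves each $A_g$.  From $\s(yx)=\s(x)y$ together with $\s(x)\in A_\gamma$ one deduces $\s(yx)=\gamma(y)\s(x)$ for $y\in L$ and $x\in A_\gamma$, so $\s|_{A_\gamma}$ is $\gamma$-semilinear over the left $L$-action.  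Thus $A_\gamma$ becomes a module over the $F$-algebra $E=L\oplus Lt$ subject to $t^2=1$ and $ty=\gamma(y)t$ (where $t$ acts as $\s|_{A_\gamma}$), and the faithful action of $E$ on $L$ itself (by multiplication and the Galois action) realises $E\cong\End_{L^\gamma}(L)\cong M_2(L^\gamma)$.  By Morita theory $A_\gamma$ decomposes as $2^{n-1}$ copies of the simple module $L$, and since $\{y+\gamma(y)\mid y\in L\}=L^\gamma$ in both characteristics, one obtains $\dim_F W=2\cdot 2^{n-1}=2^n$.

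Next I would verify that $x^2\in L^\gamma$ for $x\in W$, so that $q$ is well-defined.  From $A_\gamma\cdot A_\gamma\subseteq A_e=\C_A(L)$ and $\s(x^2)=\s(x)^2=x^2$ we have $x^2\in\Symm(\s)\cap\C_A(L)$, and this intersection equals $L$ in each of the three cases: trivially for $n=1$ since $\C_A(L)=L$; for $n=2$ because $\C_A(L)=LK$ with $K=\Z(A)$ and $\s|_K\neq\id_K$ forces $\Symm(\s)\cap LK=L$; for $n=3$ because $\C_A(L)$ is a quaternion $L$-algebra on which $\s$ restricts to the canonical involution, as verified by scalar extension to the split setting.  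Rewriting the defining relation as $xy=\gamma(y)x$ (substitute $\gamma(y)$ for $y$) and specialising to $y=x^2\in L$ gives $x\cdot x^2=\gamma(x^2)\cdot x$, which together with $x\cdot x^2=x^2\cdot x$ yields $(\gamma(x^2)-x^2)\cdot x=0$.  For invertible $x$ this forces $\gamma(x^2)=x^2$; for arbitrary $x\in W$ the conclusion extends by Zariski density, since the condition is polynomial in $x$ and invertible elements in $W$ are exhibited after scalar extension to the algebraic closure via the explicit split description of each case.

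The remaining assertions about $q$ I would handle by reduction to the split case: in each of the three situations a direct computation shows $q$ to be similar to a nondegenerate $F$-quadratic form of dimension $2^n$ built from quadratic norm forms or $2\times2$-determinants (for $n=3$, using the $\s$-orthogonal decomposition of the underlying vector space, $x^2$ becomes a pair of determinants $\det(f_{12})$ and $\det(f_{34})$).  For the isotropy criterion the backward direction is immediate: if $x\in W$ satisfies $x^2\in\mg{F}$, then $s(x^2)=x^2\cdot s(1)=0$ since $1\in\ker s$, and $x\neq0$ as $x^2\neq0$, so $q$ is isotropic.  For the forward direction, any isotropic $y\in W$ satisfies $y^2\in\ker s=F$; if $y^2\neq 0$ we are done, and otherwise one argues that the locus $\{y\in W\mid y^2=0\}$ has $F$-codimension at least two in $W$ (since the map $W\to L^\gamma$, $y\mapsto y^2$, has image $F$-spanning the two-dimensional space $L^\gamma$, as checked in the split case), while the isotropic hypersurface $\{q=0\}$ has codimension one, producing an isotropic vector with nonzero square.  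The main obstacle I anticipate is the verification that $\Symm(\s)\cap\C_A(L)=L$ in the symplectic case $n=3$, which requires identifying the type of the restricted involution on $\C_A(L)$ as canonical; all other steps reduce either to linear algebra over $L^\gamma$ or to routine computation in the split setting.
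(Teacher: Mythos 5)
Your argument takes a genuinely different route from the paper's, and some of it works, but there are real gaps in the two hardest parts of the lemma: the nondegeneracy of $q$ and the forward direction of the isotropy criterion.

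What works: the Galois-grading decomposition $A=\bigoplus_{g\in G}A_g$, the observation that $\sigma$ acts $\gamma$-semilinearly on $A_\gamma$, and the Morita-theoretic computation of $\dim_F W$ are all sound and give a cleaner, more uniform dimension count than the paper, which instead quotes a formula $\dim_FW=\tfrac18\dim_F A$ case by case from \cite{BGBT18a} and has to treat $K=L^\gamma$ a field and $K\simeq F\times F$ separately. Your verification that $x^2\in L^\gamma$ (via $x^2\in \Symm(\sigma)\cap\C_A(L)=L$, then the identity $\bigl(\gamma(x^2)-x^2\bigr)x=0$ and Zariski density over the irreducible affine space $W$) is also essentially correct. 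The backward isotropy direction is fine.

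Where it breaks: for nondegeneracy you write "by reduction to the split case \ldots a direct computation shows $q$ to be similar to a nondegenerate form." That is a statement of intent, not a proof, and it is exactly the point where the lemma has content (especially in characteristic two, where nondegeneracy of a quadratic form is a delicate condition involving both $\rad(q)$ and $\rad(b_q)$). Notice also that the Morita identification $A_\gamma\cong L^{2^{n-1}}$ is an isomorphism of $E$-modules, so it does \emph{not} transport the squaring map $x\mapsto x^2$ (which uses the algebra structure of $A$, not the module structure over $E$); thus the decomposition you set up does not by itself describe $q$. The paper sidesteps this by invoking the canonical nondegenerate quadratic form $c_2\colon V\to K$ from \cite[Proposition~4.6]{BGBT18a} defined on all of $V=\C_A(K)\cap\Symd(\sigma)$, with the properties $c_2|_L=\N_{L/K}$, $c_2(x)=-x^2$ on $W$, and $L\perp W$; then $q$ is the restriction of the nondegenerate transfer $\wt q=-s\circ c_2$ to the orthogonal complement of $L$ and nondegeneracy of $q$ is automatic, and the same tool yields the Witt-equivalence $\wt q\sim q$ used later in the paper.

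The forward isotropy direction also has a gap. Your plan is a codimension comparison between $\{q=0\}$ and $\{x^2=0\}$, which requires knowing that the projective quadric $\{q=0\}$ is irreducible. But in the very situation you are in (forward direction: $q$ is \emph{isotropic}), for $n=1$ the form $q$ is a $2$-dimensional isotropic, hence hyperbolic, form, and $\{q=0\}$ is a union of two lines — not irreducible. (The paper avoids this by noting that $c_2|_W\colon W\to K$, $x\mapsto x^2$ is a \emph{regular} $K$-quadratic form, so either $x^2\neq 0$ for all nonzero $x$ — which disposes of the problematic sub-case entirely when $\dim_KW=1$ — or, when isotropic and of $K$-dimension at least $2$, it is universal and one can directly find $x$ with $x^2=1$.) Even for $n\geq 2$, the claim that $\{x^2=0\}$ has codimension at least two in $W$ is asserted "as checked in the split case" without argument; for a quadratic map to a $2$-dimensional space the zero fiber can have codimension $1$, so this needs to be established.

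Finally, you should note that the paper gives a separate argument for the case $K=L^\gamma\simeq F\times F$ (via primitive idempotents); your Morita argument can in principle absorb that case uniformly, but then $E\cong M_2(F)\times M_2(F)$ has two non-isomorphic simple modules and you need to justify that $A_\gamma$ has equal multiplicity for each — this is where neatness of $L$ enters and your write-up does not address it.
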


\begin{proof}
We set $K=L^\gamma$, $C=\C_A(K)$, $\s_C=\s|_C$ and
$V=C\cap\Symd(\s)$. Note that $W$ is an $F$-subspace of $V$.
By \Cref{prop:maximal}, we have $\C_A(L)=L$.
For $x\in W$, it follows that $x^2\in\C_A(L)=L$ whereas $x\in C\setminus L$, whereby $x^2\in K[x]\cap L=K$.
Therefore $x\mapsto s(x^2)$ defines a quadratic form $q:W\to F$.
In order to prove that
$q$ is nondegenerate, we relate it to a nondegenerate quadratic form
$\wt{q}$ on $V$. We will see that $\wt{q}$ restricts to $q$ on
$W$ and is Witt equivalent to $q$. For the definition of $\wt{q}$, we consider
separately the cases where $K$ is a field and where $K$ is split.
 
Assume first that $K$ is a field.
Then $(C,\s_C)$ is a $K$-algebra with involution, which is of the same type as $(C,\s)$, in view of \cite[Proposition 4.12]{BOI}.
Therefore $\kap(C,\s_C)=2$ and $L$ is a  maximal neat
subalgebra of $(C,\s_C)$. 
We obtain by \cite[Proposition~4.6, Proposition~6.1 and
Proposition~6.5]{BGBT18a} that
\[
  V=\Symd(\s_C)=L\oplus W, \qquad\dim_FW={\textstyle\frac{1}8}\dim_FA,
\]
and that
there exists a canonical nondegenerate quadratic form
$c_2\colon V\to K$ with the following properties:
$W$ is the orthogonal complement of $L$, the restriction $c_2|_L$
is the norm form $\mathsf{N}_{L/K}$ of the quadratic \'etale extension $L/K$,
and $c_2(x)=-x^2$ holds for all $x\in W$.
It follows that the transfer $\wt{q}=-(s\circ c_2)\colon V\to
F$ is a 
nondegenerate quadratic form over $F$ such that $\wt{q}(x)=s(x^2)$ for
all $x\in W$, whereby $q$ is the restriction of $\wt{q}$ to
  $W$. Moreover, $W$ and $L$ are orthogonal to one another with
  respect to $\wt{q}$, and since $\dim_FV=\dim_FW+\dim_FL$, it follows
  that $W$ is  the orthogonal complement of $L$ in $V$ 
with respect to $\wt{q}$.
This implies that $q$ is nondegenerate and
\[
  \wt{q}= -(s\circ \mathsf{N}_{L/K}) \perp q.
\]
Since $L$ is a biquadratic $F$-algebra, the form $\mathsf{N}_{L/K}$ is
extended from a quadratic form defined over $F$. 
Hence $s\circ
\mathsf{N}_{L/K}$ is hyperbolic and $\wt{q}$ is Witt equivalent to $q$.

Since $s(1)=0$, it is clear that $q$ is isotropic whenever there
  exists 
$x\in W$ such that $x^2\in F^\times$. Conversely, assume that $q$ is isotropic.
Hence there exists $y\in W\setminus\{0\}$ such that $q(y)=0$, whereby
$y^2\in \ker(s)=F$. 
If $y^2\neq0$, then let $x=y$, and otherwise, the regular quadratic form
$c_2\rvert_W\colon W\to K$, $x\mapsto -x^2$ is isotropic and hence
universal, so we can find an element $x\in W$ with $x^2=1$.
This shows that $q$ is isotropic if and only if there exists $x\in W$
with $x^2\in\mg{F}$. 

Consider now the case where $K\simeq F\times F$.  We denote by $e_1$
and $e_2$ the two primitive idempotents of $K$.  Since $L$ is a
biquadratic \'etale $F$-algebra, there exists an $F$-automorphism
$\gamma'$ of $L$ which is of order $2$ and different from
$\gamma$. Then $\gamma'|_K\neq \id_K$, and it follows that $\gamma'$
interchanges $e_1$ and $e_2$.  Since $L=e_1L\oplus e_2L$, we conclude
by \cite[Lemma~5.8]{BGBT18a} that $K$ is neat in $(A,\s)$.  For $i=1$,
$2$ we set $E_i=e_iAe_i$ and denote by $\tau_i$ the restriction of
$\sigma$ to $E_i$.  Then $C=E_1\oplus E_2$ and the $F$-algebras $E_1$
and $E_2$ are isomorphic. Moreover, $(E_1,\tau_1)$ and $(E_2,\tau_2)$
are $F$-algebras with involution, and by~\cite[Proposition~3.1]{BGBT18a},
the involutions $\tau_1$ and $\tau_2$ have the same type as $\sigma$.
It follows that $\kap(E_1,\tau_1)=\kap(E_2,\tau_2)=2$.  For $i=1$,
$2$, since $e_iL$ is a quadratic \'etale $F$-subalgebra of $E_i$
contained in $\Symm(\tau_i)$ and with nontrivial automorphism
$\gamma|_{e_iL}$, we obtain by \cite[Proposition~4.6, Proposition~6.1 and
Proposition~6.5]{BGBT18a} that
\[
  e_iVe_i=e_iV=e_iL\oplus e_iW,\qquad
  \dim_Fe_iW={\textstyle\frac{1}{16}}\dim_FA,
\]
and that there exists a canonical nondegenerate 
quadratic form $c_{2,i}\colon e_iV\to e_iF$ with the following
properties: $e_iW$ is the orthogonal 
complement of $e_iL$, the restriction $c_{2,i}|_{e_iL}$ is the norm
form $\mathsf{N}_{e_iL/e_iF}$ of 
the quadratic \'etale  extension $e_iL/e_iF$, and $c_{2,i}(x)=-x^2$
holds for all $x\in e_iW$.
We consider the quadratic maps
$c_2\colon V\to K$, $x\mapsto c_{2,1}(e_1x)+c_{2,2}(e_2x)$ and
$\wt{q}=-(s\circ c_2)\colon V\to F$. The restriction of $c_2$ to
$L=e_1L\oplus e_2L$ is the norm form $\mathsf{N}_{L/K}$, and its restriction to
$W=e_1W\oplus e_2W$ is the map $W\to K,x\mapsto-x^2$.
The definition of $W$ implies that $W$ and $L$ are orthogonal to one
another with respect to $\wt{q}$. Since $\dim_FV=\dim_FW+\dim_FL$, we
obtain that 
\[
  \wt{q}=-(s\circ \mathsf{N}_{L/K})\perp q.
\]
Since $\ker(s)=F$, we have that $s(e_1)=-s(e_2)=\alpha$ for some $\alpha\in
F^\times$, whereby $\wt{q}\simeq-\alpha c_{2,1}\perp\alpha
c_{2,2}$. Therefore, the form $\wt{q}$ is nondegenerate, and the same
holds for $q$. Note further that  
$\dim_F W=\dim_Fe_1W+\dim_Fe_2W=\frac{1}8\dim_FA$ and that
the form $\mathsf{N}_{L/K}$ is extended from $F$.
We conclude that $s\circ \mathsf{N}_{L/K}$ is
hyperbolic and $q$ is Witt equivalent to $\wt{q}$.

Since $\ker(s)=F$, it is clear that $q$ is isotropic whenever there
exists $x\in W$ such that $x^2\in F^\times$. Conversely, if $q$ is
isotropic, we may find $w_1$, $w_2\in W$ with $(w_1,w_2)\neq (0,0)$
and $\lambda\in F$ such that 
$(e_iw_i)^2=e_i\lambda$ for $i=1,2$.
If $\lambda\neq0$, then for $x=e_1w_1+e_2w_2\in W$ we have
$x^2=\lambda\in F^\times$. If $\lambda=0$, then for $i=1,2$, the form
$c_{2,i}\rvert_{e_iW}$ is isotropic and
hence universal, so 
we may find $w_i'$ such that
$(e_iw_i')^2=e_i$, and  then $x=e_1w_1'+e_2w_2'\in W$ satisfies $x^2=1$.
\end{proof}

\begin{rem}
The construction of the form $\wt{q}$ in the proof of
\Cref{L:quaternionizer} goes back to \cite[Section~7]{GPT}, for the
case where $\car F\neq 2$ and $\s$ is symplectic. There it also turns
out, by a very different argument, that $\wt{q}$ contains a hyperbolic
plane and therefore is Witt equivalent to a smaller subform $q$. 
We will use in the sequel quite a different method than in \cite{GPT}
to determine the properties of $q$. 
\end{rem}

We define an operation $\ast:A\times A\lra A$ by letting
$$\hspace{2cm}x*y\,=\,xy+yx\quad\mbox{ for }\,\,x,y\in A\,.$$
(If $\car(F)\neq 2$, then $\ast$ corresponds up to a factor $2$ to the Jordan operation on~$A$.)
Note that for $x,y\in\Symm(\sigma)$ we have $\sigma(xy)=yx$ and thus
$x*y\in\Symd(\sigma)$.
              
\begin{thm}\label{L:mults}
Assume that $\kap(A,\s)=4$ and let $n=\log_2\dim_FA-3$.
Let $L$ be a biquadratic neat $F$-subalgebra of $(A,\s)$ and let $G=\{\id_L,\gamma_1,\gamma_2,\gamma_3\}$ be the Galois group of $L/F$.
For $1\leq i\leq 3$, we set $$W_i=\{x\in \Symd(\s)\mid yx=x\gamma_i(y)\mbox{ for all }y\in L\}\,.$$
There exists a quadratic $n$-fold Pfister form $\pi$ over $F$ 
and, for $1\leq i\leq 3$, an $F$-linear form $s_i:L^{\gamma_i}\to F$ with $\ker(s_i)=F$
such that $q_i:W_i\lra F,x\mapsto s_i(x^2)$ is a quadratic form over $F$ similar to $\pi$.
The Pfister form $\pi$ is uniquely determined by $L$.
Furthermore, we have
$$\Symd(\s)=L\oplus W_1\oplus W_2\oplus W_3\,.$$
%\nmodif{and, for $x_1\in W_1$ and $x_2\in W_2$, we have  $x_1*x_2\in W_3$, $x_1x_2x_1x_2+x_2x_1x_2x_1\in F$  and $\varphi_3(x_1*x_2)=\varphi_1(x_1)\varphi_2(x_2)$.
\end{thm}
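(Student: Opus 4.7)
\emph{The plan.} The approach is to exploit the $G$-grading on $A$ induced by the action of $L\otimes_F L$ via $(y\otimes z)\cdot a=yaz$, and then to analyze how $\sigma$ and the multiplication in $A$ interact with this grading. Since $L/F$ is a Galois $F$-algebra with group $G=\{\id_L,\gamma_1,\gamma_2,\gamma_3\}\cong(\zz/2\zz)^2$, one has $L\otimes_F L\simeq\prod_{\gamma\in G}L$ through orthogonal idempotents, yielding on $A$ the $G$-grading $A=\bigoplus_{\gamma\in G}A_\gamma$ with $A_\gamma=\{x\in A\mid yx=x\gamma(y)\text{ for all }y\in L\}$.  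Since $L$ is neat in $(A,\s)$, $A$ is free as an $L$-module of rank $2^{n+1}$, so each $A_\gamma$ has $F$-dimension $2^{n+1}$.  Using $\sigma|_L=\id_L$ and the fact that every element of $G$ has exponent dividing~$2$, a direct computation shows $\sigma(A_\gamma)=A_\gamma$, whence $\Symd(\sigma)=\bigoplus_{\gamma\in G}(\Symd(\sigma)\cap A_\gamma)$; the components for $\gamma\neq\id_L$ are by definition $W_1$, $W_2$, $W_3$.  Since $1\in\Symd(\sigma)$ forces $\Symm(\sigma)=\Symd(\sigma)$, Proposition~\ref{P:Syms-dim} gives $\dim_F\Symd(\sigma)=4+3\cdot 2^n$, and combined with $\dim_FW_i=2^n$ from Lemma~\ref{L:quaternionizer} this forces the identity component to have dimension $4=\dim_FL$.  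Together with $L\subseteq\Symd(\sigma)\cap A_{\id_L}$, this yields the decomposition $\Symd(\sigma)=L\oplus W_1\oplus W_2\oplus W_3$.

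For the existence of the Pfister form, the key is a composition formula relating the three $q_i$.  Given $w_i\in W_i$ and $w_j\in W_j$ with $i\neq j$, the commutativity of $G$ gives $w_iw_j\in A_{\gamma_i\gamma_j}=A_{\gamma_k}$ (where $\{i,j,k\}=\{1,2,3\}$), and $\sigma(w_iw_j)=w_jw_i$, so $\beta_{ij}(w_i,w_j):=w_iw_j+w_jw_i$ lies in $W_k$; this defines a symmetric bilinear map $\beta_{ij}\colon W_i\times W_j\to W_k$.  Expanding $\beta_{ij}(w_i,w_j)^2\in L^{\gamma_k}$ by means of the commutation rule $ya=a\gamma(y)$ for $a\in A_\gamma$, $y\in L$, together with $w_i^2\in L^{\gamma_i}$ and $w_j^2\in L^{\gamma_j}$, one separates the square into a \emph{cross} piece $\gamma_i(w_j^2)w_i^2+\gamma_j(w_i^2)w_j^2$ and a \emph{square} piece $(w_iw_j)^2+(w_jw_i)^2$, both lying in $L^{\gamma_k}$.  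Making a simultaneous choice of the linear forms $s_1$, $s_2$, $s_3$ compatible with a uniform $F$-basis of $L$, the square piece projects to zero and the projection of the cross piece yields a composition identity of the form $q_k(\beta_{ij}(w_i,w_j))=c\cdot q_i(w_i)\,q_j(w_j)$ with a nonzero constant $c\in F^\times$.

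A regular quadratic form of dimension $2^n$ admitting such a composition is round and therefore similar to an $n$-fold quadratic Pfister form, by the classical theory of multiplicative quadratic forms; the composition identity then forces $q_1$, $q_2$, $q_3$ to be similar to one and the same Pfister form $\pi$, uniquely determined by $L$.  Uniqueness is automatic: two quadratic Pfister forms of the same dimension similar to the same quadratic form are isomorphic, as a Pfister form is preserved up to isomorphism under scaling by any of its represented nonzero values.

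The main obstacle is the composition identity.  One has to exhibit a uniform choice of the linear forms $s_1$, $s_2$, $s_3$ that kills the square contribution $s_k\bigl((w_iw_j)^2+(w_jw_i)^2\bigr)$, so that only the bilinear cross term survives; this must work in all three cases $n\in\{1,2,3\}$ and in every characteristic, most delicately when $n=3$ and $\car F=2$.  A convenient alternative to a direct computation is to exploit the canonical quadratic form $c_2\colon\Symd(\s|_{C_A(L^{\gamma_i})})\to L^{\gamma_i}$ from the proof of Lemma~\ref{L:quaternionizer}: since each of the three centralizers $C_A(L^{\gamma_i})$ contains the same biquadratic algebra $L$, and the restriction of $c_2$ to $L$ is in each case the norm form of $L$ over $L^{\gamma_i}$, the three canonical forms are tightly interrelated, and this interrelation packages precisely the required composition without any ad hoc choice of basis.
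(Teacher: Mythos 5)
Your $G$-grading argument for the direct sum decomposition is a clean alternative to the paper's: the paper instead fixes a trace-one element $a\in L^{\gamma_1}$ and uses the invertibility of $2a-1$ to separate $(L+W_1)\cap(W_2+W_3)=0$, then symmetrizes; your dimension count against the graded pieces accomplishes the same thing. (One small slip: $1\in\Symd(\sigma)$ does \emph{not} force $\Symm(\sigma)=\Symd(\sigma)$ when $\car F=2$ and $\sigma$ is symplectic; what it gives is $\Syms(\sigma)=\Symd(\sigma)$, and Proposition~\ref{P:Syms-dim} applies to $\Syms$.)

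The genuine gap is exactly the step you flag as "the main obstacle," namely proving the composition identity. You write that a "simultaneous choice of the linear forms $s_1, s_2, s_3$ compatible with a uniform $F$-basis of $L$" will kill the square piece $(w_iw_j)^2+(w_jw_i)^2$, but this misidentifies what has to be shown. The paper proves the stronger, choice-independent fact that $(w_iw_j)^2+(w_jw_i)^2\in F$; since $F=\ker(s_k)$ by hypothesis, the vanishing under $s_k$ is then automatic, and the only role of adjusting $s_3$ is to normalize the nonzero constant $c=s_3(\gamma_1(u_1u_2)+\gamma_2(u_1u_2))$ to $1$. The proof that the square piece lies in $F$ is the crux: the paper first observes $xyxy+yxyx\in L^{\gamma_3}$ (as $(x*y)^2$ and the cross terms both lie in $L^{\gamma_3}$), then right-multiplies by $x$ and uses the commutation relations $yxyx^2=x^2yxy$ to get $(\gamma_1(z)-z)\cdot x=0$ for $z=xyxy+yxyx$, and finally invokes that in the \'etale quadratic algebra $L^{\gamma_3}$ the element $\gamma_1(z)-z$ is either zero or invertible, forcing $z\in L^{\gamma_1}\cap L^{\gamma_3}=F$. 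None of this appears in your proposal, and your fallback suggestion via the canonical quadratic form $c_2$ is a one-sentence sketch, not an argument; absent a proof that the square piece lies in $F$, the composition $\ast:(W_1,q_1)\times(W_2,q_2)\to(W_3,q_3)$ is not established, and with it the entire Pfister-form conclusion.
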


\begin{proof}
Let $i\in\{1,2,3\}$.
We set
$L_i=L^{\gamma_i}$, which is a quadratic neat $F$-subalgebra of $(A,\s)$.
We denote by $\Tr_i\colon L_i\to F$ its trace map and set $\Delta_i=\ker(\Tr_i)$.
Note that $\Delta_i$ is a $1$-dimensional $F$-subspace of $L_i$, which coincides with $F$ when $\car(F)=2$. 
The trace map $F\times F\to F$ is simply given by the addition in $F$.
Since $L_i$ is either a field or isomorphic to $F\times F$, we see that $\Delta_i\setminus\{0\}\subseteq \mg{L_i}$.
By \Cref{L:quaternionizer}, we have $\dim_F W_i=2^n$ and,  for every $x\in W_i$, we have $x^2\in L_i$ and consequently $\Tr_i(x^2)-2x^2\in\Delta_i$. This defines a map $$\vf_i:\ W_i\to \Delta_i,\ x\mapsto \Tr_i(x^2)-2x^2\,.$$
We will use various times in the sequel that, for distinct $i,j\in\{1,2,3\}$, $\gamma_j$ restricts to the nontrivial $F$-automorphism on $L_i$ and hence $\Tr_i=\id_{L_i}+\gamma_{j}|_{L_i}$.

A major step in the proof will now consist in showing that, for all $x\in W_1$ and $y\in W_2$, we have $xy,yx,x\ast y\in W_3$ and $$\vf_3(x\ast y) = \vf_1(x)\cdot \vf_2(y)\,.$$

Let $x\in W_1$ and $y\in W_2$.
Since $\gamma_3=\gamma_2\circ\gamma_1$, for $z\in L$ arbitrary, we have 
$$z(xy)=zxy=x\gamma_1(z)y=
xy\gamma_2(\gamma_1(z))=xy\gamma_3(z).$$
This shows that $xy\in W_3$, and in the same way we obtain that $yx\in W_3$.
Therefore $(xy)^2,(yx)^2\in L_3$ and in particular $xyxy+yxyx\in L_3$.
Furthermore $x\ast y\in W_3$, and in particular $(x*y)^2\in L_3$.
Since $x\in W_1$, we have
  \[
  \gamma_1(xyxy+yxyx)\cdot x = x\cdot(xyxy+yxyx) = x^2yxy+xyxyx.
  \]
  Since $y\in W_2$, $yx\in W_3$, $\gamma_3\circ\gamma_2=\gamma_1$ and $x^2\in L_1$, we have
  \[
  yxyx^2=(yx)\gamma_2(x^2)y=\gamma_3(\gamma_2(x^2))yxy=x^2yxy,
  \]
  hence
  \[
  \gamma_1(xyxy+yxyx)\cdot x = yxyx^2+xyxyx = (xyxy+yxyx)\cdot x
  \]
  and therefore
  \[
  \bigl(\gamma_1(xyxy+yxyx)-(xyxy+yxyx)\bigr)\cdot x=0.
  \]
Now, in any \'etale quadratic algebra with nontrivial
  automorphism $\iota$, the elements of the form $\iota(z)-z$ are
  invertible if they are nonzero. Using this for $L_3$, the last equation yields that
  \[
  \gamma_1(xyxy+yxyx)=xyxy+yxyx.
  \]
This shows that 
$xyxy+yxyx\in L_1\cap L_3=F$ and in particular, $$\Tr_3(xyxy+yxyx)=2(xyxy+yxyx)\,.$$
Since
  \begin{eqnarray*}
    (x*y)^2 & =&\,(xyxy+yxyx) + xy^2x + yx^2y\\
    & = &\,(xyxy+yxyx) + x^2\gamma_1(y^2)+\gamma_2(x^2)y^2\, ,
 %   & = &\,(xyxy+yxyx) + \gamma_1(x^2y^2) + \gamma_2(x^2y^2)\,,
  \end{eqnarray*}
 it follows that $x^2\gamma_1(y^2)+\gamma_2(x^2)y^2\in L_3$ and
\begin{eqnarray*}
\vf_3(x\ast y) & = & \Tr_3(x^2\gamma_1(y^2)+\gamma_2(x^2)y^2) - 2 (x^2\gamma_1(y^2)+\gamma_2(x^2)y^2)\\
               & = & x^2y^2 + \gamma_2(x^2)\gamma_1(y^2) -x^2 \gamma_1(y^2) -\gamma_2(x^2)y^2)\\
               & = & (\gamma_2(x^2) - x^2)(\gamma_1(x^2) - y^2)\,.
\end{eqnarray*}
Since $\vf_1(x)=\Tr_1(x^2)-2x^2 = \gamma_2(x^2)-x^2$ and similarly $\vf_2(y)=\gamma_1(y^2)-y^2$, 
we thus have shown that 
\begin{eqnarray*}
\vf_3(x\ast y) & = & \vf_1(x) \cdot \vf_2(y)\,.
\end{eqnarray*}

We will now use this formula to show that, for a good choice of the linear forms $s_i:L_i\to F$,
we obtain a composition formula for the induced quadratic forms $q_i:W_i\to F,x\mapsto s_i(x^2)$, where $1\leq i\leq 3$.

For $i=1$, $2$, we fix some $\delta_i\in\Delta_i\setminus\{0\}$, and we set $\delta_3=\delta_1\delta_2$.
Since $\delta_1,\delta_2\in\mg{L}$ we have $\delta_3\in\mg{L}$, and since $\gamma_2\circ \gamma_1=\gamma_3=\gamma_1\circ\gamma_2$, we have $\delta_3 \in L_3$.
Since $\gamma_1$ restricts to the nontrivial $F$-automorphism on $L_i$ for $i=2,3$, we obtain that 
$$\Tr_3(\delta_3) =\delta_1\delta_2 + \gamma_1(\delta_1\delta_2)=\delta_1\Tr_2(\delta_2)=0\,.$$
Hence $\delta_3\in\Delta_3\setminus\{0\}$.  

Let $i\in\{1,2,3\}$. Since $\dim_F\Delta_i=1$, we have $\Delta_i = F\delta_i$.
We may now define 
the $F$-linear form $s_i: L_i\to F,x\mapsto (\Tr_i(x)-2x)\delta_i^{-1}$ to obtain that $\ker(s_i)=F$, because for $x\in L_i$ we have
  $\Tr_i(x)=2x$ if and only if $x\in F$. 
By \Cref{L:quaternionizer}, $q_i:W_i\to F, x\mapsto s_i(x^2)$ is a nongenerate quadratic form over $F$, and by  the choice of $s_i$, we have
  \[
    q_i(x)=(\Tr_i(x^2)-2x^2)\delta_i^{-1}=\varphi_i(x)\delta_i^{-1}
    \qquad\text{for $x\in W_i$.}
  \]
 Now, for $x\in W_1$ and $y\in W_2$, since 
$\varphi_3(x*y)=\varphi_1(x)\cdot\varphi_2(y)$ holds, we find that
$$
    q_3(x*y) = \varphi_3(x*y)\delta_3^{-1} =
    \varphi_1(x)\delta_1^{-1} \cdot \varphi_2(y)\delta_2^{-1}
    =q_1(x)\cdot q_2(y).
$$
We have thus shown that 
$$\ast\colon (W_1,q_1)\times (W_2,q_2)\lra (W_3,q_3)$$ 
is a composition of nondegenerate quadratic forms
in the sense of~\cite[p.~488]{BOI}.
As these forms are of dimension $2^n$,
it follows by \cite[(33.18) and (33.27)]{BOI} that
they are similar to one and the same quadratic $n$-fold Pfister form
$\pi$ over $F$. 

Since $\pi$ is a Pfister form, it is uniquely determined up to
isometry by its similarity class, which is also given by each of the
forms $q_1$, $q_2$ and $q_3$.  Since the linear forms
$s_i\colon L^{\gamma_i}\to F$ for $i=1$, $2$, $3$ are determined up to
a scalar by the property that $\ker(s_i)=F$, it follows that $\pi$
does also not depend on the choices of $s_1$, $s_2$, $s_3$. Therefore
$\pi$ is determined up to isometry by $L$.

We have $\dim_F\Symd(\s)=4+3\cdot 2^n=\dim_FL+\dim_FW_1+\dim_FW_2+\dim_FW_3$ and $L+W_1+W_2+W_3\subseteq\Symd(\s)$.
Hence, to prove finally that
$$\Symd(\s)=L\oplus W_1\oplus W_2\oplus W_3\,,$$ we only need to show that the sum 
$L+W_1+W_2+W_3$ is direct.

We fix an element $a\in L_1\setminus F$ with $\Tr_1(a)=1$. 
Then $2a-1\in \Delta_1\setminus\{0\}$ and hence
$(2a-1)^2\in\mg{F}$.
We further have $\gamma_2(a)=\gamma_3(a)=1-a$.
Hence for any $x\in L+W_1$ we have $ax+xa=2ax$, whereas for any $x\in W_2+W_3$ we obtain that  $ax+xa=x$.
Since $2a-1\in\mg{L}$ it follows that $(L+W_1)\cap (W_2+W_3)=0$.
In the same way we obtain that $(L+W_2)\cap (W_1+W_3)=0$.
Together this implies that the sum $L+W_1+W_2+W_3$ is direct.
 \end{proof}

\begin{rem}
If $\car(F)=2$, then in the proof of \Cref{L:mults},  for $1\leq i\leq 3$, we have $\ker(\Tr_i)=F$ and we may thus take $\delta_i=1$, whereby $s_i=\Tr_i$.
\end{rem}

For a biquadratic neat  $F$-subalgebra $L$ of $(A,\s)$,  we denote by
$\Pf_{\s,L}$ the quadratic Pfister form over $F$ which is
characterised in \Cref{L:mults}.
We will see later that this Pfister form does actually not depend on
the choice of $L$. It is clearly functorial:

\begin{prop}\label{P:biquat-Pfi-functorial}
Assume that $\kap(A,\s)=4$.
Let $L$ be a biquadratic neat  $F$-subalgebra of $(A,\s)$ and let
$F'/F$ be a field extension. 
Then $L\otimes F'$ is a biquadratic neat  $F'$-subalgebra of $(A,\s)_{F'}$
and 
the associated Pfister form $\Pf_{\s_{F'},L\otimes F'}$ is obtained by
scalar extension to $F'$ from $\Pf_{\s,L}$.
\end{prop}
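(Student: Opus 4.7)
The plan is to verify that every ingredient in the construction of $\Pf_{\s,L}$ carried out in \Cref{L:mults} commutes with the scalar extension from $F$ to $F'$, and then invoke the fact that an $n$-fold Pfister form is determined up to isometry by its similarity class.

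First I would check that $L\otimes F'$ is a neat biquadratic $F'$-subalgebra of $(A,\s)_{F'}$. The decomposition $L\simeq L_1\otimes L_2$ into quadratic $F$-subalgebras transfers to $L\otimes F'\simeq (L_1\otimes F')\otimes_{F'}(L_2\otimes F')$, so $L\otimes F'$ is again biquadratic; the étale property, the inclusion in $\Symm(\s_{F'})$, and the freeness of $A\otimes F'$ as a left $(L\otimes F')$-module are all preserved by tensoring with $F'$ over $F$. Moreover, the Galois group $G=\{\id_L,\gamma_1,\gamma_2,\gamma_3\}$ of $L/F$ acts on $L\otimes F'$ via $\gamma_i\otimes\id_{F'}$ and realises the Galois group of $L\otimes F'$ over $F'$, with fixed subalgebras $(L\otimes F')^{\gamma_i\otimes\id_{F'}}=L^{\gamma_i}\otimes F'$.

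Next, for each $i\in\{1,2,3\}$, the $F$-subspace $W_i\subseteq\Symd(\s)$ is the solution set in $\Symd(\s)$ of the system of $F$-linear equations $yx-x\gamma_i(y)=0$ with $y$ running over an $F$-basis of $L$. Since $\Symd(\s)\otimes F'=\Symd(\s_{F'})$ and the solution set of a system of $F$-linear equations in a finite-dimensional $F$-space base-changes to the solution set of the same system over $F'$, I obtain $W_i\otimes F'=W_i'$, where $W_i'$ denotes the analogous subspace used in the construction of $\Pf_{\s_{F'},L\otimes F'}$. The $F$-linear form $s_i\colon L^{\gamma_i}\to F$ with $\ker s_i=F$ extends to $s_i\otimes\id_{F'}\colon L^{\gamma_i}\otimes F'\to F'$ with kernel $F'$, hence it may serve as the linear form used in the construction over $F'$. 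The squaring map and the involution are compatible with scalar extension, so the quadratic form $q_i'\colon W_i'\to F'$, $x\mapsto(s_i\otimes\id_{F'})(x^2)$ is exactly the scalar extension $q_i\otimes F'$.

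Finally, applying \Cref{L:mults} over $F'$, the form $\Pf_{\s_{F'},L\otimes F'}$ is the unique $n$-fold Pfister form similar to $q_i'=q_i\otimes F'$; on the other hand $\Pf_{\s,L}\otimes F'$ is similar to $q_i\otimes F'$ since $\Pf_{\s,L}$ is similar to $q_i$ over $F$. As two $n$-fold Pfister forms that are similar are actually isometric (by the roundness of Pfister forms), we conclude $\Pf_{\s_{F'},L\otimes F'}\simeq\Pf_{\s,L}\otimes F'$. The main point requiring a little care is the identification $W_i\otimes F'=W_i'$ inside $\Symd(\s_{F'})$, but this is a standard flat base change argument; no genuine obstacle is expected, since the whole construction is visibly functorial in the base field.
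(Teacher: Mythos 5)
Your proof is correct and follows essentially the same route as the paper: the paper cites a result from a companion article for the neatness of $L\otimes F'$ and then asserts that the compatibility of $\Pf_{\s,L}$ with scalar extension "follows directly from the definitions and from \Cref{L:mults}", which is precisely the verification you carry out (base change of the $W_i$, of the $s_i$, of the quadratic forms $q_i$, followed by the fact that similar Pfister forms are isometric).
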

Recall in this context that all tensor products are taken over $F$.
\smallskip

\begin{proof}
See \cite[Proposition~5.5]{BGBT18a} for the fact that $L\otimes F'$
is neat in $(A,\s)_{F'}$. 
The remaining part follows directly from the definitions and from
\Cref{L:mults}. 
\end{proof}

We next give a criterion for the hyperbolicity of $\Pf_{\s,L}$.

\begin{prop}\label{P:Pf-dec-equiv}
Assume that $\kap(A,\s)=4$.  
Let $L$ be a biquadratic neat  $F$-subalgebra of $(A,\s)$.
Let $K$ be a quadratic neat  $F$-subalgebra of $(A,\s)$ contained in $L$. The following conditions are equivalent:
\begin{enumerate}[$(i)$]
\item
  $\Pf_{\s,L}$ is hyperbolic.
\item
  $(A,\s)$ is decomposable along $L$.
\item
  $(A,\s)$ is decomposable along $K$.
\end{enumerate}
\end{prop}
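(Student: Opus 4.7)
The plan is to prove the three statements equivalent via the cycle $(i)\Rightarrow(ii)\Rightarrow(iii)\Rightarrow(i)$, built on the decomposition $\Symd(\s)=L\oplus W_1\oplus W_2\oplus W_3$ from \Cref{L:mults} and the isotropy criterion of \Cref{L:quaternionizer}. Writing $K_j:=L^{\gamma_j}$ for $j=1,2,3$ and relabelling if necessary, we may assume $K=K_1$, so that both $\gamma_2$ and $\gamma_3$ restrict to the nontrivial $F$-automorphism of $K$.

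For $(i)\Rightarrow(ii)$, hyperbolicity of $\Pf_{\s,L}$ forces the similar form $q_2$ to be isotropic, so by \Cref{L:quaternionizer} we can pick $x\in W_2$ with $x^2\in\mg{F}$; since $\gamma_2|_K$ is nontrivial, $x$ anti-commutes with $K$ (in the sense that $yx=x\gamma_2(y)$ for $y\in K$), and $Q_1:=K\oplus Kx$ is a $\s$-stable $F$-quaternion subalgebra with $\s|_{Q_1}$ orthogonal and $Q_1\cap L=K$. The centralizer $C=\C_A(Q_1)$ has capacity $2$, and an element $y\in L$ commutes with $x$ iff $\gamma_2(y)=y$, so $L\cap C=K_2$; applying \cite[Corollary 6.6]{BGBT18a} to $K_2\subseteq(C,\s|_C)$ yields a $\s$-stable $F$-quaternion $Q_2\subseteq C$ containing $K_2$, so $(Q_1,Q_2)$ decomposes $(A,\s)$ along $L$. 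For $(ii)\Rightarrow(iii)$, given a decomposition $(Q_1,Q_2)$ of $(A,\s)$ along $L$ with $Q_i\cap L=K_i'$, the $K_i'$ account for two of the three quadratic subalgebras of $L$; if $K$ is one of them we are done, otherwise (in characteristic different from $2$) choosing $\s$-symmetric generators $\epsilon_i\in K_i'$ with $\epsilon_i^2\in\mg{F}$ and $\s$-symmetric anti-commuting partners $x_i\in Q_i$ with $x_i^2\in\mg{F}$, the elements $\epsilon_1\epsilon_2$ and $x_1\epsilon_2$ are $\s$-symmetric, anti-commute, and square into $\mg{F}$, generating a $\s$-stable $F$-quaternion subalgebra of $Q_1\otimes Q_2$ containing $K=F[\epsilon_1\epsilon_2]$ (an analogous construction works in characteristic $2$).

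The main obstacle is the implication $(iii)\Rightarrow(i)$, where one must extract hyperbolicity of $\Pf_{\s,L}$ from the existence of a single $\s$-stable quaternion $Q\supseteq K$ that is a priori not compatible with the $W_i$'s. The approach is to pick $x\in\Symm(\s|_Q)\setminus K$ with $x^2\in\mg{F}$ satisfying $yx=x\bar y$ for $y\in K$, then to decompose $x=\ell+w_1+w_2+w_3$ according to \Cref{L:mults} and exploit this relation. A direct computation in the direct sum $\Symd(\s)=L\oplus W_1\oplus W_2\oplus W_3$ shows that the $W_2$- and $W_3$-contributions to $yx-x\bar y$ cancel automatically, leaving $(y-\bar y)(\ell+w_1)=0$ in $L\oplus W_1$; since $y-\bar y$ is invertible in $L$ for $y\in K\setminus F$, we obtain $\ell=w_1=0$, so $x\in W_2\oplus W_3$. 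Squaring then gives $x^2=(w_2^2+w_3^2)+(w_2\ast w_3)$ with $w_i^2\in L^{\gamma_i}$ and $w_2\ast w_3\in W_1$ by \Cref{L:mults}; since $x^2\in\mg{F}\subseteq L$, the $W_1$-component vanishes and $w_2^2+w_3^2\in F$. The two one-dimensional subspaces $L^{\gamma_2}/F$ and $L^{\gamma_3}/F$ of $L/F$ are distinct (as $L^{\gamma_2}\cap L^{\gamma_3}=F$), so the vanishing of the sum in $L/F$ forces $w_i^2\in F$ for $i=2,3$. Since $x\neq 0$, at least one of $w_2,w_3$ is nonzero and yields $q_i(w_i)=s_i(w_i^2)=0$ (recall $\ker s_i=F$); thus $\Pf_{\s,L}$, being similar to the isotropic form $q_i$, is itself isotropic and hence hyperbolic.
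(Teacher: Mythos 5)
Your proof is correct and runs the same cycle $(i)\Rightarrow(ii)\Rightarrow(iii)\Rightarrow(i)$, and your $(i)\Rightarrow(ii)$ is essentially the paper's (working with $q_2,W_2$ instead of $q_1,W_1$, an inessential relabelling). The genuinely different step is $(iii)\Rightarrow(i)$: the paper's argument is indirect, recalling the auxiliary form $\wt q=-s\circ c_2$ on $\C_A(K)\cap\Symd(\s)$ from the proof of \Cref{L:quaternionizer} and observing that $(iii)$ gives $\C_A(K)=K\otimes B$, which makes $c_2$ extended from $F$ and hence its transfer along $s$ hyperbolic. You instead expand a quaternion generator $x$ directly in the decomposition $\Symd(\s)=L\oplus W_1\oplus W_2\oplus W_3$ of \Cref{L:mults}, force $\ell=w_1=0$ from the commutation relation, and then use $x^2\in\mg F$ together with $L^{\gamma_2}\cap L^{\gamma_3}=F$ to produce an explicit isotropic vector in $W_2$ or $W_3$. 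This is more elementary and self-contained: it bypasses the transfer and the canonical form $c_2$ entirely, and it makes visible exactly how a $\s$-stable quaternion containing $K$ sits across the $W_i$'s. You also spell out $(ii)\Rightarrow(iii)$, which the paper dismisses as ``obvious''; since $K$ need not coincide with either $Q_i\cap L$, your explicit construction of a third quaternion inside $Q_1\otimes Q_2$ containing $K$ is a legitimate addition.

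Two points should be tightened in $(iii)\Rightarrow(i)$. First, to expand $x$ via \Cref{L:mults} you need $x\in\Symd(\s)$, whereas you only have $x\in\Symm(\s|_Q)\subseteq\Symm(\s)$; these differ when $\car F=2$ and $\s$ is symplectic. The gap closes easily: letting $B=\C_A(Q)$ and $\s_B=\s|_B$, the involution $\s_B$ is symplectic, so there is $b\in B$ with $b+\s_B(b)=1$, and then $xb+\s(xb)=x\bigl(b+\s_B(b)\bigr)=x$, giving $x\in\Symd(\s)$; but you should say so. Second, the existence of $x\in\Symm(\s|_Q)\setminus K$ with $x^2\in\mg F$ and $yx=x\bar y$ for all $y\in K$ is asserted without proof; a short justification (Skolem--Noether gives $j\in\mg Q$ with $jy=\bar y j$ for $y\in K$, then $\s(j)=\lambda j$ with $\lambda\bar\lambda=1$, and Hilbert~90 lets one replace $j$ by a $\mg K$-multiple that is $\s$-fixed with square in $\mg F$) would complete the argument.
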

\begin{proof}
Let $G$ be the Galois group of $L/F$.
Since $K$ and $L$ are neat in $(A,\s)$, it follows that 
$L$ is free as a $K$-module and hence $K$ is the fixed subalgebra of some element of $G$.
We write $G=\{\id_L,\gamma_1,\gamma_2,\gamma_3\}$ with $K=L^{\gamma_1}$.
For $i=1$, $2$, $3$ we set
$W_i=\{x\in \Symd(\s)\mid yx=x\gamma_i(y)\mbox{ for all }y\in L\}$ and
fix an $F$-linear form $s_i\colon L^{\gamma_i}\to F$ with
$\ker(s_i)=F$, and we consider the quadratic form $q_i\colon W_i\to
F$, $x\mapsto s_i(x^2)$. 
Then $q_1$, $q_2$ and $q_3$ are similar to the Pfister form
$\Pf_{\s,L}$. 
Hence $\Pf_{\s,L}$ is hyperbolic if and only if any of the forms
$q_1$, $q_2$ or $q_3$ is isotropic, and in this case they are all
isotropic. 

$(i\Rightarrow ii)$: Suppose that $\Pf_{\s,L}$ is hyperbolic. Then $q_1$ is isotropic, and by \Cref{L:quaternionizer}, there exists an element $x\in W_1$ with
$x^2\in\mg{F}$. 
Then $Q_2=L^{\gamma_2}\oplus L^{\gamma_2}x$ is a $\s$-stable $F$-quaternion algebra contained in $(A,\s)$. 
Since $\s$ is the identity on $L^{\gamma_2}$, the involution
$\s\rvert_{Q_2}$ is orthogonal. 
Let $A'=\C_A(Q_2)$ and $\s'=\s|_{A'}$.
It follows that $(A',\s')$ is an $F$-algebra with involution with
$\kap(A',\s')=2$. 
Since $K=L^{\gamma_1}\subseteq A'$, it follows by \cite[Corollary 6.6]{BOI}
that $K$ is contained in a $\s$-stable $F$-quaternion subalgebra $Q_1$ of $A'$, and hence of $A$. 
Hence $Q_1$ and $Q_2$ are independent $\s$-stable $F$-quaternion subalgebras of $A$ such that  
$L=(L\cap Q_1)\cdot(L\cap Q_2)$. Therefore $(A,\s)$ is decomposable
along $L$.

$(ii\Rightarrow iii)$: This implication is obvious.

$(iii\Rightarrow i)$: 
Recall the quadratic form $\wt{q}\colon \C_A(K)\cap\Symd(\s)\to F$
defined in the 
proof of \Cref{L:quaternionizer} as a transfer with respect to an $F$-linear form
$s\colon K\to F$. This form $\wt{q}$ is Witt equivalent to
$q_1$ and hence similar to $\Pf_{\s,L}$.
Hence to prove $(i)$ it suffices to show that $\wt{q}$
is hyperbolic. Condition~$(iii)$ yields a
decomposition $A=Q\otimes B$ with a $\sigma$-stable $F$-subalgebra $B$ of $A$ and a $\sigma$-stable $F$-quaternion subalgebra of $A$ containing $K$.
For $C=\C_A(K)$ we obtain that
\[
  C=K\otimes B.
\]
Let $\s_B=\s\rvert_B$. Since $\s$ is the identity on $K$, the
involution $\s\rvert_Q$ is orthogonal.
Hence $\s_B$ has the same type as $\s$ and $\kap(B,\s_B)=2$. 
The quadratic form $c_2$ on
$\Symd(\s_C)$  over $K$ given by \cite[Proposition~4.6]{BGBT18a} is extended from the corresponding quadratic form $c_2$
on $\Symd(\s_B)$ over $F$, hence its transfer with respect to $s\colon K\to F$ is hyperbolic. 
Since $\wt{q}=-s\circ c_2$, it follows that $(i)$ holds.
\end{proof}

\begin{cor}\label{C:cap4-hyp-decalong}
Assume that $\kap(A,\s)=4$.
If $\s$ is hyperbolic and $\exp A\leq 2$,
then $\Pf_{\s,L}$ is hyperbolic for every biquadratic neat  $F$-subalgebra
$L$ of $(A,\s)$. 
\end{cor}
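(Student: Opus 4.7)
The plan is to derive this corollary directly from the two preceding results, namely Proposition \ref{T:hyp-quad-factor} and the equivalence $(i)\Leftrightarrow(iii)$ in Proposition \ref{P:Pf-dec-equiv}. The point is that the construction of $\Pf_{\s,L}$ and its characterisation through decomposability along a quadratic subalgebra reduces the question to the existence of a single $\s$-stable quaternion algebra meeting $L$ in a quadratic subfield, and this is exactly what \Cref{T:hyp-quad-factor} provides for every neat quadratic subalgebra under the hypotheses $\kap(A,\s)=4$, $\exp A\leq 2$, and $\s$ hyperbolic.

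First, I would fix a neat biquadratic $F$-subalgebra $L$ of $(A,\s)$ and pick any quadratic $F$-subalgebra $K$ of $L$ (there are exactly three such $K$, corresponding to the nontrivial elements of the Galois group of $L/F$). Since $L$ is biquadratic, $L$ is free as a $K$-module; moreover $L$ is neat in $(A,\s)$ and $1\in\Symd(\s)$ rules out the exceptional case of orthogonal involutions in characteristic two. The remark in \Cref{S:dec} (on the simplified criterion for neatness in this setting) then implies that $K$ is itself neat in $(A,\s)$.

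Next, because $\kap(A,\s)=4$, $\exp A\leq 2$ and $\s$ is hyperbolic, \Cref{T:hyp-quad-factor} applies and yields that $(A,\s)$ decomposes along the neat quadratic subalgebra $K$; that is, $K$ is contained in a $\s$-stable $F$-quaternion subalgebra of $A$ which, together with its centraliser, realises $(A,\s)$ as a tensor product decomposition compatible with $K$. This is precisely condition $(iii)$ of \Cref{P:Pf-dec-equiv} applied to the pair $(L,K)$.

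Finally, invoking the implication $(iii)\Rightarrow(i)$ of \Cref{P:Pf-dec-equiv} gives that $\Pf_{\s,L}$ is hyperbolic, completing the proof. There is no serious obstacle here; the main substantive work has been done in \Cref{T:hyp-quad-factor} (where the exponent hypothesis is crucial, cf. the example following it) and in the construction and properties of $\Pf_{\s,L}$ in \Cref{L:mults} and \Cref{P:Pf-dec-equiv}. The corollary is essentially a clean combination of these two results, and the verification that $K$ inherits neatness from $L$ is the only additional bookkeeping.
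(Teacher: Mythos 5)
Your proof is correct and matches the paper's own argument: fix a neat quadratic $F$-subalgebra $K$ of $(A,\s)$ contained in $L$, apply \Cref{T:hyp-quad-factor} to get decomposability along $K$, and conclude via the implication $(iii)\Rightarrow(i)$ of \Cref{P:Pf-dec-equiv}. The only additional bookkeeping you spell out (that $K$ inherits neatness from $L$ since $L$ is free as a $K$-module) is implicitly assumed in the paper's proof and is indeed justified by the remark in \Cref{S:dec}.
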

\begin{proof}
Let $L$ be a biquadratic neat  $F$-subalgebra of $(A,\s)$.
We fix a quadratic neat  $F$-subalgebra $K$ of $(A,\s)$ contained in $L$. If $\s$ is hyperbolic, then \Cref{T:hyp-quad-factor}
shows that $(A,\s)$ is decomposable along $K$, and hence $\Pf_{\s,L}$ is
hyperbolic by \Cref{P:Pf-dec-equiv}.
\end{proof}

\begin{thm}\label{C:dec-along-all-quad-neat}
Assume that $(A,\s)$ is totally decomposable and $\kap(A,\s)=4$.
Then $(A,\s)$ is decomposable along every biquadratic neat  $F$-subalgebra of $(A,\s)$.
\end{thm}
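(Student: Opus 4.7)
The plan is to deduce the theorem by combining \Cref{L:make-decomp-hyp} and \Cref{C:cap4-hyp-decalong} with the functoriality of $\Pf_{\s,L}$. First I would fix a neat biquadratic $F$-subalgebra $L$ of $(A,\s)$. By \Cref{P:Pf-dec-equiv}, decomposability of $(A,\s)$ along $L$ is equivalent to hyperbolicity of $\Pf_{\s,L}$, so it is enough to show that this Pfister form is hyperbolic. Set $n=\log_2\dim_FA-3$, which equals $1$, $2$, or $3$ according as $\s$ is orthogonal, unitary, or symplectic.

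Next I would verify the hypotheses of \Cref{L:make-decomp-hyp}: the condition $1\in\Symd(\s)$ is the standing assumption of \Cref{sec:biquad}, and $\exp A\leq 2$ follows from total decomposability, since any tensor product of quaternion algebras (over its centre) has exponent dividing $2$. The lemma then furnishes a field extension $F'/F$ over which $(A,\s)_{F'}$ becomes hyperbolic while every anisotropic quadratic $n$-fold Pfister form over $F$ remains anisotropic. By \Cref{P:biquat-Pfi-functorial}, $L\otimes F'$ is neat biquadratic in $(A,\s)_{F'}$ and $\Pf_{\s_{F'},L\otimes F'}$ coincides with $(\Pf_{\s,L})_{F'}$. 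Since $\kap((A,\s)_{F'})=4$, $\s_{F'}$ is hyperbolic, and $\exp A_{F'}\leq\exp A\leq 2$, I can invoke \Cref{C:cap4-hyp-decalong} to conclude that $\Pf_{\s_{F'},L\otimes F'}$ is hyperbolic, whence $(\Pf_{\s,L})_{F'}$ is hyperbolic, and in particular isotropic.

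To finish, I would use that $\Pf_{\s,L}$ is a quadratic $n$-fold Pfister form over $F$, hence either hyperbolic or anisotropic; the latter possibility is precluded by the choice of $F'$, since an anisotropic Pfister form would remain anisotropic over $F'$. Therefore $\Pf_{\s,L}$ is already hyperbolic over $F$, and \Cref{P:Pf-dec-equiv} yields decomposability along $L$. At this stage the proof is essentially assembly: the substance is carried out in \Cref{L:make-decomp-hyp}, where the reduction to the hyperbolic case is engineered by passing to the function field of a suitable quadric built from the decomposition of $(A,\s)$, and in \Cref{C:cap4-hyp-decalong}, which handles the hyperbolic case via \Cref{T:hyp-quad-factor}. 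The only point one must watch is that the exponent bound $\exp A\leq 2$ required by \Cref{C:cap4-hyp-decalong} is preserved under scalar extension, which is automatic.
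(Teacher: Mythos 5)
Your proof is correct and follows essentially the same route as the paper: fix a neat biquadratic $L$, pass via \Cref{L:make-decomp-hyp} to an extension $F'$ that makes $\s$ hyperbolic while preserving anisotropy of $n$-fold Pfister forms, invoke \Cref{P:biquat-Pfi-functorial} and \Cref{C:cap4-hyp-decalong} to see $(\Pf_{\s,L})_{F'}$ is hyperbolic, and conclude using the Pfister-form dichotomy and \Cref{P:Pf-dec-equiv}. You also correctly identify that the hypothesis $\exp A\leq 2$ needed for \Cref{C:cap4-hyp-decalong} follows from total decomposability, exactly as the paper notes.
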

\begin{proof}
Let $L$ be a biquadratic neat  $F$-subalgebra of $(A,\s)$.
By \Cref{L:make-decomp-hyp} there exists a field extension $F'/F$ such that 
$(A,\s)_{F'}$ is hyperbolic and such that every anisotropic $n$-fold Pfister form over $F$ remains anisotropic over $F'$.
Note that $\exp A_{F'}\leq \exp A\leq 2$, because $(A,\s)$ is totally decomposable.
Since $\s_{F'}$ is hyperbolic, it follows by \Cref{P:biquat-Pfi-functorial} and  \Cref{C:cap4-hyp-decalong} that $(\Pf_{\s,L})_{F'}$ is hyperbolic.
By the choice of $F'/F$, we conclude that $\Pf_{\s,L}$ is hyperbolic.
Therefore $(A,\s)$ is decomposable along~$L$, by \Cref{P:Pf-dec-equiv}.
\end{proof}

\begin{cor}\label{C:kap4ttdec-splitfactor}
Assume that
$(A,\s)$ is totally decomposable, $\kap(A,\s)=4$ and  $\coind A$ is even.
Then $(A,\s)$ has a total decomposition involving a split
$F$-quaternion algebra with orthogonal involution. 
\end{cor}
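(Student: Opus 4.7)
The plan is to exhibit a neat quadratic $F$-subalgebra $K\simeq F\times F$ of $(A,\s)$ sitting inside a neat biquadratic $F$-subalgebra $L$ of $(A,\s)$. Once such a pair $(K,L)$ is at hand, \Cref{C:dec-along-all-quad-neat} yields that $(A,\s)$ is decomposable along~$L$, and then the equivalence $(ii)\Leftrightarrow(iii)$ of \Cref{P:Pf-dec-equiv} gives decomposability along~$K$: that is, $K$ lies in some $\s$-stable $F$-quaternion subalgebra $Q$ of $A$. Since $Q\supseteq K\simeq F\times F$, the quaternion $Q$ is split; and since $\s|_Q$ fixes the non-central subalgebra $K$ pointwise, $\s|_Q$ is orthogonal. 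Writing $(A,\s)\simeq (Q,\s|_Q)\otimes(\C_A(Q),\s|_{\C_A(Q)})$, we have $\kap(\C_A(Q),\s|_{\C_A(Q)})=\kap(A,\s)/\kap(Q,\s|_Q)=2$, so $(\C_A(Q),\s|_{\C_A(Q)})$ is totally decomposable by \Cref{C:kap2totdec}, and combining yields the claimed total decomposition of $(A,\s)$ containing the split orthogonal quaternion factor $(Q,\s|_Q)$.

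The hypothesis that $\coind A$ is even will produce a $\s$-fixed nontrivial idempotent $e_0\in A$ with $\dim_F(e_0 A)=\tfrac{1}{2}\dim_F A$, so that $K:=F\cdot 1+F\cdot e_0$ is a quadratic \'etale $F$-subalgebra of $A$ contained in $\Symm(\s)$, isomorphic to $F\times F$, and neat in $(A,\s)$ because $e_0 A$ and $(1-e_0)A$ have equal $F$-dimension, making $A$ free as a $K$-module. In the unitary-of-inner-type case $(A,\s)\simeq(B\times B^\op,\sw)$, $\coind B=\coind A$ is even and $B\simeq\matr{2}(B')$, so I take $e_0:=(f,f)$ for any idempotent $f\in B$ with $\dim_F(fB)=\tfrac{1}{2}\dim_F B$. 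If $A$ is simple, I identify $(A,\s)$ with the adjoint algebra $(\End_D(V),\Ad_h)$ of a nondegenerate $\ve$-hermitian form $h$ on a right $D$-module $V$ over a central division $F$-algebra $D$ with involution; the standing hypothesis $1\in\Symd(\s)$ ensures $h$ is diagonalisable, and since $\dim_D V=\coind A$ is even, $V$ decomposes orthogonally as $V=V_1\perp V_2$ with $\dim_D V_1=\dim_D V_2$. Moreover, total decomposability of $(A,\s)$ forces $h$ to be of Pfister type up to similitude, so the decomposition can be chosen with $(V_1,h|_{V_1})$ similar to $(V_2,h|_{V_2})$; taking $e_0$ to be the projection on $V_1$ then yields an isomorphism of $F$-algebras with involution between $(e_0 A e_0,\s|)$ and $((1-e_0)A(1-e_0),\s|)$.

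To extend $K$ to a neat biquadratic $F$-subalgebra $L$, I exploit this isomorphism. By \cite[Theorem~4.1]{BGBT18a} applied to the capacity-$2$ algebra $(e_0 A e_0,\s|)$, there is a non-scalar $\s$-symmetric element $m_1\in e_0 A e_0$ with $m_1^2=\lambda e_0$ for some $\lambda\in\mg{F}$; transporting $m_1$ through the isomorphism produces $m_2\in(1-e_0)A(1-e_0)$ with $m_2^2=\lambda(1-e_0)$. The element $m:=m_1+m_2\in\C_A(K)\cap\Symm(\s)$ satisfies $m^2=\lambda\cdot 1\in\mg{F}$, and $L:=F+F e_0+F m+F e_0 m\simeq(F\times F)\otimes_F F[m]$ is a $4$-dimensional biquadratic \'etale $F$-subalgebra of $A$ contained in $\Symm(\s)$. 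Its neatness in $(A,\s)$ follows because the primitive idempotents of $L$ partition $A$ into equal-$F$-dimensional summands, refining the partition already provided by the neatness of $K$.

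The main technical obstacle will be the construction, in the simple case, of an orthogonal decomposition $V=V_1\perp V_2$ with $(V_1,h|_{V_1})$ similar to $(V_2,h|_{V_2})$; this uses the Pfister-type structure of $h$ forced by the total decomposability of $(A,\s)$. To illustrate, in the orthogonal case with $\car F\neq 2$, the adjoint $2$-fold Pfister form $\lla -a_1,-a_2\rra$ decomposes as $\psi\perp(-a_2)\psi$ with $\psi=\la 1,-a_1\ra$, and the adjoint involutions of $\psi$ and $(-a_2)\psi$ on $\matr{2}(F)$ coincide, scalar multiples of a bilinear form sharing the same adjoint involution.
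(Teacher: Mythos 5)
Your high-level plan matches the paper's: locate a split quadratic neat $F$-subalgebra $K\simeq F\times F$ of $(A,\sigma)$, extend it to a neat biquadratic $L$, apply \Cref{C:dec-along-all-quad-neat} and the equivalence $(ii)\Leftrightarrow(iii)$ of \Cref{P:Pf-dec-equiv} to place $K$ in a $\sigma$-stable $F$-quaternion subalgebra $Q$, then observe $Q$ is split and $\sigma|_Q$ is orthogonal and finish using multiplicativity of capacity and \Cref{C:kap2totdec}. Your observation that $K\subseteq\Symm(\sigma|_Q)$ already forces $\sigma|_Q$ orthogonal is correct, and in fact slightly more direct than the paper's argument, which handles the hyperbolic case separately via \Cref{L:meta-quat-factor} and in the non-hyperbolic case infers orthogonality from the fact that $\sigma|_Q$ is not hyperbolic.

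Where you diverge is in the sourcing of $K$ and $L$: the paper invokes Corollary~5.12 and Theorem~6.10 of \cite{BGBT18a} respectively, while you attempt to build both from scratch. This is where genuine gaps appear. The most serious is your construction of the biquadratic $L$: you want $m_1\in e_0Ae_0$ with $m_1$ non-scalar, $\sigma$-symmetric and $m_1^2=\lambda e_0$, $\lambda\in F^\times$. In characteristic $2$, a separable quadratic \'etale extension of $F$ is never generated by an element whose square is a scalar (such an element would be inseparable or central); so when the quadratic subalgebra produced by~\cite[Theorem~4.1]{BGBT18a} in $e_0Ae_0$ is a field, the desired $m_1$ does not exist and your $L$ fails to be \'etale. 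The construction works only in characteristic $\neq 2$ or when $F[m]$ splits. Additionally, two of the supporting claims in the simple case are left imprecise: that $1\in\Symd(\sigma)$ makes $h$ diagonalisable (this requires some care for hermitian forms over division algebras and, more importantly, is not actually what is needed — an orthogonal decomposition $V=V_1\perp V_2$ into nondegenerate equal-dimensional pieces suffices for neatness, without similarity), and that total decomposability of $(A,\sigma)$ forces $h$ to be "of Pfister type up to similitude," which is stated without justification and does not literally hold in the unitary and symplectic cases where $h$ is a hermitian form over a division algebra rather than a quadratic form over $F$. In fact, as you note the similarity $(V_1,h|_{V_1})\sim(V_2,h|_{V_2})$ is only used to transport $m_1$ to $m_2$ with the same scalar $\lambda$; you could avoid it by applying~\cite[Theorem~4.1]{BGBT18a} independently to each corner, but then you face a scalar-matching obstruction ($\lambda$ and $\mu$ may differ by a nonsquare) which your proposal does not address. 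Replacing your constructions of $K$ and $L$ by citations to \cite[Corollary~5.12]{BGBT18a} and \cite[Theorem~6.10]{BGBT18a} would close all these gaps and recover essentially the paper's proof.
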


\begin{proof}
  If $\s$ is hyperbolic then the statement follows directly from
  \Cref{L:meta-quat-factor}.  Hence we may assume that $\s$ is not
  hyperbolic.  Since $\coind(A,\s)$ is even, it follows from
  \cite[Corollary~5.12]{BGBT18a} that $(A,\s)$ contains a split
  quadratic neat $F$-subalgebra $K$.  By \cite[Theorem~6.10]{BGBT18a},
there is a quadratic neat  $F$-subalgebra $K'$ of $(A,\s)$ such
    that $K\otimes K'$ is a biquadratic neat  $F$-subalgebra of $(A,\s)$.
    We set $L=K\otimes K'$. By \Cref{C:dec-along-all-quad-neat}, 
    $(A,\s)$ is decomposable along $L$.
    It follows by \Cref{P:arrange-decompalong} that there exist
    $\s$-stable $F$-quaternion subalgebras $Q$ and $Q'$ of $A$ such that
    $Q\cap L=K$ and $Q'\cap L=K'$.
    As $K$ is
  split, the quaternion algebra $Q$ is split.  Since $\s$ is not
  hyperbolic, we conclude that $\s|_Q$ is not hyperbolic.  Hence
  $\s|_Q$ is orthogonal, for the unique symplectic involution on a
  split quaternion algebra is hyperbolic.
\end{proof}

We are now in the position to show that the form $\Pf_{\s,L}$ given by
a biquadratic neat  subalgebra $L$ of $(A,\s)$ is independent of the
choice of this subalgebra. 
\begin{prop}\label{C:Pf-disc-def}
Assume that $\kap(A,\s)=4$ and let $n=\log_2\dim_FA-3$.
There exists a unique $n$-fold Pfister form $\pi$ over $F$ such that
$\pi\simeq \Pf_{\s,L}$ for every biquadratic neat  $F$-subalgebra $L$
of $(A,\s)$. 
\end{prop}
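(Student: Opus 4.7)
The plan is to fix two neat biquadratic $F$-subalgebras $L_1,L_2$ of $(A,\s)$ and prove $\Pf_{\s,L_1}\simeq\Pf_{\s,L_2}$; uniqueness of $\pi$ then follows immediately. The strategy is to reduce the comparison to the hyperbolic situation via a generic scalar extension, relying throughout on the hyperbolicity criterion of \Cref{P:Pf-dec-equiv} and on the universal decomposability of totally decomposable algebras along every neat biquadratic subalgebra, established in \Cref{C:dec-along-all-quad-neat}.

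First, I would handle the case where $\pi_1:=\Pf_{\s,L_1}$ is hyperbolic over $F$. In that case \Cref{P:Pf-dec-equiv} gives decomposability of $(A,\s)$ along $L_1$; since $[L_1:F]=4=\kap(A,\s)$, \Cref{C:cap2-ttdec} upgrades this to total decomposability of $(A,\s)$. Then \Cref{C:dec-along-all-quad-neat} shows that $(A,\s)$ also decomposes along $L_2$, and a second application of \Cref{P:Pf-dec-equiv} makes $\pi_2:=\Pf_{\s,L_2}$ hyperbolic as well. Since two hyperbolic $n$-fold Pfister forms are isometric, this settles the case.

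For the general case, I would pass to $F':=F(\pi_1)$, the function field of the projective quadric of $\pi_1$. By \Cref{P:biquat-Pfi-functorial}, $L_i\otimes F'$ is neat biquadratic in $(A,\s)_{F'}$ and $(\pi_i)_{F'}=\Pf_{\s_{F'},L_i\otimes F'}$ for $i=1,2$. Since $(\pi_1)_{F'}$ is isotropic and Pfister forms are hyperbolic once isotropic, the preceding paragraph applied over $F'$ forces $(\pi_2)_{F'}$ to be hyperbolic.

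Assuming now $\pi_1$ is anisotropic over $F$ (the isotropic case being already settled), I would invoke the Cassels--Pfister subform theorem \cite[(22.5)]{EKM}: hyperbolicity of $(\pi_2)_{F'}$ means that $\pi_1$ is similar to a subform of $\pi_2$, and since both have dimension $2^n$, there is $c\in\mg{F}$ with $\pi_2\simeq c\pi_1$. Because $\pi_2$ represents $1$ and Pfister forms are round, $c\pi_1\simeq\pi_1$, whence $\pi_1\simeq\pi_2$. The main obstacle lies precisely in this final step, where one must carefully combine the subform theorem with Pfister roundness to pass from ``having the same generic splitting behaviour'' to genuine isometry; the earlier steps are essentially a chaining of already-established equivalences.
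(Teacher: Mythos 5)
Your argument follows essentially the same route as the paper: reduce to the case where one of the forms is hyperbolic by passing to the function field of its quadric, propagate hyperbolicity to the other form via functoriality (\Cref{P:biquat-Pfi-functorial}), the hyperbolicity--decomposability equivalence (\Cref{P:Pf-dec-equiv}), and the decomposability along all neat biquadratics (\Cref{C:dec-along-all-quad-neat}), and then conclude by the Subform Theorem plus roundness of Pfister forms.

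There is one small point that you left implicit and should make explicit. In the final step you apply the Subform Theorem to conclude that $\pi_1$ is similar to a subform of $\pi_2$; this requires \emph{both} $\pi_1$ and $\pi_2$ to be anisotropic over $F$, but you only assume $\pi_1$ anisotropic. The anisotropy of $\pi_2$ does follow: if $\pi_2$ were hyperbolic, your first paragraph with the roles of $L_1$ and $L_2$ swapped would force $\pi_1$ hyperbolic, a contradiction. The paper sidesteps this asymmetry by dichotomizing on whether $(A,\s)$ is totally decomposable (a condition symmetric in $L_1,L_2$), obtaining directly via \Cref{P:Pf-dec-equiv} that $\pi_1$ and $\pi_2$ are \emph{both} hyperbolic or \emph{both} anisotropic. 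Your version is equally valid once the symmetry remark is inserted; otherwise the two proofs coincide.
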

\begin{proof}
By \cite[Theorem~7.4]{BGBT18a} there exists a biquadratic neat 
$F$-subalgebra $L$ of $(A,\s)$. 
Consider another biquadratic neat  $F$-subalgebra $L'$ of $(A,\s)$.
We need to show that $\Pf_{\s,L'}=\Pf_{\s,L}$.

If $(A,\s)$ is totally decomposable, then we obtain by
\Cref{C:dec-along-all-quad-neat} and \Cref{P:Pf-dec-equiv}  that
$\Pf_{\s,L}$ and $\Pf_{\s,L'}$  
are hyperbolic, whereby $\Pf_{\s,L'}= \Pf_{\s,L}$.
Assume now that $(A,\s)$ is not totally decomposable.
Then it follows by \Cref{P:Pf-dec-equiv} that $\Pf_{\s,L}$ and
$\Pf_{\s,L'}$ are both anisotropic. 
Let $F'$ denote the function field of the projective quadric over $F$
given by $\Pf_{\s,L}$. 
Then $\Pf_{\s,L}$ becomes hyperbolic over $F'$.
By \Cref{P:biquat-Pfi-functorial} and \Cref{P:Pf-dec-equiv}, it follows that 
$(A,\s)_{F'}$ is decomposable along $L\otimes F'$.
By \Cref{C:dec-along-all-quad-neat}, then $(A,\s)_{F'}$ is also
decomposable along $L'\otimes F'$.  
By \Cref{P:biquat-Pfi-functorial} and \Cref{P:Pf-dec-equiv}, it
follows that $\Pf_{\s,L'}$ becomes hyperbolic over $F'$. 
Since $\Pf_{\s,L}$ and $\Pf_{\s,L'}$ are anisotropic $n$-fold Pfister
forms and since $F'$ is the function field of $\Pf_{\s,L}$ over $F$,
we conclude by the Subform Theorem \cite[(22.5)]{EKM} that
$\Pf_{\s,L'}=\Pf_{\s,L}$. 
\end{proof}

When $\kap(A,\s)=4$, we denote by $\Pf_\s$ the quadratic Pfister form
over $F$ which is characterised in \Cref{C:Pf-disc-def} and we call it
the \emph{discriminant Pfister form of $(A,\s)$}. 

\begin{thm}\label{T:Pf-disc-hyp}
Assume that $\kap(A,\s)=4$. 
The form $\Pf_{\s}$ is hyperbolic if and only if $(A,\s)$ is totally decomposable.
Moreover, for any field extension $F'/F$, we have that $(\Pf_{\s})_{F'}=\Pf_{\s_{F'}}$.
\end{thm}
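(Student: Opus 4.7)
The plan is to obtain both assertions directly from the preceding results, essentially as a bookkeeping argument.

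For the scalar extension formula, I would fix a neat biquadratic $F$-subalgebra $L$ of $(A,\s)$, which exists by \cite[Theorem~7.4]{BGBT18a}. By \Cref{P:biquat-Pfi-functorial}, $L\otimes F'$ is a neat biquadratic $F'$-subalgebra of $(A,\s)_{F'}$ and $(\Pf_{\s,L})_{F'}\simeq \Pf_{\s_{F'},L\otimes F'}$. Applying \Cref{C:Pf-disc-def} over $F$ yields $\Pf_\s\simeq \Pf_{\s,L}$, and applying the same result over $F'$ yields $\Pf_{\s_{F'}}\simeq \Pf_{\s_{F'},L\otimes F'}$. Combining these identifications gives $(\Pf_\s)_{F'}\simeq \Pf_{\s_{F'}}$.

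For the equivalence, suppose first that $(A,\s)$ is totally decomposable and fix any neat biquadratic $F$-subalgebra $L$. By \Cref{C:dec-along-all-quad-neat}, $(A,\s)$ decomposes along $L$, and \Cref{P:Pf-dec-equiv} then forces $\Pf_\s\simeq \Pf_{\s,L}$ to be hyperbolic. Conversely, if $\Pf_\s$ is hyperbolic, choose a neat biquadratic $L$ (again via \cite[Theorem~7.4]{BGBT18a}). By definition $\Pf_{\s,L}\simeq \Pf_\s$ is hyperbolic, so by \Cref{P:Pf-dec-equiv}, $(A,\s)$ is decomposable along $L$. Since $\kap(A,\s)=4=2^{n}$ with $n=2$ and $[L:F]=2^n$, the implication $(iii)\Rightarrow(i)$ of \Cref{C:cap2-ttdec} now yields total decomposability.

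I expect no substantial obstacle, since the theorem packages \Cref{C:Pf-disc-def}, \Cref{P:biquat-Pfi-functorial}, \Cref{P:Pf-dec-equiv}, \Cref{C:dec-along-all-quad-neat}, and \Cref{C:cap2-ttdec}. The only point deserving explicit mention is that the standing hypotheses ``$1\in\Symd(\s)$'' and ``$\kap(A,\s)=4$'' used to define the discriminant Pfister form persist under arbitrary scalar extension, so that $\Pf_{\s_{F'}}$ is itself well defined; this is immediate since degree, type, and membership of $1$ in $\Symd$ are all invariant under base change.
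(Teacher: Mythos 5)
Your proof is correct and follows essentially the same route as the paper: it packages \Cref{C:Pf-disc-def}, \Cref{P:biquat-Pfi-functorial}, \Cref{P:Pf-dec-equiv}, and \Cref{C:dec-along-all-quad-neat}, together with the existence of a neat biquadratic subalgebra from \cite[Theorem~7.4]{BGBT18a}. Your explicit citation of \Cref{C:cap2-ttdec} for the implication ``decomposable along $L$ with $[L:F]=4$ implies totally decomposable'' and your remark that the standing hypotheses persist under base change make explicit two points the paper leaves implicit, but the substance is identical.
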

\begin{proof}
By \cite[Theorem~7.4]{BGBT18a}, there exists a biquadratic neat 
$F$-subalgebra of $(A,\s)$. 
Hence the first part follows from \Cref{P:Pf-dec-equiv} and
\Cref{C:dec-along-all-quad-neat}. The second part follows from
\Cref{P:biquat-Pfi-functorial}. 
\end{proof}

This finishes the proof of our main theorem stated in the introduction.

%%%%%%%%%%%%%%%%%%%%%%%%%%%%%%%%%%%%%%%%%%%%%%%%%%%%%%%%%%%%
\section{Determination of the discriminant Pfister form} %%%
%%%%%%%%%%%%%%%%%%%%%%%%%%%%%%%%%%%%%%%%%%%%%%%%%%%%%%%%%%%%
\label{S:final}

In this final section we relate the discriminant Pfister form of an
algebra with involution of capacity $4$ to other known invariants and
we compute it in some special situations.
We show in~\Cref{P:cap4unitary-invarel} that in the unitary
  case the discriminant Pfister form is the norm form of a quaternion
  algebra that is Brauer equivalent to the discriminant algebra, and
  in~\Cref{cor:KQ} we use this result to give an alternative proof of
  the criterion due to Karpenko--Qu\'eguiner \cite{KQ} for total decomposability of
  algebras with unitary involution of degree~$4$. In the symplectic
  case when the characteristic is
  different from~$2$, we show in~\Cref{P:GPT} that the cohomological invariant
 introduced by Garibaldi--Parimala--Tignol in \cite{GPT} is the Arason invariant of the discriminant
  Pfister form, and we give an alternative proof of the result from \cite{GPT}
  characterising totally decomposable involutions by the vanishing of
  this invariant.

We further aim to relate the notions of discriminant Pfister form for
the different types of algebras with involution of capacity $4$.  
We will see
in~\Cref{L:discneatquadnorm}
  that an embedding between two such algebras with involution leads
  naturally to a factorisation relation between the corresponding
  Pfister forms. 

  In the sequel, we shall use the notion of quaternion $K$-algebra
  also in cases where $K$ is an \'etale $F$-algebra, but not
  necessarily a field. More generally, quaternion $K$-algebras can be
  defined over an arbitrary commutative ring $K$: see
  \cite[p. 4]{Knus} for the general case or \cite[p. 27]{Bae} in the
  case where $K$ is a semilocal ring (which therefore covers the case
  where $K$ is an \'etale $F$-algebra).  The Skolem-Noether theorem,
  familiar in the case where $K$ is a field, extends to the case where
  $K$ is an \'etale $F$-algebra: this follows directly by applying it
  from the situation where the center is a field to the simple
  components of $K$ (whose centers are the simple factors of $K$,
  hence fields).
 
\begin{lem}\label{L:cap4unitarybis}
Let $(A,\s)$ be an $F$-algebra with unitary involution of capacity
$4$.  Let $L$ be a biquadratic neat  $F$-subalgebra of $(A,\sigma)$
with Galois group $\{\id_L,\gamma_1,\gamma_2,\gamma_3\}$ and let
$Z=\Z(A)$. 
The following hold:
\begin{enumerate}[$(1)$]
\item  There exists $w\in \Symm(\sigma)\cap A^\times$ such that 
$w\ell = \gamma_1(\ell)w$ for all $\ell\in L$.
\item For every $w$ as in $(1)$, we have $\Pf_\sigma \simeq
  \la1,-\Nrd_A(w)\ra\otimes\N_{N/F}$ for the quadratic \'etale
  $F$-algebra $N=(L^{\gamma_2}\otimes Z)^{\gamma_1\otimes\sigma\rvert_Z}$.
\end{enumerate}
\end{lem}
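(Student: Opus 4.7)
The plan for part~(1) is to combine the Skolem--Noether theorem with Hilbert~90 applied to an \'etale quadratic extension. Since $L\otimes_F Z$ is a maximal \'etale $Z$-subalgebra of the central simple $Z$-algebra $A$, its centraliser in $A$ equals $L\otimes Z$ itself, so by Skolem--Noether there exists $w_0\in A^\times$ whose conjugation action on $L\otimes Z$ implements the $Z$-automorphism $\bar\gamma:=\gamma_1\otimes\id_Z$. Writing $\bar\sigma:=\id_L\otimes\s\rvert_Z$ and $\phi:=\bar\sigma\bar\gamma$, and applying $\s$ to the Skolem--Noether relation, one finds that $u:=\s(w_0)w_0^{-1}$ lies in $(L\otimes Z)^\times$ and satisfies $u\cdot\phi(u)=1$. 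Because $\gamma_1$ acts freely on $\mathrm{Spec}(L)$ and $\s\rvert_Z$ acts freely on $\mathrm{Spec}(Z)$, the involution $\phi$ acts freely on $\mathrm{Spec}(L\otimes Z)$, making $L\otimes Z/(L\otimes Z)^\phi$ a quadratic Galois \'etale extension, so Hilbert~90 yields $s\in(L\otimes Z)^\times$ with $u=\phi(s)s^{-1}$. I expect $w:=w_0\bar\sigma(s)$ to be the desired element: a direct calculation gives $\s(w)=w$, $w$ remains invertible, and conjugation by $w$ still implements $\bar\gamma$ on $L\otimes Z$, hence $w\ell=\gamma_1(\ell)w$ for every $\ell\in L$.

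For part~(2), let $w$ be as in (1) and take the space $W_1$ from \Cref{L:mults} attached to $\gamma_1$. First I identify $W_1=w\cdot(L\otimes Z)^\phi$: for $t\in L\otimes Z$ the product $wt$ automatically satisfies the weight-$\bar\gamma$ relation, and $\s(wt)=wt$ amounts precisely to $\phi(t)=t$; since $\dim_F W_1=4=\dim_F(L\otimes Z)^\phi$ the two spaces coincide. Both $L^{\gamma_1}$ and $N$ are $\phi$-fixed quadratic \'etale $F$-subalgebras of $L\otimes Z$, and they are linearly disjoint because $N\subseteq L^{\gamma_2}\otimes Z$ forces $L^{\gamma_1}\cap N\subseteq L^{\gamma_1}\cap L^{\gamma_2}=F$. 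Consequently $(L\otimes Z)^\phi=L^{\gamma_1}\otimes_F N$ as $F$-algebras, which viewed as an $L^{\gamma_1}$-algebra is the scalar extension of $N/F$ to $L^{\gamma_1}$. Moreover $w^2\in L^{\gamma_1}$, since $w^2$ commutes with $L$, is trivially $\bar\gamma$-invariant, and is $\s$-invariant.

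Using $wt=\bar\gamma(t)w$, I compute $(wt)^2=w^2\cdot t\bar\gamma(t)$, and the factor $t\bar\gamma(t)$ is the norm $\mathsf{N}_{(L\otimes Z)^\phi/L^{\gamma_1}}(t)$, which under the above isomorphism becomes the scalar extension $(\mathsf{N}_{N/F})_{L^{\gamma_1}}(t)$. Hence $q_1(wt)=s_1\bigl(w^2\cdot(\mathsf{N}_{N/F})_{L^{\gamma_1}}(t)\bigr)$, and the projection formula for the Scharlau transfer $s_{1*}$ yields $q_1\simeq s_{1*}(\la w^2\ra)\otimes\mathsf{N}_{N/F}$. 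I finish by computing $s_{1*}(\la w^2\ra)$ explicitly in a basis of $L^{\gamma_1}$: its Gram matrix has determinant $-\mathsf{N}_{L^{\gamma_1}/F}(w^2)=-\Nrd_A(w)$, the last equality following because $F(w)\otimes_F Z$ is a maximal commutative $Z$-subalgebra of $A$, so that the reduced norm of $w$ is the $F$-constant term of its minimal polynomial. Therefore $s_{1*}(\la w^2\ra)$ is similar to $\la 1,-\Nrd_A(w)\ra$, making $q_1$ similar to the $2$-fold Pfister form $\la 1,-\Nrd_A(w)\ra\otimes\mathsf{N}_{N/F}$; uniqueness of the Pfister form in its similarity class identifies $\Pf_\s$ with this form. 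The most delicate step I foresee is ensuring that Hilbert~90 and the projection-formula calculation go through uniformly in arbitrary characteristic, where in characteristic~$2$ the Pfister factor $\la 1,-\Nrd_A(w)\ra$ must be interpreted as a bilinear form tensored with the quadratic $\mathsf{N}_{N/F}$.
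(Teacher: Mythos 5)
Your proof is correct, and Part~(1) takes a genuinely different route from the paper. The paper constructs $w$ explicitly: it descends from $C=\C_A(L^{\gamma_1})$, a quaternion algebra over $L^{\gamma_1}Z$, to the $L^{\gamma_1}$-quaternion algebra $D=\{x\in C\mid\sigma(x)=\can_C(x)\}$, applies Skolem--Noether inside $D$ to obtain $y\in D^\times$ with $\sigma(y)=-y$ and $\Int_C(y)\rvert_L=\gamma_1$, then sets $w_0=yz'$ for a $\sigma$-skew $z'\in Z^\times$. You instead apply Skolem--Noether directly in $A$ to the maximal \'etale subalgebra $L\otimes Z$, obtain a (not yet $\sigma$-symmetric) $w_0$, observe that $u=\sigma(w_0)w_0^{-1}\in L\otimes Z$ satisfies $u\phi(u)=1$ for $\phi=(\id_L\otimes\sigma\rvert_Z)\circ(\gamma_1\otimes\id_Z)$, and symmetrize by Hilbert~90 for the quadratic Galois extension $(L\otimes Z)/(L\otimes Z)^\phi$ of \'etale $F$-algebras. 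This buys you a uniform argument that avoids the quaternionic descent (and the choice of $z'$, which in characteristic~$\neq 2$ the paper needs), at the cost of invoking Hilbert~90 for \'etale quadratic extensions rather than just Skolem--Noether in a quaternion algebra. For Part~(2), your argument closely tracks the paper's: both identify $W_1=wM$ with $M=(L\otimes Z)^\phi$, compute $(wt)^2=w^2\,\N_{M/L^{\gamma_1}}(t)$, and finish with Frobenius reciprocity plus the discriminant computation of the transfer $s_{1*}\la w^2\ra$. Your derivation of $w^2\in L^{\gamma_1}$ directly from the invariances of $w^2$ is cleaner than the paper's, which substitutes $w=w_0m_0$ and computes.

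Two small points deserve a word of care. First, the linear-disjointness claim ``$L^{\gamma_1}\cap N\subseteq L^{\gamma_1}\cap L^{\gamma_2}$'' compares subalgebras living in $L\otimes Z$ with subalgebras of $L$; it is true, but the cleanest argument is that $(L^{\gamma_1}\otimes 1)\cap(L^{\gamma_2}\otimes Z)=(L^{\gamma_1}\cap L^{\gamma_2})\otimes 1=F$ inside $L\otimes Z$. Second, the justification ``$F(w)\otimes_F Z$ is a maximal commutative $Z$-subalgebra'' silently assumes $w^2\notin F$; when $w^2\in F$ the algebra $F(w)$ has dimension~$2$, not~$4$. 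However, the needed identity $\Nrd_A(w)=\N_{L^{\gamma_1}/F}(w^2)$ still holds in that edge case (both sides become $(w^2)^2$), so the final formula is unaffected — this is only a gap in the stated justification, not in the result.
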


\begin{proof}
First observe that, after fixing an element $z\in Z\setminus F$ such that $z+\sigma(z)=1$, every element $a\in A$ can be decomposed as
$a=s_1+s_2z$ with $s_1, s_2\in\Symm(\sigma)$, given explicitly by $s_2=\bigl(z-\sigma(z)\bigr)^{-1}\bigl(a-\sigma(a)\bigr)$ and $s_1=a-s_2z$. 
Therefore multiplication in $A$ induces an isomorphism of $F$-vector spaces $$\Symm(\sigma)\otimes Z\to A\,.$$ 
This allows us to identify $L\otimes Z$ with the $F$-subalgebra $LZ$ of $A$. 
We obtain that $LZ$ is a Galois $F$-algebra: the nontrivial elements of its Galois group are $\sigma\rvert_{LZ}$ and the maps $\gamma_i\otimes\id_Z$ and
  $\gamma_i\otimes\sigma\rvert_Z$ for $i=1,2,3$.

To simplify notation we set $K=L^{\gamma_1}$.
Let $C=\C_A(K)=\C_A(KZ)$, which is a $KZ$-quaternion algebra and to which $\sigma$ restricts as a unitary
  involution. Let further
  \begin{equation}
    \label{eq:cap4unitarybis}
    D= \{x\in C\mid \sigma(x)=\can_C(x)\}.
  \end{equation}
Then $D$ is a $K$-quaternion algebra: if $KZ$ is a field, then this follows by~\cite[(2.22)]{BOI}, using that $K$ is the $F$-subalgebra of $KZ$ fixed under $\sigma\rvert_{KZ}$.
Clearly, $LZ\subseteq C=DZ$, and $\can_C$ restricts to $\gamma_1\otimes\id_Z$ on $LZ$ because $KZ$ is the center of $C$. Therefore the $F$-algebra $M=(LZ)^{\gamma_1\otimes\sigma\rvert_Z}$ lies in $D$, and it is a quadratic Galois extension of $K$ with Galois group generated by $\gamma_1\otimes\id_Z$. 
By the Skolem--Noether Theorem, $(\gamma_1\otimes\id_Z)\rvert_M$ extends to an inner automorphism $\Int_D(y)$ of $D$ for some $y\in D^\times$. 
Since $(\gamma_1\otimes\id_Z)^2\rvert_M=\id_M$, and $D=M[y]$, it follows that $y^2\in \C(D)=K$.
Since $(\gamma_1\otimes\id_Z)\rvert_M\neq \id_M$, we have $y\notin\C(D)=K$, and as $\s|_D=\can_D$ and $y^2\in K$, we conclude that $\s(y)=-y$. As $C=DZ$ and $MZ=LZ$, it follows that $\Int_C(y)|_{L}=\gamma_1$.

We pick an element $z'\in Z^\times$ such that $\sigma(z')=-z'$ and set $w_0=yz'$. (If $\car(F)=2$ then we may take $z'=1$.) 
Then $\sigma(w_0)=w_0$ and $w_0\in A^\times$, and for any $\ell\in L$ we have 
  \[
    w_0\ell w_0^{-1}=\Int_C(y)(\ell)= \gamma_1(\ell).
  \]
Hence $w_0$ is a possible choice for an element $w$ satisfying the conditions in~$(1)$.

To prove part $(2)$, we now consider an arbitrary element $w$ as in~$(1)$ while keeping our choice for $w_0$.
We set
\[
W=\{x\in\Symm(\sigma)\mid x\ell = \gamma_1(\ell)x\text{ for all $\ell\in L$}\},
\]
and fix an $F$-linear form $s\colon K\to F$ with $\ker(s)=F$. 
Since $\sigma$ is unitary, we have $\Symm(\sigma)=\Symd(\sigma)$, hence
\[
    q\colon W\to F,\qquad x\mapsto s(x^2)
\]
is the quadratic form from \Cref{L:quaternionizer}, and by definition the Pfister form $\Pf_\sigma$ is similar to $q$. Since $M=(LZ)^{\gamma_1\otimes\sigma\rvert_Z}$, we have $\sigma\rvert_M=(\gamma_1\otimes\id_Z)\rvert_M=\Int_A(w)\rvert_M$,
  hence
  \[
    \sigma(wm)=\sigma(m)w=wm \qquad\text{for all $m\in M$,}
  \]
which proves that $wm\in\Symm(\sigma)$. 
Since moreover $M$ centralizes $L$ it follows that $wM\subseteq W$. 
As $w\in\mg{A}$ we have $\dim_F wM=\dim_F M=4=\dim_F W$,
so we conclude that $$W=wM\,.$$
As $w_0\in W\cap\mg{A}$, we may
  write $w=w_0m_0$ for some $m_0\in M$, hence
\[
    w^2=w_0m_0w_0m_0=\sigma(m_0)w_0^2m_0= \sigma(m_0)y^2{z'}^2m_0.
\]
Since $y^2\in K$, ${z'}^2\in F$ and $\s(m_0)m_0=\N_{M/K}(m_0)\in K$, it follows that $w^2\in K$. For $m\in M$
  we have
  \[
    (wm)^2=w^2\sigma(m)m=w^2\N_{M/K}(m)\quad\in K\,.
  \]
  We conclude that $q$ is isometric to the quadratic form
  \[
    q'\colon M\to F,\qquad m\mapsto
    s\bigl(w^2\N_{M/K}(m)\bigr).
  \]
Note that $M=(LZ)^{\gamma_1\otimes\sigma\rvert_Z}$ is a Galois $F$-algebra whose Galois group is generated by $\sigma\rvert_M=\gamma_1\otimes\id_Z$ and $\gamma_2\otimes\id_Z$.
Since $\s|_K=\id_K$, it follows that $M=KN$ for $$N=M^{\gamma_2\otimes\id_Z}=(L^{\gamma_2}\otimes Z)^{\gamma_1\otimes\sigma\rvert_Z}$$ and $M$ is naturally isomorphic to $K\otimes N$.
Therefore the quadratic form $\N_{M/K}$ extends $\N_{N/F}$. 
By Frobenius Reciprocity \cite[(20.2), (20.3c)]{EKM} it follows that $q'$ is isometric to $s_\ast(\la w^2\ra)\otimes \N_{N/F}$. Set $d=\Nrd_A(w)$. 
Since $w\notin K$ and $w^2\in K$, we obtain that $d=\N_{K/F}(w^2)$. 
Since $s_\ast(\la 1\ra)$ is hyperbolic, it follows from \cite[(34.19)]{EKM} that $s_\ast(\la w^2\ra)$ is similar to $\la 1,-d\ra$. 
Therefore $q'$ is similar to $\la 1,-d\ra\otimes \N_{N/F}$. 
This shows that the forms $\Pf_\s$ and $\la 1,-d\ra\otimes \N_{N/F}$ are similar, and since they are both Pfister forms, we conclude that they are isometric.
\end{proof}

\begin{prop}\label{P:cap4unitary-invarel}
Let $(A,\s)$ be an $F$-algebra with unitary involution of capacity $4$. 
Then $\Pf_\s\simeq\Nrd_Q$ for an $F$-quaternion algebra $Q$, which is Brauer equivalent to the discriminant algebra of $(A,\s)$.
\end{prop}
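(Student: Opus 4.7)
The plan is to combine \Cref{L:cap4unitarybis} with the Main Theorem already established in \Cref{T:Pf-disc-hyp} and the Karpenko--Qu\'eguiner decomposability criterion from~\cite{KQ}.

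First, I would fix a neat biquadratic $F$-subalgebra $L\subset A$, which exists by \cite[Theorem~7.4]{BGBT18a}, and apply \Cref{L:cap4unitarybis} to obtain an isometry $\Pf_\s\simeq\la 1,-d\ra\otimes \N_{N/F}$ for some $d\in\mg{F}$ and some quadratic \'etale $F$-algebra $N$. A $2$-fold Pfister form of this shape is the reduced norm form of an $F$-quaternion algebra $Q$, uniquely determined up to isomorphism; concretely, $Q=N\oplus Ny$ with $y^2=d$ and $y\alpha=\bar\alpha y$ for $\alpha\in N$, where $\bar\alpha$ denotes the image of $\alpha$ under the nontrivial $F$-automorphism of $N/F$. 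This settles the existence of $Q$ with $\Pf_\s\simeq\Nrd_Q$.

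To identify $[Q]$ with $[D(A,\s)]$ in $\br{F}$, I observe that, by the Main Theorem, $\Pf_\s$ is hyperbolic over any field extension $F'/F$ if and only if $(A,\s)_{F'}$ is totally decomposable; the Karpenko--Qu\'eguiner criterion says this in turn is equivalent to $D(A,\s)_{F'}$ being split. Consequently, for every extension $F'/F$, the quaternion algebra $Q_{F'}$ is split if and only if $D(A,\s)_{F'}$ is split. Setting $F'=F(Q)$, the Brauer class $[D(A,\s)]$ lies in $\ker(\br{F}\to\br{F(Q)})=\{0,[Q]\}$ (the classical description of the kernel of restriction to the function field of the Severi--Brauer variety of a quaternion algebra). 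If $[D(A,\s)]=0$, then $D(A,\s)$ is split, hence $(A,\s)$ is totally decomposable, hence $\Pf_\s$ is hyperbolic, hence $Q$ is split and $[Q]=0=[D(A,\s)]$; otherwise $[D(A,\s)]=[Q]$ directly.

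The main obstacle is organisational rather than mathematical: the argument requires citing the Karpenko--Qu\'eguiner criterion as an external input, since the Main Theorem of this paper only relates decomposability to hyperbolicity of $\Pf_\s$, not directly to splitting of the discriminant algebra. Given those two inputs, the conclusion follows purely formally from the standard splitting-field argument for quaternion Brauer classes, and one automatically recovers in passing that $[D(A,\s)]$ has order dividing~$2$.
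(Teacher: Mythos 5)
Your proof is correct but takes a genuinely different route from the paper's. The paper computes the Brauer class of the discriminant algebra directly: by \cite[p.~129]{BOI} the $K$-quaternion algebra $D$ from \eqref{eq:cap4unitarybis} is the discriminant algebra of $(\C_A(K),\sigma|_{\C_A(K)})$; by \cite[Lemma~3.1(2)]{BFT} the discriminant algebra of $(A,\sigma)$ is Brauer equivalent to $\Cor_{K/F}(D)$; and the projection formula identifies $\Cor_{K/F}(D)$ with the explicit crossed product $Q=\bigl(N/F,\N_{K/F}(y^2)\bigr)$, whose norm form is then matched against the expression for $\Pf_\sigma$ from \Cref{L:cap4unitarybis}. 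This is constructive and self-contained, and it yields the Karpenko--Qu\'eguiner criterion as a downstream corollary rather than requiring it as input. Your argument reverses that logic: it imports the Karpenko--Qu\'eguiner criterion as a black box, combines it with \Cref{T:Pf-disc-hyp} to show that $Q_{F'}$ and the discriminant algebra of $(A,\sigma)_{F'}$ split simultaneously over every extension $F'/F$, and concludes by Amitsur's theorem over $F(Q)$ that $[Q]$ equals the class of the discriminant algebra. The formal steps are all sound, but be aware that in this paper's architecture the Corollary labelled Karpenko--Qu\'eguiner is \emph{deduced from} the very proposition you are proving, so you must cite the original~\cite{KQ} rather than the paper's corollary, and you should confirm that~\cite{KQ} is available without restriction on the characteristic (the paper's introduction indicates it is, flagging only the symplectic case in characteristic two as genuinely new, but that is the hypothesis your proof now rests on). What your proof loses relative to the paper's is the explicit crossed-product presentation of~$Q$; what it gains is a purely formal ``same generic splitting'' argument that would transfer to any situation where a decomposability criterion in terms of a quaternion Brauer class is already on record.
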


\begin{proof}
We fix a biquadratic neat  $F$-subalgebra $L$ of $(A,\s)$, whose existence is guaranteed by \cite[Theorem~7.4]{BGBT18a}.
We use the same notation as in the proof of  \Cref{L:cap4unitarybis}.
By~\cite[p.~129]{BOI}, the $K$-quaternion algebra $D$ defined in
\eqref{eq:cap4unitarybis} is the discriminant algebra of
$(C,\s\rvert_C)$. By~\cite[Lemma~3.1(2)]{BFT}, the
discriminant  algebra of $(A,\sigma)$ is Brauer equivalent to the
corestriction of $D$ to $F$.  

The proof of \Cref{L:cap4unitarybis} yields that
\[D=M\oplus My = KN\oplus KNy\]
where $M$ is a biquadratic Galois $F$-algebra and $K$ and $N$ are the subalgebras fixed by two different nontrivial elements of the Galois group of $M/F$ and where $y\in\mg{D}$ is such that $\Int_D(y)$ extends the nontrivial $K$-automorphism of $M$, $\s(y)=-y$ and $y^2\in K$.
Hence $D$ is isomorphic to the crossed product algebra $(KN/K,y^2)$ over $K$.
Since $M=KN$, which is a free compositum over $F$, it follows by the projection formula (see~\cite[Prop.~3.4.10]{GS06}) that the corestriction of $D$ is Brauer equivalent to the crossed product algebra $Q=\bigl(N/F,\N_{K/F}(y^2)\bigr)$ over $F$, which is an $F$-quaternion algebra with norm form
  $\la 1,-\N_{K/F}(y^2)\ra\otimes\N_{N/F}$.

By the proof of \Cref{L:cap4unitarybis}, after choosing $z'\in\mg{\C(A)}$ with $\s(z')=-z'$ and letting $w_0=yz'$, we obtain that $\Pf_\sigma\simeq\la1,-\Nrd_A(w_0)\ra\otimes\N_{N/F}$.
Since ${z'}^2\in F^\times$ and $\Nrd_A(w_0)=\N_{K/F}(y^2){z'}^4$, it follows that $\Pf_\sigma$ is isometric to
  $\Nrd_Q$.
\end{proof}

We can now retrieve from \Cref{T:Pf-disc-hyp} the criterion from
\cite[Section~3]{KQ} for total decomposability of an algebra with
unitary involution of capacity $4$. 

\begin{cor}[Karpenko--Qu\'eguiner]
  \label{cor:KQ}
  An $F$-algebra with unitary involution of capacity $4$ is totally
  decomposable if and only if its discriminant algebra is split.
\end{cor}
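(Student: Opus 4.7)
The plan is to assemble the corollary from the two main results about $\Pf_\sigma$ already established, combined with the standard characterisation of split quaternion algebras by hyperbolicity of their norm form.

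First, I would invoke Theorem \ref{T:Pf-disc-hyp}: since $(A,\sigma)$ has capacity~$4$, it is totally decomposable if and only if its discriminant Pfister form $\Pf_\sigma$ is hyperbolic. This reduces the problem to a statement about a quadratic form rather than about a decomposition.

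Next, I would apply Proposition \ref{P:cap4unitary-invarel}, which identifies $\Pf_\sigma$ with the reduced norm form $\Nrd_Q$ of an $F$-quaternion algebra $Q$ that is Brauer equivalent to the discriminant algebra of $(A,\sigma)$. At this point the task becomes: show that $\Nrd_Q$ is hyperbolic if and only if the discriminant algebra of $(A,\sigma)$ is split.

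For the final link, I would use two elementary facts: the reduced norm form of an $F$-quaternion algebra $Q$ is hyperbolic if and only if $Q$ is split (equivalently, $Q \simeq \matr{2}(F)$); and two Brauer equivalent central simple $F$-algebras are simultaneously split (since being split is a Brauer-class property). Since $Q$ is Brauer equivalent to the discriminant algebra, $Q$ is split if and only if the discriminant algebra is split. Chaining these equivalences together yields the criterion.

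There is no genuine obstacle here, as all the heavy lifting has been done in the preceding results; the only point worth noting is that the discriminant algebra of an algebra with unitary involution of capacity~$4$ has degree $\binom{4}{2}=6$ and is not itself a quaternion algebra, so the passage through the Brauer equivalent quaternion algebra $Q$ supplied by Proposition~\ref{P:cap4unitary-invarel} is essential for relating hyperbolicity of a $2$-fold Pfister form to splitting of the discriminant algebra.
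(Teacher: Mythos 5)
Your proof is correct and follows essentially the same route as the paper: invoke Theorem~\ref{T:Pf-disc-hyp} to reduce total decomposability to hyperbolicity of $\Pf_\sigma$, use Proposition~\ref{P:cap4unitary-invarel} to identify $\Pf_\sigma$ with $\Nrd_Q$ for $Q$ Brauer equivalent to the discriminant algebra, and conclude via the standard equivalence between splitting of a quaternion algebra and hyperbolicity of its norm form. The only difference is cosmetic (order of presentation).
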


\begin{proof}
  Let $(A,\s)$ be an $F$-algebra with unitary involution of capacity
  $4$ and let $D$ be its discriminant algebra.  By
  \Cref{P:cap4unitary-invarel}, $D$ is Brauer equivalent to an $F$-quaternion algebra $Q$ such that $\Pf_\s\simeq\Nrd_Q$.  It
  follows that $D$ is split if and only if $\Pf_\s$ is hyperbolic, and
  by \Cref{T:Pf-disc-hyp} this is equivalent to $(A,\s)$ being totally
  decomposable.
\end{proof}

When $\car F\neq 2$, for an $F$-algebra with symplectic
involution of degree a multiple of $8$, a cohomological invariant
$\Delta(A,\s)\in H^3(F,\mu_2)$ was defined in \cite{GPT}.
In the case where $\deg A=8$ this invariant is related to the discriminant Pfister form.

For a $3$-fold Pfister
form $\pi$ over a field $F$ of characteristic different from~$2$,
and for $a$, $b$, $c\in\mg{F}$ such that 
$\pi\simeq\la 1,-a\ra\otimes \la 1,-b\ra\otimes \la 1,-c\ra$, the
cup product $(a)\cup(b)\cup(c)$ in $H^3(F,\mu_2)$ is an
invariant of $\pi$, by~\cite[Satz 1.6]{Ara75}, also called the
\emph{Arason invariant of $\pi$}.  

\begin{prop}[Garibaldi--Parimala--Tignol]\label{P:GPT}
Assume that $\car F\neq 2$.  Let $(A,\s)$ be an $F$-algebra with
symplectic involution with $\kap(A,\s)=4$.  Then $\Delta(A,\s)$ is
the Arason invariant of $\Pf_\s$. Furthermore $\Delta(A,\s)=0$ if and only if $(A,\s)$ is totally decomposable.
\end{prop}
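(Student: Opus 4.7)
The plan is to compare the construction of the invariant $\Delta(A,\s)$ from \cite{GPT} directly with that of $\Pf_\s$, and then to reduce the decomposability statement to \Cref{T:Pf-disc-hyp} via Arason's theorem on $3$-fold quadratic Pfister forms in characteristic different from two.

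First, I would recall how $\Delta(A,\s)$ is produced in \cite[Section~7]{GPT}: starting from a neat biquadratic \'etale subalgebra $L\subseteq\Symm(\s)$ and a quadratic subalgebra $K\subseteq L$, one forms the centraliser $C=\C_A(K)$ and transfers the canonical quadratic form $c_2\colon \Symd(\s_C)\to K$ (given by $x\mapsto -x^2$ on the orthogonal complement of $L$ in $\Symd(\s_C)$) to $F$ by means of an $F$-linear form $s\colon K\to F$ with $\ker(s)=F$; this transfer is shown to be Witt equivalent to a unique $3$-fold Pfister form, and $\Delta(A,\s)$ is defined as its Arason invariant.

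Next, I would match this construction with ours. The transferred form described above is precisely the form $\wt{q}$ that appears in the proof of \Cref{L:quaternionizer}, and by the analysis there together with \Cref{L:mults} it is Witt equivalent to $q_1$, hence similar to $\Pf_{\s,L}$. By \Cref{C:Pf-disc-def} this last form is just $\Pf_\s$. Consequently the Pfister form implicit in the definition of $\Delta(A,\s)$ is $\Pf_\s$ itself, which gives the first assertion of the proposition. I expect the main subtlety here to lie not in any deep computation but in verifying that the normalisation conventions (the choice of $s$, of $K$ inside $L$, and of similarity factors) in \cite{GPT} match ours; the argument is uniform in these choices because the Arason invariant of a $3$-fold Pfister form depends only on the isometry class.

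Finally, for the second assertion I would argue: by a theorem of Arason \cite[Satz~5.7]{Ara75} (or more generally the injectivity of the $e_3$ invariant on $3$-fold Pfister forms), the Arason invariant of $\Pf_\s$ vanishes in $H^3(F,\mu_2)$ if and only if $\Pf_\s$ is hyperbolic. By \Cref{T:Pf-disc-hyp} this is equivalent to $(A,\s)$ being totally decomposable, which completes the proof.
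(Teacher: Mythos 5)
Your proposal is correct and follows essentially the same route as the paper: both identify the form $\wt{q}$ from \Cref{L:quaternionizer} with the $10$-dimensional form of trivial discriminant and Clifford invariant appearing in \cite{GPT}, observe that it is Witt equivalent to a form similar to $\Pf_\s$, and then use the fact that the Arason invariant of a quadratic $3$-fold Pfister form vanishes if and only if the form is hyperbolic, together with \Cref{T:Pf-disc-hyp}, for the decomposability criterion. The only small deviation is one of presentation: the paper cites \cite[Proposition~8.1]{GPT} for the identification of $\Delta(A,\s)$ with the Arason invariant of $\wt{q}$ rather than unwinding the construction of $\Delta$ from \cite[Section~7]{GPT} as you sketch, but the mathematical content is the same.
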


\begin{proof}
It is proven in \cite[Proposition~8.1]{GPT} that $\Delta(A,\s)$ is the
Arason invariant of a $10$-dimensional quadratic form of trivial
discriminant and Clifford invariant: this is the form $\wt{q}$
appearing the proof of \Cref{L:quaternionizer}. 
This form $\wt{q}$ is Witt equivalent to the $8$-dimensional quadratic
form $q$ in \Cref{L:quaternionizer}, and the Pfister form $\Pf_\s$ is
by definition similar to $q$, whereby its Arason invariant is the same
as for $q$ and $\wt{q}$. 
This relates the two invariants in the way as it is claimed here.

The equivalence of the vanishing of $\Delta(A,\s)$ with the
decomposability of $(A,\s)$ is shown in \cite[Section 9]{GPT}; it can
now alternatively be obtained from \Cref{T:Pf-disc-hyp}.  
In either way one relies on the fact that a quadratic $3$-fold Pfister
form is hyperbolic if and only if its Arason invariant is trivial,
which follows from \cite[Satz 5.6]{Ara75}.     
\end{proof}

We return to the situation where the field $F$ is of arbitrary
characteristic. 
For an involution $\s$ on an $F$-algebra $A$ one defines
$$
\Alt(\s)=\{x-\s(x)\mid x\in A\}\,.
$$
Recall from \cite[(7.2)]{BOI} that the discriminant of an orthogonal
involution $\s$ on a central simple $F$-algebra $A$ of even degree
$2m$ is the square 
class $(-1)^m\Nrd_A(y)\sq{F}$ in $\scg{F}$ given by an arbitrary
element $y\in \mg{A}\cap\Alt(\s)$, and that there always exists such
an element. 

\begin{lem}\label{L:discneatquadnorm}
Let $(B,\tau)$ be an $F$-algebra with orthogonal involution of degree
$4$. 
Let $d\in\mg{F}$ be such that the discriminant of $\tau$ is $d\sq{F}$.
Then $d$ is represented by $\N_{E/F}$ for every quadratic neat 
$F$-subalgebra $E$ of $(B,\tau)$.  
\end{lem}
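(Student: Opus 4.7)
The plan is to produce an element $y\in\mg{B}\cap\Alt(\tau)$ whose reduced norm manifestly lies in $\N_{E/F}(\mg{E})\cdot\sq{F}$; the conclusion then follows at once from the formula $\disc(\tau)=\Nrd_B(y)\sq{F}$, using that $(-1)^m=1$ for $m=\tfrac{1}{2}\deg B=2$.

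To produce $y$, I would set $C=\C_B(E)$. Since $\dim_FB=16$ and $[E:F]=2$, the double centralizer theorem gives that $C$ is an $E$-quaternion algebra. As $E\subseteq\Symm(\tau)$, the restriction $\tau|_C$ is $E$-linear and hence is an involution of the first kind on the $E$-quaternion algebra $C$: it is either the canonical involution $\can_C$, or it has the form $\Int_C(u)\circ\can_C$ for some invertible pure quaternion $u\in C$. In the first case any invertible pure quaternion of $C$ is reversed by $\tau$; in the second, $u$ itself is. In either case I obtain an invertible pure quaternion $y\in\mg{C}$ with $\tau(y)=-y$. Because $\tau$ is orthogonal on $B$ forces $\car F\neq 2$ in the context of this paper, we then have $y=\tfrac{1}{2}\bigl(y-\tau(y)\bigr)\in\Alt(\tau)$, and clearly $y\in\mg{C}\subseteq\mg{B}$.

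Since $y$ is a pure quaternion in $C$, $\Nrd_C(y)=y\can_C(y)=-y^2\in E$. I would next invoke the standard transfer formula
$$\Nrd_B(c)=\N_{E/F}\bigl(\Nrd_C(c)\bigr)\qquad\text{for }c\in C=\C_B(E),$$
which is easily verified by passing to a splitting field of $B$. Combined with $\N_{E/F}(-1)=1$, this yields $\Nrd_B(y)=\N_{E/F}(-y^2)=\N_{E/F}(y^2)$, so that $d\sq{F}=\N_{E/F}(y^2)\sq{F}$. As $\N_{E/F}(s)=s^2$ for any $s\in F\subseteq E$, one may absorb any square of $F$ into $y^2$ to exhibit an element of $\mg{E}$ whose $\N_{E/F}$ equals $d$.

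The case $E\simeq F\times F$ is essentially trivial because $\N_{E/F}\colon F\times F\to F$, $(a,b)\mapsto ab$, is already surjective onto $\mg{F}$; alternatively it is subsumed by the argument above read componentwise, the neatness of $E$ in $(B,\tau)$ ensuring that $\tau$ restricts to the same type on both quaternion factors of $C=B_1\times B_2$. The only really delicate point I anticipate is the norm-transfer formula, which is classical but deserves a short reference.
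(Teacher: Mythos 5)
Your overall route is the same as the paper's: pass to $C=\C_B(E)$, produce an invertible $y\in C\cap\Alt(\tau)$ whose square lies in $E$, and then identify $\Nrd_B(y)$ with $\N_{E/F}(y^2)$. (The paper states the norm identity by noting $E[y]$ is a quartic \'etale $F$-subalgebra and reading the reduced norm off it, while you invoke the transfer formula $\Nrd_B(c)=\N_{E/F}(\Nrd_C(c))$ for $c\in C$; the two are equivalent and either is fine.) Two small points in your construction of $y$ are harmless but worth noting: the case $\tau|_C=\can_C$ cannot occur, since $\can_C$ is symplectic while $\tau|_C$ is orthogonal; and the hypothesis you really need is not ``$y$ is a pure quaternion reversed by $\tau$'' but ``$y\in\Alt(\tau)$'', which the discriminant formula requires.

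The genuine gap is your characteristic assumption. You write that ``$\tau$ is orthogonal on $B$ forces $\car F\neq 2$ in the context of this paper'' and then use the $\tfrac12(y-\tau(y))$ trick to land in $\Alt(\tau)$. Neither is correct in the relevant generality: this lemma must hold in characteristic $2$, because it is invoked in the proof of \Cref{prop:examples}, parts $(2)$ and $(3)$, which carry no characteristic restriction (the orthogonal factor $(B,\tau)$ there may live over a field of characteristic $2$; only part $(1)$ assumes $\car F\neq 2$). In characteristic $2$ one has $\Skew(\tau)=\Symm(\tau)$, so $\tau(y)=-y$ only puts $y$ in $\Symm(\tau|_C)$, a $3$-dimensional $E$-space, while $\Alt(\tau|_C)$ is the $1$-dimensional $E$-subspace $uE$; an arbitrary pure quaternion fixed by $\tau$ need not lie in it. Your chosen element $y=u$ \emph{does} happen to lie in $\Alt(\tau|_C)=uE$ in every characteristic, so the construction can be saved, but your stated justification does not cover it. The clean fix is to do what the paper does: start directly from $y\in\mg{C}\cap\Alt(\tau|_C)$ (which is nonempty), observe $y\notin E$ since $E\cap\Alt(\tau|_C)=0$ for an orthogonal $\tau|_C$, and check $y^2\in E$ from $\Alt(\tau|_C)=uE$ with $u^2\in E$.
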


\begin{proof}
Fix a quadratic neat  $F$-subalgebra $E$ of $(B,\tau)$.
Then $C=\C_B(E)$ is a quaternion $E$-algebra. We fix an element $y\in
\mg{C}\cap\Alt(\tau|_C)$.  
Since $\tau|_C$ is orthogonal, we have $y\notin E$.
On the other hand $y^2\in E$. Hence $\deg B=4=[E[y]:F]$, and it
follows that 
 $\Nrd_B(y)=\N_{E/F}(y^2)$. 
By the choice of $d$, we obtain that
$d\sq{F}=\Nrd_B(y)\sq{F}=\N_{E/F}(y^2)\sq{F}$, which shows the claim. 
\end{proof}

\begin{prop}
\label{prop:examples}
Let $(B,\tau)$ be an
  $F$-algebra with orthogonal involution of capacity $4$.  
  Let $d\in\mg{F}$ be such that $d\sq{F}$ is the discriminant of
  $(B,\tau)$. The following hold:
\begin{enumerate}[$(1)$]
\item If $\car F\neq 2$, then $\Pf_{\tau}=\la 1,-d\ra$.
\item For any quadratic \'etale $F$-algebra $Z$, the discriminant Pfister form of the $F$-algebra with unitary
    involution $(B,\tau)\otimes (Z,\can_{Z/F})$ of capacity $4$ is
    given by $\la 1,-d\ra\otimes \mathsf{N}_{Z/F}$. 
\item For any $F$-quaternion algebra $Q$, the discriminant
    Pfister form of the $F$-algebra with symplectic involution
    $(B,\tau)\otimes (Q,\can_Q)$ of capacity $4$ is given by $\la
    1,-d\ra\otimes \Nrd_Q$ where $\Nrd_Q$ is the reduced norm form of
    $Q$. 
\end{enumerate}
\end{prop}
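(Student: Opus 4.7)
The plan is to prove the three cases by a uniform strategy built on \Cref{L:mults}: take a neat biquadratic subalgebra $L_B$ of $(B,\tau)$ (existing by \cite[Theorem~7.4]{BGBT18a}) and work with $L = L_B \otimes 1$ in each case, then exploit part~(1) to identify the 1-fold Pfister factor.

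For part~(1), where $A = B$ and $\sigma = \tau$, the form $\Pf_\tau$ is a quadratic $1$-fold Pfister form over $F$. Combining \Cref{T:Pf-disc-hyp} with the Knus--Parimala--Sridharan criterion~\cite{KPS} yields that $\Pf_\tau$ and $\la 1,-d\ra$ become hyperbolic over exactly the same extensions of $F$, namely those over which $d$ becomes a square. Since a $1$-fold Pfister form is determined up to isometry by its splitting square class, $\Pf_\tau \simeq \la 1,-d\ra$.

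For parts~(2) and~(3), set $R = Z$ or $R = Q$ respectively, so that $(A,\sigma) = (B,\tau) \otimes (R,\can_R)$. The subalgebra $L = L_B \otimes 1$ is contained in $\Symm(\sigma)$, has $[L:F] = 4 = \kap(A,\sigma)$, and is therefore neat in $(A,\sigma)$ by \cite[Proposition~5.6]{BGBT18a}. Decomposing $\Symm(\sigma) = \Symm(\tau) \otimes F \oplus \Skew(\tau) \otimes \Skew(\can_R)$, for a nontrivial element $\gamma$ of the Galois group of $L/F$ the twisted commutant $W^A = \{x \in \Symm(\sigma) \mid yx = x\gamma(y)\ \forall y \in L\}$ decomposes as $W^{B,+} \oplus (W^{B,-} \otimes \Skew(\can_R))$, where $W^B = \{b \in B \mid \ell b = b\gamma(\ell)\ \forall \ell \in L_B\}$ and $W^{B,\pm} = W^B \cap \{b \mid \tau(b) = \pm b\}$. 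Using that $uv+vu = 0$ for $u \in W^{B,+}$ and $v \in W^{B,-}$ (a consequence of $uv \in L_B \subseteq \Symm(\tau)$ and $\tau(uv) = -vu$) together with an orthogonal basis $r_1,\ldots,r_m$ of $\Skew(\can_R)$ for the quadratic form $r \mapsto r^2$, the form $q^A$ of \Cref{L:mults} decomposes as
\[
q^A \simeq q^{B,+} \perp \la r_1^2,\ldots,r_m^2 \ra \otimes q^{B,-},
\]
where $q^{B,\pm}(w) = s(w^2)$ on $W^{B,\pm}$.

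To identify the similarity class of $q^A$, fix an invertible element $y \in W^{B,-}$ (every nonzero element of $W^B$ is invertible in the central simple algebra $B$) and a generator $\xi$ of $L_B^{\gamma,-} = \{\ell \in L_B \mid \gamma(\ell) = -\ell\}$ with $\xi^2 = t \in \mg{F}$. Under the parametrisations $W^{B,-} \leftrightarrow L_B^\gamma$ via $\ell \mapsto \ell y$ and $W^{B,+} \leftrightarrow L_B^\gamma$ via $\mu \mapsto \mu\xi y$, a direct calculation using $\ell y = y\gamma(\ell)$ gives $q^{B,-}(\ell y) = s(\ell^2 y^2)$ and $q^{B,+}(\mu\xi y) = -t \cdot s(\mu^2 y^2)$, so $q^{B,+}$ is similar to $-t \cdot q^{B,-}$. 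Consequently $q^A$ is similar to $\Pf_\tau \otimes \la -t, r_1^2,\ldots,r_m^2\ra$, which by part~(1) equals $\la 1,-d\ra \otimes \la -t, r_1^2,\ldots,r_m^2\ra$. Combining this with the identification $\Nrd_R = \la 1\ra \perp \la -r_1^2,\ldots,-r_m^2\ra$ (since $r^2 = -\Nrd_R(r)$ on pure elements of $R$), the desired equality $\Pf_\sigma \simeq \la 1,-d\ra \otimes \Nrd_R$ reduces to the vanishing of the cup product $(d,t)_F$ in $\brd{F}$. The main obstacle is precisely this last step, and it is resolved by applying \Cref{L:discneatquadnorm} to the neat quadratic subalgebra $F[\xi] \subseteq L_B$ of $(B,\tau)$ (which is neat by the remark on subalgebras of neat algebras): the discriminant $d$ is represented by $\N_{F[\xi]/F} = \la 1,-t\ra$, i.e.\ $d$ is a norm from $F[\sqrt{t}]$, which is equivalent to $(d,t)_F$ being split.
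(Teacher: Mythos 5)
Your approach is genuinely different from the paper's and, in characteristic $\neq 2$, the computation is essentially sound: the eigenspace decomposition of $W^A$ combined with the composition $q^A\simeq q^{B,+}\perp\la r_1^2,\ldots,r_m^2\ra\otimes q^{B,-}$ and the twist by $\xi$ is a nice uniform way to handle parts~(2) and~(3) at once, and the final reduction to the symbol $(d,t)$ being killed by \Cref{L:discneatquadnorm} is a clean closing step. Two issues, however. The minor one: the parenthetical claim that \emph{every} nonzero element of $W^B$ is invertible in $B$ is false (take $B=\matr{4}(F)$, $L_B$ diagonal, $\gamma$ a double transposition; then $W^B$ contains rank-one matrices). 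What you need is the \emph{existence} of an invertible $y\in W^{B,-}$, which should be supplied the way the paper does it -- by observing that $W^{B,-}$ is exactly $\mg{C_0}\cap\Alt(\tau|_{C_0})$-type data for the $K$-quaternion algebra $C_0=\C_B(K)$, where invertible alternating elements exist by \cite[(7.2)]{BOI}.

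The major gap is characteristic~$2$. Parts~(2) and~(3) of the proposition are stated without any restriction on $\car F$, but your argument is written entirely for $\car F\neq 2$. Concretely: the decomposition $\Symm(\sigma)=\Symm(\tau)\otimes F\oplus\Skew(\tau)\otimes\Skew(\can_R)$ degenerates when $\car F=2$ (there $\Symm(\tau)=\Skew(\tau)$ and $\Symm(\can_R)=\Skew(\can_R)=F$, so the right side has the wrong dimension); the $\pm 1$-eigenspace splitting $W^B=W^{B,+}\oplus W^{B,-}$ is unavailable; the quadratic form $r\mapsto r^2$ on the trace-zero elements of $Q$ in characteristic $2$ has $1$ in the radical of its polar form, so there is no orthogonal basis $r_1,\ldots,r_m$ of the kind you use; and there is no element $\xi\in L_B$ with $\gamma(\xi)=-\xi$ and $\xi^2\in\mg{F}$ when $\car F=2$, since separability of $L_B^{\gamma'}/F$ forbids it. The reduction of the problem to the vanishing of a quaternion symbol $(d,t)_F$ is itself a characteristic-$\neq 2$ phenomenon. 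The paper sidesteps all of this: for the unitary case it applies \Cref{L:cap4unitarybis}, whose proof uses $z\in Z$ with $z+\sigma(z)=1$ in place of a $\pm$-decomposition, and the substitute for your $(d,t)$-computation is the comparison $\la 1,-d\ra\otimes\N_{N/F}\simeq\la1,-d\ra\otimes\N_{Z/F}$ obtained via \Cref{L:discneatquadnorm} applied to the Galois algebra $(LZ)^{\gamma_2\otimes\id_Z}$; for the symplectic case it restricts to the unitary subalgebra $\C_A(Z)$, applies part~(2), and finishes with a $5$-dimensional common-subform (linkage) argument for $3$-fold Pfister forms. Your proof would need to be reworked along those characteristic-free lines to cover $\car F=2$.
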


\begin{proof}
By \cite[Theorem 7.4]{BGBT18a}, $(B,\tau)$ contains a biquadratic neat 
$F$-subalgebra $L$.  
Let $\{\id_L,\gamma_1,\gamma_2,\gamma_3\}$ be the Galois group of $L$
viewed as a Galois $F$-algebra.  
We set $K=L^{\gamma_1}$ and fix an $F$-linear functional $s\colon K\to
F$ with $\ker(s)=F$. 
We further set $C_0=\C_B(K)$ and observe that $C_0$ is a
$K$-quaternion algebra containing $L$ and that $\tau$ restricts to an
orthogonal involution on $C_0$. 
We fix $y\in C_0^\times\cap\Alt(\tau\rvert_{C_0})$. 
Then $y^2\in K$ and $y\in B^\times\cap\Alt(\tau)$, hence
  \begin{equation}
    \label{eq:yy}
    \can_{C_0}(y)=-y,\qquad \Nrd_B(y)=\N_{K/F}(y^2)
    \quad\text{and}\quad dF^{\times2}=\Nrd_B(y)F^{\times2}.
  \end{equation}
Moreover, since $y\in\Alt(\tau\rvert_{C_0})$ and $L\subseteq\Symm(\tau\rvert_{C_0})$, it follows that $\Trd_{C_0}(yL)=0$, by \cite[(2.3)]{BOI}. 
As $\can_{C_0}\rvert_L=\gamma_1\rvert_L$, this implies that
  \begin{equation}
    \label{eq:y}
    y\ell=\gamma_1(\ell)y\qquad\text{for every $\ell\in L$.}
  \end{equation}

$(1)$\, Recall that (only) for part $(1)$ we assume that $\car F\neq
2$. Thus, $\Symd(\tau)=\Symm(\tau)$ and $L=K\oplus vK$ for some
element $v\in L^\times$ with $v^2\in F^\times$ 
and $\gamma_1(v)=-v$. The latter implies that $yv=-vy$. Let
\[
  W_0=\{x\in\Symm(\tau)\mid x\ell = \gamma_1(\ell)x \text{ for all
    $\ell\in L$}\}.
\]
By definition, the Pfister form $\Pf_\tau$ is similar to the quadratic
form $$q_0\colon W_0\to F,\,x\mapsto s(x^2)\,.$$

As $\tau(y)=-y$, $\tau(v)=v$ and $yv=-vy$, it follows that
$yv\in\Symm(\tau)$, and as $v\in L$, we conclude by \eqref{eq:y} that $yv\in W_0$. 
Hence $yvK\subseteq W_0$, and since $yv\in B^\times$ and therefore $\dim_F\, yvK=\dim_FK=2=\dim_FW_0$, we obtain that
$$W_0=yvK\,.$$ For $x\in K$, we have
\[
  q_0(yvx)=s\bigl((yv)^2x^2\bigr) = -v^2s(y^2x^2).
\]
Therefore, $q_0$, hence also $\Pf_\tau$, is similar to $s_*(\la
y^2\ra)$.
Now, the discriminant of $s_*(\la y^2\ra)$ is
  $\N_{K/F}(y^2)F^{\times2}$ by~\cite[(34.19)]{EKM}, so
$s_*(\la y^2\ra)$ is similar to
$\la1,-\N_{K/F}(y^2)\ra$, hence also
to $\la1,-d\ra$ by~\eqref{eq:yy}. We conclude that $\Pf_\tau$ is
similar to $\la1,-d\ra$, and since both binary forms represent~$1$, it
follows that they are isometric.

$(2)$\,
For proving part $(2)$, we set $(A,\sigma)=
(B,\tau)\otimes(Z,\can_{Z/F})$ and fix an element $z\in Z^\times$ such that
$\can_{Z/F}(z)=-z$. (If $\car(F)=2$ we may choose $z=1$.) 
Then $yz\in\Symm(\sigma)\cap A^\times$ and \eqref{eq:y} shows that 
$yz\ell=\gamma_1(\ell)yz$ for every $\ell\in L$.
Hence it follows from
\Cref{L:cap4unitarybis} that
$\Pf_\sigma\simeq\la1,-\Nrd_A(yz)\ra\otimes\N_{N/F}$ for the quadratic \'etale $F$-algebra
$N=(L^{\gamma_2}\otimes Z)^{\gamma_1\otimes\can_{Z/F}}$. Since $z^2\in
F^\times$, we have by~\eqref{eq:yy} $\Nrd_A(yz)=\Nrd_A(y)z^4\in
dF^{\times2}$, whereby $\la1,-\Nrd_A(yz)\ra\simeq\la1,-d\ra$. To
complete the proof, it suffices to show that
$\la1,-d\ra\otimes\N_{N/F}$ is isometric to
$\la1,-d\ra\otimes\N_{Z/F}$.

For this we note that $(LZ)^{\gamma_2\otimes\id_Z}$ is a Galois $F$-algebra with Galois group isomorphic to $(\zz/2\zz)^2$, and the quadratic
$F$-subalgebras fixed under the nontrivial elements of the Galois
group are $N$, $Z$ and $L^{\gamma_2}$.
Hence
$N\otimes L^{\gamma_2}\simeq Z\otimes L^{\gamma_2}$. If
$L^{\gamma_2}$ splits, then $N\simeq Z$, hence
$\N_{N/F}\simeq\N_{Z/F}$ and the proof is complete. If $L^{\gamma_2}$
is a field, the isomorphism $N\otimes L^{\gamma_2}\simeq Z\otimes
L^{\gamma_2}$ implies that $\N_{N/F}$ and $\N_{Z/F}$ become isometric
after scalar extension to $L^{\gamma_2}$, hence
by~\cite[(34.9)]{EKM} the form $\N_{N/F}\perp-\N_{Z/F}$ is Witt
equivalent to a multiple of $\N_{L^{\gamma_2}/F}$. But since
$L^{\gamma_2}$ is a quadratic neat  subfield of $(B,\tau)$,
\Cref{L:discneatquadnorm} implies that $\la 1,-d\ra\otimes
\N_{L^{\gamma_2}/F}$ is hyperbolic.
Hence $\la 1,-d\ra\otimes(\N_{N/F}\perp-\N_{Z/F})$ is hyperbolic. 
Therefore we have $\la1,-d\ra\otimes\N_{N/F}\simeq\la1,-d\ra\otimes\N_{Z/F}$.

$(3)$\, Refreshing the notation, we set $(A,\s)=(B,\tau)\otimes(Q,\can_Q)$, which is an $F$-algebra with symplectic involution.
We fix a quadratic \'etale $F$-subalgebra $Z$ of $Q$.
The nontrivial $F$-automorphism of $Z$ extends to $\Int_Q(j)$ for some $j\in\mg{Q}$, and we obtain that  $$Q=Z\oplus Zj$$
and $j^2\in\mg{F}$.
Let $A'=\C_A(Z)=B\otimes Z$ and note that $A'$ is $\s$-stable. 
We set $\s'=\s|_{A'}$.
Then $(A',\s')$ is an $F$-algebra with unitary involution.
Note that $\kap(A,\s)=\kap(A',\s')=4$ and $L$ is a biquadratic neat  $F$-subalgebra of $(A',\s')$ and of $(A,\s)$.  
By $(2)$ we have
  $$\Pf_{\s'}\simeq \la1,-d\ra\otimes\N_{Z/F}\,.$$
 
We set 
\begin{eqnarray*}
W\, & = & \{x\in \Symd(\s)\mid x\ell=\gamma_1(\ell)x\text{ for all
          }\ell\in L\}\quad\mbox{ and }\\ 
W'& = & \{x\in \Symd(\s')\mid x\ell=\gamma_1(\ell)x\text{ for all
        }\ell\in L\} \,. 
\end{eqnarray*}
Since $\s'$ is unitary we have $\Symd(\s')=\Symm(\s')$ and thus $W'=W\cap A'$.
By the definition,
$\Pf_\s$ is similar to the quadratic form $q\colon W\to F,x\mapsto
s(x^2)$, and 
 $\Pf_{\s'}$ is similar to $q'\colon W'\to F, x\mapsto s(x^2)$, which
 is the restriction of $q$ to $W'$.  
 
We first look at the case where $q'$ is isotropic. In this case
$\Pf_{\s'}$ and $\Pf_\s$ are isotropic, and hence hyperbolic, because
they are Pfister forms. 
Since $\Pf_{\sigma'}\simeq\la1,-d\ra\otimes\N_{Z/F}$, and since
$\N_{Z/F}$ is a subform of $\Nrd_Q$, it follows that the $3$-fold
Pfister form $\la 1,-d\ra \otimes \Nrd_Q$ is hyperbolic, and hence
isometric to $\Pf_\s$. 

We may now assume for the rest of the proof that $q'$ is anisotropic.
Note that $\Int_{A}(j)$ commutes with $\s'$, because $\s(j)=-j$ and $j^2\in Z= \C(A')$.
Moreover $j\in\C_A(L)$. 
It follows that $W'$ is preserved under $\Int_A(j)$.

Note that $L\cap W'=0$ and $L\oplus W'\subseteq\C_{A'}(K)$.
However $\Symm(\tau)\not\subseteq \C_A(K)$, so in particular $\Symm(\tau)\neq L\oplus W'$. 
Since $\dim_F\, L\oplus W'=8=\dim_F\Symm(\tau)$ and $L\subseteq \Symm(\tau)$, we obtain that 
$W'\not\subseteq\Symm(\tau)=\Symm(\s')\cap B$. As $W'\subseteq \Symm(\s')$, we conclude that $W'\not\subseteq B$.

Since $Q=Z\oplus jZ$ and $W'\subseteq A'=\C_A(Z)$, it follows that 
there exists $w_0\in W'$ such that $jw_0\neq w_0j$. 
Let
  $w=jw_0j^{-1}-w_0\in W'$. Then $w\neq0$ and
\begin{eqnarray*}
    wj=jw_0-w_0j=jw_0+\s(jw_0) & \in & \Symd(\s)\,.
\end{eqnarray*}
 Moreover, $wj\in W$ because $w\in W'\subseteq W$, $j\in \C_A(L)$ and $jw=-wj$.

We fix an element $z\in Z\setminus F$ with $z^2-z\in F$. 
Then $jzj^{-1}=1-z$.

For every $w'\in W'$
  we have
  \begin{equation}
    \label{eq:4}
    (w'wj+wjw')z=(1-z)(w'wj+wjw').
  \end{equation}
On the other hand, since $x^2\in K$ for every $x\in W$,
  it follows that
  \[
    w'wj+wjw'=(w'+wj)^2-{w'}^2-(wj)^2\in K,
  \]
  whereby
  \begin{equation}
    \label{eq:5}
    (w'wj+wjw')z=z(w'wj+wjw')
  \end{equation}
  because $K\subseteq A'=\C_A(Z)$.  
By comparing~\eqref{eq:4} and \eqref{eq:5}, we obtain for every $w'\in W'$ that $w'wj+wjw'=0$.
This proves that $wj$ lies in the orthogonal complement of $W'$ with respect to the quadratic form $q$. 

We set $a=q(w)$. As $w\in W'\setminus\{0\}$ and $q'=q|_{W'}$ is anisotropic, we have that $a\in\mg{F}$.
Since $q|_{W'}$ is similar to $\Pf_{\s'}$, we obtain that 
$\Pf_{\s'}\simeq a q'$.
Similarly, since $q$ is similar to $\Pf_\s$ and represents $a$, we obtain that 
$\Pf_\s\simeq aq$.

Set $b=j^2$. Then $b\in\mg{F}$ and $\Nrd_Q\simeq \la 1,-b\ra\otimes \N_{Z/F}$.
We further have
$$
    q(wj)=s\bigl((wj)^2\bigr) = s(-bw^2)=-bq(w)=-ab\,.
$$
Since $wj$ lies in the orthogonal complement of $W'$ with respect to $q$, it follows that 
$q'\perp\la -ab\ra$ is a subform of $q$.
Therefore $\Pf_{\s'}\perp\la -b\ra$ is a subform of $\Pf_\s$.
On the other hand, having $\Pf_{\s'}\simeq \la 1,-d\ra\otimes \N_{Z/F}$ and 
$\la 1,-b\ra\otimes \N_{Z/F}\simeq \Nrd_Q$, we also have that $\Pf_{\s'}\perp\la -b\ra$ is a subform of $\la 1,-d\ra\otimes \Nrd_Q$.
Hence the quadratic $3$-fold Pfister forms $\Pf_\s$ and $\la 1,-d\ra\otimes \Nrd_Q$ share a common $5$-dimensional subform.
In view of \cite[Lemma~23.1]{EKM} this readily yields that they are isometric.
\end{proof}

\begin{rem}
In view of \Cref{P:cap4unitary-invarel}, one can derive part~$(2)$ of \Cref{prop:examples} alternatively
  from the description of the Brauer class of the discriminant algebra
  of $(B,\tau)\otimes(Z,\can_{Z/F})$ in \cite[(10.33)]{BOI}.
\end{rem}

We round up by computing the discriminant Pfister form in some special cases of \Cref{prop:examples} where the algebra $B$ is split.

\begin{exs}
  Let $B=\End_F(V)$ for some $4$-dimensional $F$-vector space $V$.
  Let $\beta\colon V\times V\to F$ be a nondegenerate symmetric
  bilinear form over $F$, and let $\ad_\beta$ denote the adjoint
  involution on $\End_F(V)$, which is determined by   
$$\hspace{2cm}\beta(u,f(v))=\beta(\ad_\beta(f)(u),v)\quad\mbox{ for all }f\in\End_F(V), u,v\in V.$$
  Let $d\in \mg{F}$ be the determinant of $\beta$ (determined up to a
  square factor). Applying \Cref{prop:examples}, we obtain the
  following results:
\begin{enumerate}[$(1)$]
\item If $\car(F)\neq 2$, then $\Pf_{\ad_\beta}\simeq \la 1,-d\ra$.
\item 
For
$(A,\s)=(\End_F(V),\ad_\beta)\otimes (Z,\can_{Z/F})$, where $Z$ is a quadratic \'etale $F$-algebra, we obtain that 
$\Pf_\s\simeq \la 1,-d\ra\otimes \mathsf{N}_{Z/F}$.  
\item Let $Q$ be an $F$-quaternion algebra. For
$(A,\s)=(\End_F(V),\ad_\beta)\otimes (Q,\can_Q)$, we obtain that
that $\Pf_\s\simeq\la 1,-d\ra\otimes \Nrd_{Q}$. 
\end{enumerate}
\end{exs}

\bibliographystyle{plain}

\end{document}